\definecolor{linkcol}{rgb}{0, 0, 0.8}
\numberwithin{equation}{section}
\renewcommand{\theequation}{\arabic{section}-\arabic{equation}}
\theoremstyle{plain}
\newtheorem{theorem}{Theorem}[section]
\newtheorem{lemma}[theorem]{Lemma}
\newtheorem{proposition}[theorem]{Proposition}
\newtheorem{corollary}[theorem]{Corollary}
\theoremstyle{definition}
\newtheorem{definition}[theorem]{Definition}
\newtheorem{remark}[theorem]{Remark}
\newtheorem{nn}[theorem]{}
\newcommand{\la}{\leftarrow}
\newcommand{\lra}{\longrightarrow}
\newcommand{\hra}{\hookrightarrow}
\newcommand{\ratail}{\rightarrowtail}
\newcommand{\retrmanlabel}[4]{\ensuremath \xymatrix@1{#1\ar@<0.4ex>[r]^-{#3}&#2 \ar@<0.4ex>[l]^-{#4} }}
\newcommand{\retrlabel}[2]{\ensuremath \xymatrix@1{#1\ar@<0.4ex>[r]^-{r}&#2 \ar@<0.4ex>[l]^-{s} }}
\newcommand{\CC}{\mathcal{C}}
\newcommand{\DD}{\mathcal{D}}
\renewcommand{\SS}[1][\bullet]{w\mathcal{S}_{#1}}
\newcommand{\TT}[1][\bullet]{w\mathcal{T}_{#1}}
\newcommand{\TTc}{\mathcal{T}_{\bullet}}
\newcommand{\SSc}{\mathcal{S}_{\bullet}}
\newcommand{\Rfd}{\mathcal{R}^{\it fd}}
\newcommand{\simp}[1][B]{{\mathcal S}(#1)}
\newcommand{\Chfd}{{\mathcal C}h^{\it fd}}
\newcommand{\colim}[1][]{\ifmmode\ifinner {\operatorname{colim}_{#1}}\,\else \underset{#1}{\operatorname{colim}}\, \fi\fi}
\newcommand{\hocolim}[1][]{\ifmmode\ifinner {\operatorname{hocolim}_{#1}}\,\else \underset{#1}{\operatorname{hocolim}}\,\fi\fi}
\newcommand{\holim}[1][]{\ifmmode\ifinner {\operatorname{holim}_{#1}}\,\else \underset{#1}{\operatorname{holim}}\,\fi\fi}
\newcommand{\tQ}{\widetilde{Q}}
\newcommand{\wh}[1][{\mathbb R}]{{\rm Wh}^{#1}} 
\newcommand{\ta}{\tilde{a}}
\newcommand{\ttaus}{\ensuremath\tilde \tau^{s}}
\newcommand{\taus}{\ensuremath\tau^{s}}
\newcommand{\ts}{\ensuremath t^{s}}
\newcommand{\hofib}{\mathrm{hofib} }
\newcommand{\id}{{\ensuremath{\rm id}}}
\newcommand{\R}{\mathbb{R}}
\newcommand{\Q}{\mathbb{Q}}
\newcommand{\Z}{\mathbb{Z}}
\newcommand{\bast}{{\bm \ast}}
\newcommand{\oF}{{\overline F}}
\title{\bf Equivalence of higher torsion invariants}
\author[B. Badzioch]{Bernard Badzioch}
\address[]{Department of Mathematics, University at Buffalo, SUNY, Buffalo, NY}
\email{badzioch@buffalo.edu}
\author[W. Dorabia{\l}a]{Wojciech Dorabia{\l}a}
\address[]{Department of Mathematics, Penn State Altoona, Altoona, PA }
\email{wud2@psu.edu}
\author[J. R. Klein]{John R. Klein}
\address[]{Department of Mathematics, Wayne State University, Detroit, MI}
\email{klein@math.wayne.edu}
\author[B. Williams]{Bruce Williams}
\address[]{Department of Mathematics, University of Notre Dame, IN}
\email{williams.4@nd.edu}
\begin{document}

\begin{abstract}
We show that the smooth torsion of bundles of manifolds constructed by Dwyer, Weiss, and Williams
satisfies the axioms for higher torsion developed by Igusa. As a consequence we obtain that 
the smooth Dwyer-Weiss-Williams torsion is proportional to the higher torsion of Igusa and Klein. 
\end{abstract}

\date{\bf 04/30/2009}
\maketitle

\tableofcontents


\section{Introduction}
\label{INTRO SEC}

Higher torsion invariants are invariants of smooth fiber bundles of manifolds which 
generalize the classical Reidemeister torsion of topological spaces. Just as 
the Riedemeister torsion helps to  distinguish spaces which have the same homotopy 
type but differ in their geometric properties, the purpose of the higher torsion is to aid in
classification of smooth bundles up to fiberwise diffeomorphism, especially in the case when  
bundles are fiberwise homotopy equivalent and thus cannot be set apart  using homotopy 
type invariants. 

The idea that the Reidemeister torsion could be generalized to the setting of smooth bundles 
appeared in the work of Hatcher and Wagoner \cite{HW, Wag}. The task of constructing 
higher torsion, however,  proved challenging and required prior development of such areas 
as local index theory, the theory of parametrized Morse functions, and Waldhausen's algebraic 
$K$-theory of spaces. 

These advancements brought in recent years three independent and very distinct 
constructions of higher torsion.  In  \cite{Bismut-Lott} Bismut and Lott presented an analytic construction. 
The torsion invariant developed by  Igusa and Klein \cite{IgusaBook, KleinMorse} follows 
from a more geometric, Morse theoretical approach. The most recent of the three is the construction 
of smooth torsion described by Dwyer, Weiss, and Williams in \cite{DWW}. 
Its striking feature is that while it captures some information about the smooth structure of a bundle, 
it can be described entirely in terms of homotopy theory. 

These invariants have already proved to be effective tools for studying smooth bundles.
 For example, in \cite{IguICM} Igusa demonstrated that the Igusa-Klein torsion can be used 
 to distinguish  exotic disc bundles constructed by Hatcher,  while Goette \cite{Goette1, Goette2} 
 obtained results of a similar kind in the realm of the Bismut-Lott torsion. The central question 
 have become, however, how the three notions of higher torsion are related to one another.  
 The analytic torsion of Bismut-Lott and the higher torsion of Igusa-Klein are known to coincide 
 (up to a normalization constant) for several classes of bundles for which both of these invariants 
 are defined; see e.g. \cite{Bismut-Goette, Goette1, Goette2}. On the other hand, no results have been 
 known comparing them to the smooth torsion of Dwyer-Weiss-Williams.

In order to provide a general framework for settling the comparison problem Igusa developed in 
\cite{IgusaAx} an axiomatic approach to higher torsion. He showed that any cohomological higher 
torsion invariant which is defined on the class of unipotent bundles (\ref{UNIPOTENT DEF}) and which 
satisfies  the additivity (\ref{ADD AX DEF}) and transfer (\ref{TRANSFER AX DEF}) axioms must coincide 
with the Igusa-Klein torsion up to a couple of scalar constants. 
In \cite{BDW} Badzioch, Dorabia{\l}a and Williams showed that the Dwyer-Weiss-Williams torsion 
(which was originally defined in \cite{DWW} for acyclic bundles) 
can be extended to the class of all unipotent bundles and that 
it yields a cohomological invariant of such bundles. The main result of this paper 
(Theorem \ref{MAIN THM}) is that this cohomological invariant satisfies the axioms of Igusa. 
Combined with the observation that the Dwyer-Weiss-Williams torsion is an exotic invariant 
(\ref{EXOTIC DEF}) we obtain

\begin{theorem}[cf. Theorem \ref{MAIN2 THM}]
\label{MAIN INT THM}
For any $k>0$ the smooth cohomological Dwyer-Weiss-Williams torsion of unipotent bundles 
in degree $4k$ is proportional to the Igusa-Klein torsion in the same degree.
\end{theorem}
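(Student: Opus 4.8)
The plan is to deduce Theorem~\ref{MAIN INT THM} formally from the main technical result of the paper, together with Igusa's axiomatic classification of higher torsion and the exotic nature of the Dwyer--Weiss--Williams construction; the substantive work lies entirely in verifying the hypotheses of that classification, i.e.\ in Theorem~\ref{MAIN THM}.

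First I would record that, by \cite{BDW}, the smooth Dwyer--Weiss--Williams torsion extends to a cohomological invariant $\tau^{DWW}$ of all unipotent bundles (\ref{UNIPOTENT DEF}), assigning to $p\colon E \to B$ a class in $H^{4*}(B;\R)$, and that by Theorem~\ref{MAIN THM} this invariant satisfies the additivity axiom (\ref{ADD AX DEF}) and the transfer axiom (\ref{TRANSFER AX DEF}). Hence $\tau^{DWW}$ is a higher torsion invariant in the sense of Igusa. I would then invoke Igusa's structure theorem from \cite{IgusaAx}: for each $k>0$ the real vector space of higher torsion invariants in degree $4k$ is two-dimensional, spanned by the Igusa--Klein torsion $\tau^{IK}_{4k}$ and a second, non-exotic generator $\theta_{4k}$ which is a rational characteristic class of the fiberwise tangent bundle. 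Thus there are real numbers $a_k,b_k$ with
\[
\tau^{DWW}_{4k} \;=\; a_k\,\tau^{IK}_{4k} + b_k\,\theta_{4k}.
\]

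To eliminate the second term I would use the observation, recalled in the introduction and made precise in (\ref{EXOTIC DEF}), that $\tau^{DWW}$ is an \emph{exotic} invariant, together with the facts that $\tau^{IK}$ is exotic while $\theta_{4k}$ is not. Evaluating the displayed identity on a linear bundle (a fiberwise sphere or disk bundle of a vector bundle) on which $\theta_{4k}$ is nonzero but $\tau^{DWW}$ and $\tau^{IK}$ vanish forces $b_k=0$; equivalently, the exotic invariants in degree $4k$ constitute the line $\R\,\tau^{IK}_{4k}$, and $\tau^{DWW}_{4k}$ lies on it. Either way $\tau^{DWW}_{4k}=a_k\,\tau^{IK}_{4k}$, which is the asserted proportionality.

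The only genuine obstacle is Theorem~\ref{MAIN THM}. Checking additivity should be comparatively routine, following from the behaviour of the parametrized $A$-theory characteristic under gluing of bundles along a fiberwise codimension-zero submanifold and from the additivity theorem in Waldhausen $K$-theory. The hard part will be the transfer axiom: one must show that the homotopy-theoretically defined $\tau^{DWW}$ is compatible with the Becker--Gottlieb/Waldhausen fiberwise transfer associated to a smooth bundle $E'\to E$ with closed manifold fibers, which requires controlling the interaction of the $A$-theory assembly map, the linearization $A(X)\to K(\Z)$, and the smooth structure space with transfers and with external products of bundles.
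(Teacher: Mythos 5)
Your overall strategy coincides with the paper's: reduce everything to Theorem~\ref{MAIN THM} (that $t^{s}_{4k}$ is an exotic higher torsion invariant) and then invoke Igusa's classification. The only difference is that you re-derive the consequence of exoticness from Igusa's two-dimensionality theorem (\ref{IGUSA MAIN THM}) by writing $t^{s}_{4k}=a_k t^{IK}_{4k}+b_k\theta_{4k}$ and killing $b_k$, whereas the paper simply cites \cite[Thm 9.13]{IgusaAx} (Theorem~\ref{IGUSA EXOTIC THM}), which already packages ``exotic higher torsion invariant $\Rightarrow$ proportional to $t^{IK}_{4k}$.'' Your route is slightly longer and relies on a description of the complementary generator $\theta_{4k}$ and of how the exotic subspace sits inside the two-dimensional space that you would need to extract carefully from \cite{IgusaAx}; citing the theorem directly is cleaner and avoids the risk of misstating which combinations of even/odd parts are exotic.

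There is, however, one genuine omission: you never address whether the proportionality constant $a_k$ is nonzero. Theorem~\ref{MAIN INT THM} is stated ``cf.\ Theorem~\ref{MAIN2 THM},'' which asserts $0\neq\lambda_{4k}$, and Theorem~\ref{MAIN THM} explicitly requires that $t^{s}_{4k}$ be a \emph{non-trivial} exotic higher torsion invariant. Without nontriviality, the argument only shows that $t^{s}_{4k}$ lies on the line $\R\, t^{IK}_{4k}$, which is compatible with $t^{s}_{4k}\equiv 0$. The paper closes this gap in Section~\ref{NONTRIVIALITY SEC}: for each $k$ it produces a linear sphere bundle $p\colon E\to S^{4k}$ classified by a torsion-free element of $\ker\bigl(J_*\colon\pi_{4k}BSO\to\pi_{4k}BSG\bigr)$, lifts along $G/O\to\Omega\wh[\rm diff](\ast)$, and uses rational injectivity of $\pi_{4k}(G/O)\to\pi_{4k}\wh(\ast)$ (via \cite{Bokstedt} and \cite{Wal1}) to conclude $t^{s}_{4k}(p)\neq 0$. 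Your proposal should either include an argument of this kind or make explicit that it only proves the weaker statement with possibly zero constant. (A minor slip elsewhere: the linearization in this paper lands in $K(\R)$, not $K(\Z)$, which matters because the rational computation of $\wh(\ast)_{\Q}$ via Borel regulators uses $K(\R)$.)
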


Apart from its intrinsic interest, the fact that these two very different constructions yield the same information
about smooth bundles has several useful consequences. On the one hand, the Igusa-Klein torsion is 
more suitable for direct computations than the Dwyer-Weiss-Williams invariant. In fact, while Igusa-Klein 
torsion has been calculated for many classes of bundles, in Section \ref{NONTRIVIALITY SEC} of this paper 
we give the first, to our knowledge, examples of bundles for which the Dwyer-Weiss-Williams torsion 
does not vanish. By our comparison result, however, all computations using Igusa-Klein torsion apply
instantly to the smooth torsion of Dwyer-Weiss-Williams. 

On the other hand, the approach of Dwyer-Weiss-Williams seems to provide a better understanding of  the 
information carried by torsion of a bundle. Recall, for example, that in the classical setting 
the Reidemeister torsion is an invariant related to the Whitehead torsion 
of a homotopy equivalence of spaces. 
Results of \cite{DWW} imply that, analogously, one can define a smooth parametrized Whitehead torsion 
as an invariant associated to fiberwise homotopy equivalence $f$ of smooth bundles. 
This invariant vanishes when $f$ is homotopic to a diffeomorphism of bundles. 
Moreover, the opposite is also true: if the smooth parametrized Whitehead torsion of 
$f$ is trivial then $f$ (after an appropriate stabilization) is homotopic to a diffeomorphism.  
The smooth torsion of Dwyer-Weiss-Williams is an absolute invariant associated to 
the parametrized Whitehead torsion. Viewed from this perspective the Dwyer-Weiss-Williams torsion 
of a bundle $p$ can be seen as an obstruction to existence of a diffeomorphism between $p$ and 
a product bundle.
 
Another application of Theorem \ref{MAIN INT THM}  comes as 
a consequence of Theorem \ref{LH TRANSFER THM} of this paper. For unipotent bundles 
$p\colon E\to B$ and $q\colon D\to E$ the transfer axiom of Igusa (\ref{TRANSFER AX DEF}) 
gives a formula expressing torsion of the bundle $pq$ in terms of torsions of $p$ and $q$. 
This formula is known to hold for the Igusa-Klein torsion under the assumption that  $q$ 
is an oriented linear sphere bundle.  Our Theorem \ref{LH TRANSFER THM} shows that for 
the Dwyer-Weiss-Williams torsion the same formula holds in a more general setting, whenever 
the bundle $q$ satisfies the assumptions of the Leray-Hirsch theorem (we say then that $q$ 
is a Leray-Hirsch bundle, see Definition \ref{LH BUNDLE DEF}). Theorem \ref{MAIN INT THM} 
immediately implies that this is true for the Igusa-Klein torsion as well:

\begin{theorem}
\label{IK LH TRANSFER THM}
For $k>0$ let $t^{\it IK}_{4k}$ be the Igusa-Klein torsion invariant in the degree 4k. 
Let $p\colon E\to B$ be a unipotent bundle, and let $q\colon D\to E$ be a Leray-Hirsch bundle 
with fiber $F$. Then we have
$$t^{\it IK}_{4k}(pq)=\chi(F)t^{\it IK}(p)+ tr^{E}_{B}(t^{\it IK}_{4k}(q))$$
where $\chi(F)$ is the Euler characteristic of $F$ and $tr^{E}_{B}\colon H^{\ast}(E; \R)\to H^{\ast}(B; R)$
is the transfer homomorphism associated to $p$.
\end{theorem}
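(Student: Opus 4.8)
The plan is to deduce this formula from its Dwyer--Weiss--Williams analogue, Theorem \ref{LH TRANSFER THM}, by transporting that identity across the proportionality of the two torsions established in Theorem \ref{MAIN INT THM}. Write $t^{\it DWW}_{4k}$ for the smooth cohomological Dwyer--Weiss--Williams torsion in degree $4k$. Theorem \ref{MAIN INT THM} provides a nonzero real constant $\lambda_k$, the same for all unipotent bundles, such that $t^{\it DWW}_{4k}(r)=\lambda_k\,t^{\it IK}_{4k}(r)$ in $H^{4k}(Y;\R)$ for every unipotent bundle $r\colon X\to Y$.

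First I would observe that the hypotheses of the present statement are precisely those of Theorem \ref{LH TRANSFER THM}: $p$ is unipotent, $q$ is a Leray--Hirsch bundle, and the bundles $q$ and $pq$ are unipotent as well --- the latter being something that must in any case be verified in the course of proving Theorem \ref{LH TRANSFER THM}, since the torsions in that formula have to be defined. Applying Theorem \ref{LH TRANSFER THM} then gives
$$t^{\it DWW}_{4k}(pq)=\chi(F)\,t^{\it DWW}_{4k}(p)+tr^{E}_{B}\big(t^{\it DWW}_{4k}(q)\big)$$
in $H^{4k}(B;\R)$, with $tr^{E}_{B}\colon H^{\ast}(E;\R)\to H^{\ast}(B;\R)$ the transfer homomorphism associated to $p$, which is the same map appearing in the statement here. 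Substituting $t^{\it DWW}_{4k}=\lambda_k\,t^{\it IK}_{4k}$ for each of $p$, $q$ and $pq$, and using that $tr^{E}_{B}$ is a homomorphism of $\R$-vector spaces, hence $tr^{E}_{B}(\lambda_k x)=\lambda_k\,tr^{E}_{B}(x)$, the right-hand side becomes $\lambda_k\big(\chi(F)\,t^{\it IK}_{4k}(p)+tr^{E}_{B}(t^{\it IK}_{4k}(q))\big)$ and the left-hand side $\lambda_k\,t^{\it IK}_{4k}(pq)$. Dividing by $\lambda_k\neq 0$ yields the asserted identity.

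The mathematical substance is thus entirely carried by Theorems \ref{LH TRANSFER THM} and \ref{MAIN INT THM}; what remains is bookkeeping. The two points that genuinely need care are that the scalar relating the two torsions is a single universal nonzero constant, valid simultaneously for $p$, $q$ and $pq$ (exactly the proportionality assertion of Theorem \ref{MAIN INT THM}), and that $pq$ is unipotent so that $t^{\it IK}_{4k}(pq)$ is defined (subsumed in the hypotheses of Theorem \ref{LH TRANSFER THM}); one should also confirm that the transfer map in Theorem \ref{LH TRANSFER THM} is literally the classical transfer of $p$ occurring here, which it is.
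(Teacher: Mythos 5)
Your argument is exactly the one the paper intends: the statement appears in the introduction as an immediate consequence of Theorems \ref{LH TRANSFER THM} and \ref{MAIN INT THM} (equivalently \ref{MAIN2 THM}), and the derivation is precisely the substitution $t^{\it DWW}_{4k}=\lambda_{4k}\,t^{\it IK}_{4k}$ into the Dwyer--Weiss--Williams transfer formula followed by cancellation of the universal nonzero constant. Your ancillary remarks (that $q$ and $pq$ are unipotent, via Lemma \ref{LH EQUIV LEMMA}, and that the transfer map agrees) are also the correct checkpoints, so the proof is complete and matches the paper's route.
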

\noindent This property extends computability of the Igusa-Klein invariant.

\begin{nn}{\bf Organization of the paper.}
In Section \ref{AXIOMS SEC} we give a summary of Igusa's axiomatic description of higher 
torsion of smooth bundles. Since our work on the smooth torsion is heavily dependent on 
the language of Waldhausen categories we give a brief overview of the relevant notions and construction 
in Section \ref{WAL SEC}. Section \ref{DWW SEC} describes the main steps in the construction 
of the smooth torsion of Dwyer-Weiss-Williams for unipotent bundles. We also state there precisely 
the main results of this paper (Theo\-rems \ref{MAIN THM} and \ref{MAIN2 THM}). 
The focus of Section \ref{ADD TORSION SEC} is Theorem \ref{ADD THM} which describes 
the additive property of the smooth torsion. 
As we have already mentioned the transfer axiom of Igusa provides a relationship between 
the torsion of bundles $p$, $q$, and $pq$.
In Section \ref{SECONDARY TRANSFER SEC } we show that for the smooth torsion such relationship 
can be expressed using a ``secondary transfer'' map (Theorem \ref{SEC TRANSFER THM})
and then, in Section \ref{TRANSFER AXIOM SEC}, we verify that Igusa's axiom can be derived 
from our formula. Finally, in Section \ref{NONTRIVIALITY SEC} we demonstrate that the smooth torsion
is a non-trivial invariant of bundles. 

A part of the construction of the smooth torsion of unipotent bundles, and consequently a part of 
our arguments, involves the passage from chain complexes to their homology on 
the level of the algebraic $K$-theory. The material related to this topic is gathered in 
the \hyperref[APP SEC]{Appendix} which closes this paper. 
\end{nn}

\begin{nn}{\bf Terminology and notation.}\newline
\label{NOTATION NN}
$\bullet$ By a smooth bundle we will understand a smooth submersion $p\colon E\to B$ where 
$E$ and $B$ are smooth compact manifolds. By the Ehresmann fibration theorem \cite{Ehresmann}
$p$ is then a locally trivial fiber bundle with fiber $F=p^{-1}(b)$ for $b\in B$, and with the group of 
diffeomorphisms of $F$ as the structure group. \newline
$\bullet$ All chain complexes and homology groups in this paper are taken with coefficients in $\R$, 
the real numbers.  \newline
$\bullet$ Let $f_{0}, f_{1}\colon X\to Y$ be maps of topological spaces, and let 
$h_{0}, h_{1}\colon X\times I \to Y$ be two homotopies between $f_{0}$ and $f_{1}$. 
By a homotopy of homotopies we will understand in this situation a map 
$$H\colon X\times I\times I\to Y$$
such that 
$$\left\{ \begin{array}{lcll}
H|_{X\times \{i\}\times \{t\}}& = & f_{i} & \text{for } i=0,1,\text{and } t\in I \\
H|_{X\times I\times \{j\}}& =& h_{i} & \text{for } j=0,1\\
\end{array}\right. 
$$

\end{nn}

\section{Igusa's axioms}
\label{AXIOMS SEC}
In this section we briefly summarize the main results of the work of Igusa on axioms of higher torsion. 
We refer to \cite{IgusaAx} for details. 

\begin{definition}
\label{UNIPOTENT DEF}
Let $p\colon E\to B$ be a smooth bundle such  that $B$ is a connected manifold 
with a basepoint $b_{0}$. Let $F$ be the fiber of $p$ over $b_{0}$. The bundle $p$ is
unipotent if  $H_{\ast}(F)$ admits a filtration by graded $\pi_{1}B$-submodules 
$$0=V_{0}(F)\subseteq V_{1}(F)\subseteq \dots V_{k}(F)=
H_{\ast}(F)$$
such that $\pi_{1}B$ acts trivially on the quotients $V_{i}(F)/V_{i-1}(F)$.
\end{definition}

\begin{definition}
\label{CHAR CLASS DEF}
A characteristic class $t$ of unipotent bundles in degree $k$ is an assignment which associates to every  
unipotent bundle $p\colon E\to B$ a cohomology class $t(p)\in H^{k}(B)$ in such way that for any 
smooth map $f\colon B'\to B$ we have 
$$t(p')=f^{\ast}t(p)$$
where $p'\colon f^{\ast}E\to B'$ is the bundle induced from $p$.
\end{definition} 

\begin{nn}
\label{VERTICAL BOUNDARY NN}
If $p\colon E\to B$ is a smooth bundle whose fibers are manifolds with a boundary then restricting 
$p$ to the union of boundaries of fibers of $p$ we obtain a new smooth bundle 
$$\partial^{v}p\colon \partial^{v}E \to B$$ 
which we call the vertical boundary of $p$. By \cite[Prop. 2.1]{IgusaAx} if the bundle 
$p$ is unipotent then so is $\partial^{v}p$.
\end{nn}

\begin{definition}
\label{SPLIT BUNDLE DEF}
Let $p\colon E\to B$ be a smooth bundle with closed fibers. We will say that $p$ admits a splitting 
if there exist smooth subbundles of $p$
$$p_{i}\colon E_{i}\to B,\ \   i=0, 1, 2$$
such that $p_{0}$ is the vertical boundary of both $p_{1}$ and $p_{2}$, and the bundle 
$p$ is given by 
$$p=p_{1}\cup_{p_{0}}p_{2}\colon E_{1}\cup_{E_{0}}E_{2}\to B$$ 
The splitting of $p$ is unipotent if $p_{1}$ and $p_{2}$ are unipotent bundles.
\end{definition}

If $p$ admits a unipotent splitting  $p=p_{1}\cup_{p_{0}} p_{2}$ then by 
(\ref{VERTICAL BOUNDARY NN}) the bundle $p_{0}$ is unipotent.
Using this fact and the Mayer-Vietoris sequence for the homology of the fiber of $p$ one can 
see that in such case $p$ is a unipotent bundle as well. This motivates the following

\begin{definition}[\bf Additivity Axiom \hskip -2pt
\footnote{In \cite{IgusaAx} Igusa formulates axioms for higher torsion of unipotent bundles with 
{\it closed} fibers. In effect his additivity axiom comes in a slightly different (though equivalent) 
form to the one given here.}]
\label{ADD AX DEF}
A characteristic class of unipotent bundles $t$ satisfies the additivity axiom
if for any bundle $p$ with a unipotent splitting $p=p_{1}\cup_{p_{0}}p_{2}$
we have
$$t(p)=t(p_{1})+t(p_{2})-t(p_{0})$$
\end{definition}

\begin{definition}[\bf Transfer Axiom]
\label{TRANSFER AX DEF}
Let $p\colon E\to B$ be a unipotent bundle, and let $\xi$ be an $(n+1)$-dimensional oriented 
vector bundle over $E$ with the associated sphere bundle $q\colon S^{n}(\xi)\to E$. 
Let $F$ be the fiber of $q$. We will say that a characteristic 
class $t$ satisfies the transfer axiom if for any $p$, $q$ as above we have 
$$t(pq)=\chi(F)t(p)+tr^{E}_{B}(t(q))$$
where $\chi(F)\in \Z$ is the Euler characteristic of $F$ and $tr^{E}_{B}\colon H^{\ast}(E)\to H^{\ast}(B)$ is the 
Becker-Gottlieb transfer of $p$. 
\end{definition}

The transfer axiom relies on the fact that for bundles $p$, $q$ as in (\ref{TRANSFER AX DEF}) 
the bundle $pq\colon S^{n}(\xi)\to B$ is unipotent. This follows from \cite[Prop. 2.1]{IgusaAx}.

\begin{definition}
\label{HIGHER TOR DEF}
A  characteristic class of unipotent bundles  is a higher torsion invariant if
it satisfies the additivity and transfer axioms. 
\end{definition}

We can now state the main result of \cite{IgusaAx}.

\begin{theorem}
\label{IGUSA MAIN THM}
For any $k>0$ the collection of higher torsion invariants in degree $4k$ has the structure of  
a $2$-dimensional real vector space. 
\end{theorem}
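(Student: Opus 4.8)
The plan is to prove three things: that the higher torsion invariants in degree $4k$ form a real vector space, that this space has dimension at least $2$, and that it has dimension at most $2$. The first is immediate, since the additivity axiom (\ref{ADD AX DEF}) and the transfer axiom (\ref{TRANSFER AX DEF}) are homogeneous $\R$-linear relations among the values $t(-)$; hence the characteristic classes satisfying them form a linear subspace of the $\R$-vector space of all degree-$4k$ characteristic classes of unipotent bundles. All the content is in the dimension count.

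For the lower bound I would exhibit two linearly independent higher torsion invariants. The first is the Igusa--Klein torsion $\tau^{IK}_{4k}$ (constructed in \cite{IgusaBook} and shown to satisfy both axioms in \cite{IgusaAx}). For the second I would use the companion invariant constructed in \cite{IgusaAx}, which captures what one might call the linearized part of higher torsion, so that a general invariant is a combination of $\tau^{IK}_{4k}$ and this class. To see that the two are independent I would evaluate them on a carefully chosen pair of test families --- for instance a linear oriented sphere bundle over $S^{4k}$, on which the companion reduces to a nonzero multiple of a characteristic class of the underlying vector bundle, together with one of Igusa's known exotic smooth bundles with nonvanishing Igusa--Klein torsion --- and check that the resulting $2 \times 2$ matrix of values has nonzero determinant.

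The upper bound is the heart of the matter, and this is where the axioms are genuinely used. I would first show that a higher torsion invariant $t$ is determined by its restriction to linear oriented sphere bundles. The reduction uses parametrized Morse theory: by Igusa's framed function theorem, a unipotent bundle with closed fibers is, up to the equivalence relation generated by additivity, assembled by gluing along vertical boundaries (\ref{VERTICAL BOUNDARY NN}) from elementary handle bundles, which are disk bundles of oriented vector bundles; the transfer axiom then writes $t$ of such a disk bundle in terms of $t$ of the associated sphere bundle, the term $\chi(F)$ times the torsion of the base bundle, and a Becker--Gottlieb transfer, so one can induct on handle complexity and fiber dimension down to the sphere-bundle case. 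Passing to universal examples, $t$ becomes a class in the real cohomology of a classifying space for oriented linear sphere bundles, and the two axioms translate into the assertion that this class lies in the subspace spanned by the two stable classes present in degree $4k$ --- one coming from the Waldhausen / stable $h$-cobordism direction (whose rational homotopy in degree $4k$ is one-dimensional, by Waldhausen's algebraic $K$-theory of spaces together with Borel's computation of the rational $K$-theory of $\Z$) and one coming from the linear direction (a Miller--Morita--Mumford type class). Ruling out further invariants --- in particular decomposable products of lower Miller--Morita--Mumford classes, which fail the transfer recursion --- pins the dimension at exactly $2$.

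The main obstacle is the upper bound, and within it two deep ingredients. The first is making the Morse-theoretic handle decomposition precise enough that the additivity axiom genuinely reduces an arbitrary unipotent bundle to sphere bundles; this is the technical core of \cite{IgusaAx} and depends on the framed function theorem and the structure theory of parametrized families of Morse functions. The second is the homotopy-theoretic input identifying the space of admissible values on the universal sphere bundle as exactly two-dimensional, which rests on Waldhausen's algebraic $K$-theory of spaces, Igusa's stability theorem, and Borel's theorem on $K_\ast(\Z) \otimes \Q$. By comparison the remaining bookkeeping --- tracking Euler characteristics and signs through the transfer recursion and excluding spurious decomposable invariants --- is routine.
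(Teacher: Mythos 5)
The paper does not prove this theorem: it is stated as ``the main result of \cite{IgusaAx}'' and cited without proof. The only elaboration the present paper offers is one sentence afterward: that Igusa shows the space is spanned by the ``even'' and ``odd'' parts of the Igusa--Klein torsion $t^{\it IK}_{4k}$. So there is no proof in this paper for your attempt to be compared against; you were asked to reproduce what is, in effect, the entire content of a separate paper by Igusa. That is worth flagging before anything else, because the appropriate move here was to recognize that the statement is imported rather than internal, cite it, and stop.

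On its own terms, your sketch is a reasonable outline of the shape of Igusa's argument --- linearity of the axioms, a lower bound by exhibiting two independent invariants, an upper bound by reducing to (stabilized) linear sphere bundles via parametrized Morse theory / the framed function theorem, and then identifying the target as two-dimensional using Borel's computation together with stable $K$-theoretic input. But there are two substantive concerns. First, your choice of basis does not match what the paper records about Igusa's proof: you posit $t^{\it IK}_{4k}$ together with some separately constructed ``linearized companion,'' whereas the paper says the span is given by the even and odd \emph{parts} of $t^{\it IK}_{4k}$ itself. This is not merely cosmetic: the even/odd decomposition is what makes the lower bound transparent, because both components are shown directly to satisfy the axioms, and the independence is a structural fact about the decomposition rather than something one must verify by a $2\times 2$ determinant computation on ad hoc test bundles. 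Second, the upper bound --- which you correctly identify as the heart of the matter --- is stated at the level of strategy only; the reduction from arbitrary unipotent bundles to sphere bundles, the role of the transfer recursion in eliminating decomposable MMM classes, and the exact identification of the rational cohomology of the universal object in degree $4k$ all remain assertions. For a result of this scale it is not reasonable to call that a proof; it is a plausible table of contents for one. The honest conclusion is that your sketch is not verifiable as written, and in the context of the present paper it should simply have been a citation.
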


Igusa shows that the vector space of higher torsion invariants in the degree $4k$
is spanned by ``even'' and ``odd'' parts
of the Igusa-Klein torsion $t^{\it IK}_{4k}$. A closer relationship between $t^{\it IK}_{4k}$ and other 
higher torsion invariants can be obtained by restricting attention to the class of exotic invariants.

\begin{definition}[{\cite[p. 185]{IgusaAx}}] 
\label{EXOTIC DEF}
A characteristic class of unipotent bundles $t$ is exotic if for any unipotent bundle 
$p\colon E\to B$ and for any oriented linear disc bundle $q\colon D\to E$ we have 
$$t(pq)=t(p)$$
\end{definition}

\noindent Results of \cite{IgusaAx} imply that exotic higher torsion invariants form 
a $1$-dimen\-sio\-nal subspace in the vector space of higher torsion invariants. 
More precisely we have

\begin{theorem}[{\cite[Thm 9.13]{IgusaAx}}]
\label{IGUSA EXOTIC THM}
If $t$ is an exotic higher torsion invariant in degree $4k$ then there exists $\lambda\in \R$ such that 
for any unipotent bundle $p$ we have 
$$t(p)= \lambda\cdot t_{4k}^{\it IK}(p)$$
\end{theorem}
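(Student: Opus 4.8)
The plan is to deduce this from the classification in Theorem \ref{IGUSA MAIN THM}. Write $V$ for the $2$-dimensional real vector space of higher torsion invariants in degree $4k$; by that theorem it has a basis consisting of the ``even'' and ``odd'' parts of $t^{\it IK}_{4k}$, so $t^{\it IK}_{4k}$ is itself a nonzero element of $V$. Let $V_{ex}\subseteq V$ denote the invariants that are in addition exotic. The condition ``$t(pq)=t(p)$ for every oriented linear disc bundle $q$'' is linear in $t$, and $V$ itself is closed under linear combinations (the additivity and transfer identities are linear in $t$), so $V_{ex}$ is a linear subspace of $V$. The theorem is the assertion $V_{ex}=\R\cdot t^{\it IK}_{4k}$, and for this it suffices to prove (a) $t^{\it IK}_{4k}\in V_{ex}$, i.e. the Igusa--Klein torsion is exotic, and (b) $V_{ex}\ne V$: then $1\le\dim V_{ex}\le 1$, and a $1$-dimensional subspace containing the nonzero vector $t^{\it IK}_{4k}$ must be $\R\cdot t^{\it IK}_{4k}$, so every exotic higher torsion invariant $t$ in degree $4k$ equals $\lambda\cdot t^{\it IK}_{4k}$ for a unique $\lambda\in\R$.

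For (a) I would invoke the known fact that the Igusa--Klein torsion is exotic \cite{IgusaBook, IgusaAx}: the fiberwise framed-function data underlying $t^{\it IK}_{4k}$ is insensitive to attaching a fiberwise oriented disc bundle $q\colon D(\xi)\to E$, whose fibers are contractible and carry only standard tangential information, so $t^{\it IK}_{4k}(pq)=t^{\it IK}_{4k}(p)$ for every unipotent $p\colon E\to B$. This places $\R\cdot t^{\it IK}_{4k}$ inside $V_{ex}$.

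The substance of the argument is (b), which I would approach by computing the obstruction to exoticness directly from the axioms. Fix a unipotent bundle $p\colon E\to B$ and an oriented vector bundle $\xi$ over $E$ of \emph{even} rank $2m$. The fiberwise hemisphere decomposition $S^{2m}=D^{2m}\cup_{S^{2m-1}}D^{2m}$ presents $S^{2m}(\xi\oplus\epsilon^1)$ as a unipotent splitting $S^{2m}(\xi\oplus\epsilon^1)=D(\xi)\cup_{S(\xi)}D(\xi)$ (the two pieces are unipotent, being fiberwise homotopy equivalent to $p$, with common vertical boundary $S(\xi)$; unipotence of the sphere bundles over $B$ is \cite[Prop. 2.1]{IgusaAx}). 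Applying the additivity axiom over $B$, and the transfer axiom to the sphere bundles $S^{2m}(\xi\oplus\epsilon^1)\to E$ and $S(\xi)=S^{2m-1}(\xi)\to E$ (using $\chi(S^{2m})=2$, $\chi(S^{2m-1})=0$), one obtains for every $t\in V$
$$t\bigl(p\circ D(\xi)\bigr)-t(p)=\tfrac12\Bigl(tr^{E}_{B}\bigl(t(S^{2m}(\xi\oplus\epsilon^1))\bigr)+tr^{E}_{B}\bigl(t(S^{2m-1}(\xi))\bigr)\Bigr).$$
Hence $t$ is exotic if and only if the right-hand side vanishes for all $p$ and all such $\xi$; specializing to $p=\id_E$ (so $tr^{E}_{E}=\id$) this reduces to the identity $t(S^{2m}(\xi\oplus\epsilon^1))=-t(S^{2m-1}(\xi))$ among characteristic classes of oriented linear sphere bundles. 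I would then quote the explicit evaluation from \cite{IgusaAx} of higher torsion invariants on oriented linear sphere bundles: it expresses $t(S^{n}(\eta))$ through standard characteristic classes of $\eta$, with coefficients recording the ``even''/``odd'' coordinates of $t$, and it shows that the displayed identity holds for $t^{\it IK}_{4k}$ but fails for the complementary generator --- already on the disc bundle of a suitable even-rank bundle over $S^{4k}$ with nonzero top characteristic class. Thus that generator is not exotic, so $V_{ex}\subsetneq V$, which is (b).

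The main obstacle is precisely (b): the properness of $V_{ex}$ is not formal, but depends on Igusa's computation of higher torsion on linear sphere bundles together with the exact definition of the even/odd decomposition of $t^{\it IK}_{4k}$ --- effectively the full strength of \cite{IgusaAx}. With these inputs in hand, (a) and the bookkeeping behind the displayed identity in (b) are routine.
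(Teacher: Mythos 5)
This statement is quoted in the paper without proof: it is Igusa's Theorem 9.13 from \cite{IgusaAx}, which the present authors import as a black box (see the sentence immediately preceding it). There is therefore no proof by the authors of this paper to compare your argument against.

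As a standalone sketch, your reduction is organizationally sound and you are candid about where the real content lies. The formal part works: $V_{ex}$ is a linear subspace of the two-dimensional space $V$ given by Theorem \ref{IGUSA MAIN THM}, so (a) $t^{\it IK}_{4k}\in V_{ex}$ together with (b) $V_{ex}\neq V$ forces $V_{ex}=\R\cdot t^{\it IK}_{4k}$, which is the theorem. Your manipulation of the additivity and transfer axioms to derive the displayed identity is correct. One small slip: you assert that exoticness is \emph{equivalent} to the vanishing of the right-hand side for all $p$ and all even-rank $\xi$, but you only prove the forward implication, and the exoticness condition also quantifies over odd-rank oriented disc bundles, which your hemisphere computation does not touch. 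This does not hurt you, since for (b) you only need the contrapositive of the forward implication --- one nonvanishing instance already shows $t$ is not exotic. The substantive gap is that both (a) (exoticness of $t^{\it IK}_{4k}$) and the concrete failure of your identity for the complementary generator are precisely Igusa's evaluation of higher torsion on linear sphere and disc bundles, i.e.\ the hard computational content of \cite{IgusaAx}. So, as you acknowledge, the proposal is a correct framing that ultimately defers to essentially the full strength of the source you are citing, rather than an independent proof of it.
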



\section{Waldhausen $K$-theories}
\label{WAL SEC}

In our work on the smooth torsion of Dwyer-Weiss-Williams we will be using extensively 
the machinery of Waldhausen categories and their $K$-theories. In this section 
we review the basic notions and constructions related to this area. 
This material is standard and can be found in \cite{Wal}. 
Our goal here is to provide a concise summary of its aspects relevant to this paper  and 
to fix the notation.

\begin{nn}{\bf Waldhausen categories.} 
\label{WAL CATS NN}
By a Waldhausen category we will understand a category $\CC$ equipped with subcategories 
of cofibrations and weak equivalences satisfying the conditions of  \cite[Def. 1.2]{Wal}. 
We can turn $\CC$ into a simplicial category in two ways:
\begin{itemize}
\item[--] $\SS \CC$ is the simplicial category obtained by applying to $\CC$ the $\SSc$-construction 
of Waldhausen \cite[1.3]{Wal}; 
\item[--] $\TT \CC$ is obtained by applying to $\CC$ Thomason's variant of the 
$\SSc$-construction \cite[p. 343]{Wal}
\end{itemize}
The objects of the category  $\SS[n]\CC$ which appears in the $n$-th simplicial dimension of $\SS\CC$
are sequences of cofibrations in $\CC$:
$$c_{1} \ratail c_{2} \ratail \dots \ratail c_{n} $$
together with implicitly present quotient data. 
Morphisms in $\SS[n]\CC$ are commutative diagrams
\begin{equation*}
\label{S MORPHISM EQ}
\xymatrix{
c_{1}\  \ar@{>->}[r] \ar[d] & c_{2}\ \ar@{>->}[r] \ar[d] &\ \  \dots \ \  \ \ar@{>->}[r] & c_{n}\ar[d] \\
c'_{1}\ \ar@{>->}[r]  & c'_{2}\ \ar@{>->}[r]  &\ \  \dots \ \ \ \ar@{>->}[r]& c'_{n} \\
}
\end{equation*}
with the vertical arrows given by weak equivalences. We also set $\SS[0]\CC =\{\ast \}$. 
Notice  that $\SS[1]\CC$ is just the subcategory of weak equivalences of $\CC$. 

The category $\TT[n]\CC$ appearing in the $n$-simplicial dimension of $\TT\CC$ has as its objects
sequences of cofibrations 
$$c_{0}\ratail c_{1} \ratail c_{2} \ratail \dots \ratail c_{n} $$
Morphisms in $\TT[n]\CC$ are defined  similarly as in $\SS[n]\CC$, but the requirement that 
vertical morphisms are weak equivalences is replaced by the condition that for every  $i\geq j$ the map 
$$c'_{i}\cup_{c_{i}}c_{j}\to c'_{j}$$ 
is a weak equivalence. 
\end{nn}

\begin{nn}
\label{K-THEORY NN}
Consider the spaces $\Omega|\SS \CC|$ and $\Omega(|\TT\CC|/|\TT[0]\CC|)$. 
By \cite{Wal} these are weakly equivalent 
infinite loop spaces, representing different combinatorial models of 
the $K$-theory of the Waldhausen category $\CC$. 
\end{nn}

\begin{nn}
\label{ST MAP NN}
We have the standard maps 
$$|\SS[1]\CC|\times \Delta^{1}\to |\SS\CC| \ \ \text{and}\ \   |\TT[1]\CC| \times \Delta^{1}\to |\TT\CC|$$
By adjunction they induce canonical maps 
$$k\colon |\SS[1]\CC|\to \Omega|\SS\CC| \ \ \text{and}\ \  k\colon |\TT[1]\CC|\to \Omega(|\TT\CC|/|\TT[0]\CC|)$$
As a result given any small category $\DD$ and a functor $F\colon \DD\to \SS[1]\CC$ we obtain a map 
$$|\DD|\overset{|F|}{\lra} |\SS[1]\CC|\overset{k}{\lra} K(\R)$$
Analogously, any functor $F\colon \DD\to \TT[1]\CC$ induces a map 
$$|\DD|\to \Omega(|\TT\CC|/|\TT[0]\CC|)$$

Notice that using the map $k$ we can identify objects of $\CC$ with points in the space $\Omega |\SS\CC|$.
Similarly, cofibrations in $\CC$ can be identified with points in $ \Omega(|\TT\CC|/|\TT[0]\CC|)$.
\end{nn}

 In \cite{Wal} Waldhausen defines the notion of an exact functor of Waldhausen categories.
The main property of such functors is that they preserve the $\SSc$-construction. The following 
definition gives a slightly relaxed variant of exactness.  

\begin{definition}
\label{ALMOST EX DEF}
Let $\CC$, $\DD$ be Waldhausen categories. A functor $F\colon \CC\to \DD$ is almost exact
if it preserves cofibrations and weak equivalences and if for any diagram in $\CC$ of the form
$$c' \la c \ratail c''$$
the map 
$$F(c')\cup_{F(c)} F(c'') \to F(c' \cup_{c} c'')$$ 
is a weak equivalence in $\DD$.
\end{definition}

\begin{nn} 
\label{ALMOST EX NN}
An almost exact functor $F$ induces a functor of simplicial categories  
$F_{\bullet}\colon \TT\CC\to \SS\DD$ where $F_{n}\colon \TT[n]\CC\to \SS[n]\DD$
is given by 
\begin{multline*}
F_{n}(c_{0}\ratail c_{1} \ratail \dots \ratail c_{n})=  
(F(c_{1})/F(c_{0})\ratail \dots \ratail F(c_{n})/F(c_{0}))
\end{multline*}
Here  $c_{i}/c_{0}:=\colim(\ast \la c_{o}\ratail c_{i})$ and $\ast\in \CC$ is the terminal object. 
As a consequence $F$ defines a map 
$$\Omega(|\TT\CC|/|\TT[0]\CC|) \to \Omega|\SS\DD|$$
If $F$ is the identity functor on $\CC$ then this map gives the weak equivalence of (\ref{K-THEORY NN}). 
\end{nn}

\begin{nn}{\bf Waldhausen's pre-additivity theorem.}
\label{WAL ADD NN}
If $c, c'$ are objects in $\CC$ then the coproduct $c\sqcup c'$ represents the sum 
$c+c'$ in the $H$-space structure on $\Omega|\SS\CC|$. Similarly, taking coproducts of 
cofibrations coincides with addition in $\Omega(|\TT\CC|/|\TT[0]\CC|)$.
Waldhausen's additivity theorem \cite[Proposition 1.3.2]{Wal} relates the $H$-space structure 
on the $K$-theory of $\CC$ with  the cofibration sequences in $\CC$. 
In this paper we will use a simplified, combinatorial formulation of this theorem 
which can be described as follows. 

For a Waldhausen category $\CC$ consider the evaluation functors
$$Ev_{i}\colon \SS[2]\CC\to \SS[1]\CC, \ \ i=1,2$$
given by  $Ev_{i}(c_{1}\ratail c_{2}):= c_{i}$. Also, let $Ev_{12}\colon \SS[2]\CC\to \SS[1]\CC$ 
be defined by $Ev_{12}(c_{1}\ratail c_{2}):= c_{2}/c_{1}$. Passing to the nerves of categories
we obtain maps 
$$|Ev_{i}|\colon |\SS[2]\CC|\to |\SS[1]\CC|, \ \ i=1, 2, 12$$
We have
\begin{theorem}[{\cite[1.3.3]{Wal}}]
\label{WAL ADD THM}
Let $k$ is the map described in (\ref{ST MAP NN}). There exists a homotopy 
$$\mho\colon |\SS[2]\CC|\times I \to \Omega|\SS\CC|$$
between the maps $k\circ |Ev_{2}|$ and $k\circ |Ev_{1}|+ k\circ |Ev_{12}|$. 
\end{theorem}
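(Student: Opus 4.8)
The statement is the ``combinatorial'' form of Waldhausen's additivity theorem \cite[1.3.3, 1.3.2]{Wal}, and the plan is to produce $\mho$ directly from the geometry of the realization $|\SS\CC|$. Recall that $|\SS\CC|$ is the realization of the simplicial space $[n]\mapsto|\SS[n]\CC|$, so for every $n$ there is a tautological map $\Phi_n\colon|\SS[n]\CC|\times\Delta^n\to|\SS\CC|$; for $n=1$ this is the ``standard map'' of (\ref{ST MAP NN}) whose adjoint defines $k$, which is legitimate precisely because $\SS[0]\CC=\{\ast\}$ forces $\Phi_1$ to send $|\SS[1]\CC|\times\partial\Delta^1$ to the basepoint. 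Two elementary properties of realizations of simplicial spaces enter: the restriction of $\Phi_n$ to the $i$-th geometric face $\Delta^{n-1}\hookrightarrow\Delta^n$ is the composite of $|d_i|\times\id$ with $\Phi_{n-1}$, and the restriction of $\Phi_n$ to any vertex of $\Delta^n$ factors through $|\SS[0]\CC|=\{\ast\}$ and so lands in the basepoint.

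I would then specialize to $n=2$. With Waldhausen's indexing the face operators of $\SS\CC$ act on an object $c_1\ratail c_2$ of $\SS[2]\CC$ by $d_2=Ev_1$, $d_1=Ev_2$ and $d_0=Ev_{12}$, so for a point $x\in|\SS[2]\CC|$ the three edges of the singular $2$-simplex $\Phi_2(\{x\}\times\Delta^2)$ are exactly the arcs assigned by $\Phi_1$ to $Ev_1(x)$ (the edge $v_0v_1$), to $Ev_{12}(x)$ (the edge $v_1v_2$) and to $Ev_2(x)$ (the ``long'' edge $v_0v_2$), and all three vertices go to the basepoint. Fixing once and for all a homotopy $\Delta^1\times I\to\Delta^2$, rel $\partial\Delta^1$, from the edge $v_0v_2$ to the concatenated path $v_0\to v_1\to v_2$ (available since $\Delta^2$ is convex), composing it with $\Phi_2$, collapsing $|\SS[2]\CC|\times\partial\Delta^1$ to the basepoint and passing to the adjoint, I obtain a homotopy $|\SS[2]\CC|\times I\to\Omega|\SS\CC|$ that starts at $k\circ|Ev_2|$ and ends at the pointwise loop concatenation of $k\circ|Ev_1|$ and $k\circ|Ev_{12}|$.

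Finally I would splice on the standard homotopy identifying the pointwise loop concatenation of two maps into $\Omega|\SS\CC|$ with their sum under the $H$-space structure of (\ref{WAL ADD NN}); this yields the asserted $\mho$. The homotopy-theoretic content is entirely classical --- a filled $2$-simplex witnesses that one loop is the sum of the other two --- so I expect the only point requiring genuine care to be the bookkeeping: matching Waldhausen's indexing of $d_i\colon\SS[2]\CC\to\SS[1]\CC$ against $Ev_1$, $Ev_2$, $Ev_{12}$, and keeping the orientations of the $\Delta^1$- and $\Delta^2$-coordinates coherent through the successive adjunctions, so that the $2$-simplex is filled in the correct direction.
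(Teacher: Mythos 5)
The paper does not actually prove this statement --- it is cited directly from Waldhausen [Wal, 1.3.3] and used as a black box --- so there is no internal argument to compare against. Your blind proof is correct and is exactly the standard ``easy direction'' argument that Waldhausen himself gives: the tautological map $\Phi_2\colon |\SS[2]\CC|\times\Delta^2\to|\SS\CC|$ produces, for each $x$, a filled $2$-simplex whose three edges (after the identification $\Phi_1$ adjoint to $k$, legitimate since $\SS[0]\CC=\{\ast\}$ kills $\partial\Delta^1$) are precisely the loops $k(|Ev_1|(x))$, $k(|Ev_{12}|(x))$, and $k(|Ev_2|(x))$, with the face-map bookkeeping $d_2=Ev_1$, $d_0=Ev_{12}$, $d_1=Ev_2$ correctly matched to Waldhausen's indexing. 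Reparametrizing the long edge to the broken path via a convexity homotopy rel $\partial\Delta^1$, collapsing to get a family of loops, and finally splicing in the Eckmann--Hilton identification of loop concatenation with the coproduct-induced $H$-space sum (valid since $|\SS\CC|$ is connected and the two structures interchange) gives $\mho$. The one thing worth stressing, and which you correctly flag, is that this establishes only the \emph{existence of the homotopy} (hence ``pre-additivity'' in the paper's terminology), not the deeper equivalence $\SS[2]\CC\simeq\SS[1]\CC\times\SS[1]\CC$; the argument is genuinely soft and never invokes the hard half of the additivity theorem.
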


Theorem \ref{WAL ADD THM} can be equivalently formulated using the $\TTc$-construction. 
In this case the functors $Ev_{i}\colon \TT[2]\CC\to \TT[1]\CC$ are defined by
$$
Ev_{i}(c_{0}\ratail c_{1}\ratail c_{2}) :=
\begin{cases}
c_{0} \ratail c_{i} & i=1,2 \\
c_{1} \ratail c_{2} & i=12
\end{cases}
$$
We will call Theorem \ref{WAL ADD THM} Waldhausen's pre-additivity theorem. 
 
\end{nn}

\begin{nn}
In this paper we will typically use Theorem \ref{WAL ADD THM} in the following way. 
Assume that for a small category $\DD$ and a Waldhausen category $\CC$ we have a functor 
$F\colon \DD\to \SS[2]\CC$ (or $F\colon \DD\to \TT[2]\CC$). By (\ref{ST MAP NN}) 
for $i=1,2,12$ the compositions $Ev_{i}\circ F$ induce maps $f_{i}\colon |\DD|\to \Omega|\SS\CC|$ 
(or respectively $f_{i}\colon |\DD|\to \Omega(|\TT\CC|/|\TT[0]\CC|)$). Then $\mho$ defines 
a preferred homotopy 
$$f_{2} \simeq f_{1}+f_{12}$$
\end{nn}


\section{The smooth torsion of Dwyer-Weiss-Williams}
\label{DWW SEC}

Below we review the construction of the Dwyer-Weiss-Williams smooth torsion 
for unipotent bundles. Our approach is the same as that of  \cite{BDW}, although the
notation differs in some  places.  

Since the constructions described in this section are rather technical, it may be useful
to keep in mind the following rough idea which motivates them.  
For any fibration $p\colon E\to B$ with a finitely dominated fiber $F$ the action of $\pi_1 B$ 
on $H_{\ast}(F, \R)$ yields a map 
$$c_p:  B  \to  K(\R)$$
where $K(\R)$ is the infinite loop space of the algebraic $K$-theory of $\R$. 
The real cohomology of $K(\R)$ contains the Borel regulator classes. 
Pulling back these classes along $c_{p}$ we obtain characteristic classes  of
$p$, which take values in the cohomology groups $H^{4k+1}(B, \R)$. 
These classes are primary invariants of $p$.

If the action of $\pi_1 B$ on $H_{\ast}(F, \R)$ is unipotent (\ref{UNIPOTENT DEF}), 
then we get a preferred homotopy $\omega_{p}$ from $c_p$ to a constant map. 
This yields a  trivialization of our primary characteristic classes.

Also, if  $p$ is a smooth bundle, then $c_p$ has a factorization $p^!$ through 
a space homotopy equivalent to $\Omega^\infty\Sigma^\infty(S^0)$ which is rationally 
a discrete space.  Therefore the smooth structure  on $p$ determines another trivialization 
of the primary characteristic classes.

As a consequence, for $p$ which is both smooth and unipotent we have two trivializations of our
primary invariants. Taken together they  yield secondary characteristic classes 
$t^{s}_{4k}(p)\in H^{4k}(B, \R)$ which are the Dwyer-Weiss-Williams smooth torsion classes.

\begin{nn}{\bf Transfer.}
\label{TR LIN NN}
Let $p\colon E\to B$ be a smooth bundle. We will denote by $\simp$ the category of smooth 
singular simplices of $B$ \cite[4.1]{BDW} and by $|\simp|$ the  geometric realization of 
the nerve of $\simp$. We have a weak equivalence $B\simeq |\simp|$.
As in \cite[Sec. 3]{BDW}  by $\tQ(E_{+})$ we will understand 
the space obtained by applying the $\TTc$-construction (\ref{WAL CATS NN}) 
to the category of partitions of $E$. We have a weak equivalence
$$\tQ(E_{+})\simeq \Omega^{\infty}\Sigma^{\infty}E_{+}$$

Following \cite[Sec. 4]{BDW} by $p^{!}\colon |\simp|\to \tQ(E_{+})$ 
we will denote the Becker-Gottlieb transfer of the 
bundle $p$ and by $\tQ(p^{!})\colon \tQ(B_{+})\to \tQ(E_{+})$ its extension to $\tQ(B_{+})$. 

Next, let $\Rfd(E)$ be the category of homotopy finitely dominated retractive spaces over $E$ 
with maps of retractive spaces as morphisms. It is a Waldhausen category with cofibrations given 
by Serre cofibrations and weak equivalences defined as weak homotopy equivalences. Denote
$$A(E):=\Omega(|\TT \Rfd(E)|/|\TT[0]\Rfd(E)|)$$
This is the Waldhausen $A$-theory of the space $E$. We have the assembly map 
$\ta_{E}\colon \tQ(E)\to A(E)$ \cite[Sec. 3]{BDW}.

Let $\sigma\colon \Delta^{k}\to B$ be a smooth singular simplex and let 
$$\sigma^{\ast}E:=\lim (\Delta^{k}\overset{\sigma}{\lra} B \overset{p}{\longleftarrow} E)$$ 
The space $\sigma^{\ast}E\sqcup E$ is in the obvious way a retractive space over $E$. 
Consider the functor 
\begin{equation}
\label{FpA EQ}
F_{p^{A}}\colon \simp \to \TT[1]\Rfd(E)
\end{equation}
which assigns to $\sigma\in \simp$ the cofibration $F_{p^{A}}(\sigma):=(E\ratail E\sqcup \sigma^{\ast}E)$. 
By (\ref{ST MAP NN}) the functor $F_{p^{A}}$ induces a map $p^{A}\colon |\simp|\to A(E)$. 
The combinatorial constructions of the maps $p^{!}$ and $\ta_{E}$ in \cite{BDW} 
imply  that the following diagram commutes: 
\begin{equation}
\label{pA EQ}
\xymatrix{
& \tQ(E_{+})\ar[d]^{\ta_{E}} \\
|\simp|\ar[ru]^{p^{!}}\ar[r]_{p^{A}} & A(E) \\
}
\end{equation}
\end{nn}

\begin{remark} We note that commutativity of the diagram (\ref{pA EQ}) depends in an essential 
way on the smooth structure of the bundle $p$. Indeed, while all maps appearing in this diagram 
exist for any fibration $p\colon E\to B$  with finitely dominated fibers, Dwyer has given examples
of fibrations where the map $\ta_{E}\circ p^{!}$ is not homotopic to $p^{A}$ 
(see proof of Theorem F in \cite{KW}).
\end{remark}

\begin{nn}{\bf Linearization.}
\label{LIN NN}
By $\Chfd(\R)$ we will denote the category of homotopy finitely dominated chain complexes of 
$\R$-vector spaces. This is a Waldhausen category with degreewise monomorphisms as cofibrations 
and quasi-isomorphisms as weak equivalences. Applying the $\SSc$-construction 
(\ref{WAL CATS NN}) we obtain the space 
$$K(\R):=\Omega|\SS\Chfd(\R)|$$
which has the homotopy type of  the infinite loop space underlying the algebraic $K$-theory 
spectrum of $\R$.

Consider the linearization functor 
\begin{equation}
\label{LAMBDA FUNCTOR EQ}
\Lambda^{A}_{E}\colon \Rfd(E)\to \Chfd(\R)
\end{equation}
which assigns to a retractive space $X\in \Rfd(E)$ the singular chain complex $C_{\ast}(X)$.
This functor is almost exact (\ref{ALMOST EX DEF}) so it induces a map 
$$\lambda^{A}_{E}\colon A(E)\to K(\R)$$ 
The composition
$$\lambda_{E}:=\lambda^{A}_{E}\circ \ta_{E}\colon \tQ(E_{+})\to K(\R)$$
is the linearization map of $E$.

Next, let 
$$C_{p}\colon \simp \to \SS[1]\Chfd(\R)$$ 
be the functor given by $C_{p}(\sigma)=C_{\ast}(\sigma^{\ast}E)$. 
It induces a map $c_{p}\colon |\simp|\to K(\R)$. 
Notice that we have a canonical isomorphism of chain complexes 
$$C_{p}(\sigma)\cong \Lambda^{A}_{E, 1}\circ F_{p^{A}}(\sigma)$$
where 
$$\Lambda^{A}_{E, 1}\colon \TT[1]\Rfd(E)\to \SS[1]\Chfd(\R)$$ 
is the functor induced by 
$\Lambda^{A}_{E}$ (see \ref{ALMOST EX NN}).
This shows that the following diagram commutes up to a preferred homotopy:
$$
\xymatrix{
& A(E) \ar[d]^{\lambda^{A}_{E}} \\
|\simp| \ar[ur]^{p^{A}} \ar[r]_{c_{p}} & K(\R) \\
}
$$
Combining this with the diagram (\ref{pA EQ}) we obtain a diagram
$$
\xymatrix{
& \tQ(E_{+})\ar[d]^{\ta_{E}} \ar@(r, r)[dd]^{\lambda_{E}} \\
& A(E) \ar[d]^{\lambda^{A}_{E}} \\
|\simp|\ar[r]_{c_{p}} \ar[ru]^{p^{A}}\ar@(u, l)[ruu]^{p^{!}} & K(\R) \\
}
$$
which is homotopy commutative (via preferred homotopies). 
\end{nn}

\begin{nn}{\bf Smooth torsion.}
In the next definition we use the fact (\ref{ST MAP NN}) that  chain complexes 
in $\Chfd(\R)$ can be identified with points in the space $K(\R)$.

\begin{definition}
\label{WH DEF}
For a chain complex $C\in \Chfd(\R)$ the Whitehead space $\wh(E)_{C}$ is the homotopy fiber 
$$\wh(E)_{C}:=\hofib(\lambda_{E}\colon \tQ(E_{+})\to K(\R))_{C}$$
\end{definition}
\noindent We will denote by $\wh(E)$ the space $\wh(E)_{0}$ 
where $0\in \Chfd(\R)$ is the zero chain complex.
Since $\lambda_{E}$ is a map of infinite loop spaces $\wh(E)$ has a natural infinite loop space structure. 

Assume now that $p\colon E\to B$ is a unipotent bundle (\ref{UNIPOTENT DEF}) with a basepoint 
$b_{0}\in B$ and let $F$ be the fiber of $p$ over $b_{0}$. Consider the graded vector space $H_{\ast}(F)$ 
as a chain complex with trivial differentials. 
By \cite[Thm. 6.7]{BDW} we have a preferred homotopy 
$$\omega_{p}\colon |\simp|\times I \to K(\R)$$
such that $\omega_{p}|_{|\simp|\times \{0\}}=c_{p}$ and $\omega_{p}|_{|\simp|\times \{1\}}$ is the constant 
map $\bast_{H_{\ast}(F)}$, which sends $|\simp|$ to the point $H_{\ast}(F)\in K(\R)$. 
We will call $\omega_{p}$ the algebraic contraction of $p$.

\begin{definition}[{\cite[6.9, 6.10]{BDW}}]
\label{STOR DEF}
The unreduced smooth torsion of a unipotent bundle $p\colon E\to B$ is the map 
$$\ttaus(p)\colon |\simp|\to \wh(E)_{H_{\ast}(F)}$$
which is the lift of the Becker-Gottlieb transfer $p^{!}$ determined by the algebraic contraction $\omega_{p}$.
The (reduced) smooth torsion of $p$ is the map 
$$\taus(p)\colon |\simp|\to \wh(E)$$
obtained by shifting $\ttaus(p)$ to $\wh(E)$.
\end{definition}

\noindent More precisely, consider the map ${\bar p}^{!}\colon |\simp| \to \tQ(E_{+})$
obtained by subtracting from $p^{!}$ the constant map which sends $|\simp|$ 
into the point $p^{!}(b_{0})\in \tQ(E_{+})$. Also, 
let ${\bar \omega}_{p}\colon |\simp|\times I\to K(\R)$ be the homotopy which for every $t\in I$ is given by 
the substracing from the map  $\omega_{p}|_{|\simp|\times \{t \}}$ the constant map sending $|\simp|$ to 
$\omega_{p}(b_{0}, t)$. Then ${\bar\omega}_{p}$ is a homotopy between  $\lambda_{E}\circ\bar{p}^{!}$
and the constant map sending $|\simp|$ to $0\in K(\R)$. We will call ${\bar\omega}_{p}$ the reduced 
algebraic contraction of $p$. The reduced torsion $\tau^{s}(p)$ is the lift 
of ${\bar p}^{!}$ determined by ${\bar\omega}_{p}$.
\end{nn}

\begin{remark}
\label{REDUCTION REM} 
The following observation will be useful later on. Assume that for a map
$$f\colon |\simp|\to \tQ(E_{+})$$ 
and a chain complex $C\in \Chfd(\R)$
we have a homotopy 
$$\omega\colon |\simp|\times I\to K(\R)$$ 
between $\lambda_{E} f$ and the constant map $\bast_{C}$. The pair $(f, \omega)$ 
defines a map $\varphi\colon |\simp|\to \wh(E)_{C}$. Reducing $f$ and $\omega$ 
the same way which we used above to obtain $\bar p^{!}$ and $\bar \omega_{p}$ we get 
a pair $(\bar f, \bar \omega)$ which determines a map $\bar \varphi\colon |\simp|\to \wh(E)$.
We will call $\bar\varphi$ the reduction of $\varphi$.

Let $\varphi'\colon |\simp|\to \wh(E)_{C'}$ be another map defined by a pair 
$(f', \omega')$ and let $\bar \varphi'$ be the reduction of $\varphi'$.  
Notice that $\bar\varphi$ and $\bar\varphi'$ are homotopic maps if the following
conditions hold:
\begin{itemize}
\item[-] there exists a homotopy $h\colon |\simp|\times I\to \tQ(E_{+})$ between $f$ and $f'$;
\item[-] there exists a path $\gamma$ in $K(\R)$ joining the points $C$ and $C'$;
\item[-] there exists a homotopy of homotopies (\ref{NOTATION NN})
between the concatenation of $\lambda_{E} h$ with $\omega'$, and the concatenation 
of $\omega$ with $\gamma$ (we interpret here $\gamma$ as
a homotopy $\gamma\colon |\simp|\times I\to K(\R)$ via constant maps):
$$
\xymatrix{
\lambda_{E}f \ar[r]^{\lambda_{E} h}\ar[d]_{\omega} & \lambda_{E}f'\ar[d]^{\omega'} \\
\bast_{C}\ar[r]_{\gamma} & \bast_{C'} \\
}
$$
In the above diagram vertices represent maps $|\simp|\to K(\R)$ and edges stand for homotopies
of such maps.  
\end{itemize}

\end{remark}

\begin{nn}{\bf The algebraic contraction.}
\label{ALG CONTR NN}
Since our arguments later in this paper will rely on the specifics of the construction of the algebraic contraction
$\omega_{p}$ we will now review the main steps of this construction. 

As before, for a unipotent bundle $p\colon E\to B$ let $b_{0}$ be the basepoint 
of $B$ and let $F=p^{-1}(b_{0})$. 
The homotopy $\omega_{p}$ is obtained by concatenating the three following homotopies:

\begin{list}{}{\itemsep 3mm \leftmargin 0mm \itemindent 2mm}
\item[$\bullet$ \bf The homotopy $\bm{\omega^{(1)}_{p}}$.] 
Let 
$$H_{p}\colon \simp\to \SS[1]\Chfd(\R)$$
denote the functor which assigns to a singular simplex $\sigma\in \simp$ the homology chain complex 
$H_{\ast}(\sigma^{\ast}E)$. It induces a map 
$$h_{p}\colon |\simp|\to K(\R)$$
The homotopy $\omega^{(1)}_{p}$ is the homotopy between the maps $c_{p}$ and $h_{p}$ described in 
Remark \ref{OMEGA1 REM}.  

\item[ $\bullet$ \bf  The homotopy $\bm{\omega^{(2)}_{p}}$.] 
For $\sigma\in\simp$ let $\sigma(0)\in B$ denote the zeroth vertex
of $\sigma$ and let $F_{\sigma(0)}=p^{-1}(\sigma(0))$. We have a functor
$$H^{0}_{p}\colon \simp \to \SS[1]\Chfd(\R)$$
such that $H^{0}_{p}(\sigma)=H_{\ast}(F_{\sigma(0)})$. 
Let  $h^{0}_{p}\colon |\simp|\to K(\R)$ be the map induced by the functor $H^{0}_{p}$. 
The isomorphisms of chain complexes 
$$H_{\ast}(\sigma^{\ast}E)\overset{\cong}{\lra} H_{\ast}(F_{\sigma(0)})$$
define a natural transformations of the functors $H_{p}$ and $H^{0}_{p}$, and thus 
induce a homotopy $\omega^{(2)}_{p}$ between the maps $h_{p}$ and $h^{0}_{p}$.

\item[$\bullet$ \bf The homotopy $\bm{\omega^{(3)}_{p}}$.] 
As before let 
$${\bast}_{H_{\ast}(F)}\colon |\simp|\to K(\R)$$ 
denote the constant map sending the space $|\simp|$ to the point of $K(\R)$ 
represented by the chain complex $H_{\ast}(F)$. If $p$ is a bundle such that the group 
$\pi_{1}B$ acts trivially on $H_{\ast}(F)$ then for every $\sigma\in \simp$ we have 
a canonical isomorphism 
$$H_{\ast}(F_{\sigma(0)})\overset{\cong}{\lra} H_{\ast}(F)$$
These isomorphisms define a homotopy $\omega^{(3)}_{p}$ between
 $h^{0}_{p}$ and ${\bast}_{H_{\ast}(F)}$.

This construction can be generalized to the case when $p$ is an arbitrary unipotent bundle.
 We have then canonical isomorphisms of  quotients of  the filtrations  of $H_{\ast}(F_{\sigma(0)})$ 
 and $H_{\ast}(F)$ given by Definition \ref{UNIPOTENT DEF}.
The homotopy $\omega^{(3)}_{p}$ is obtained using this 
fact and Waldhausen's pre-additivity theorem (see \cite[Proof of Thm 6.7]{BDW}). We note here that
that the homotopy class of $\omega^{(3)}_{p}$ does not depend on the choice of the filtration
of $H_{\ast}(F)$.

\end{list}
\end{nn}

\begin{nn}{\bf Smooth cohomological torsion.}
\label{COH STOR NN}
Recall (Sec. \ref{AXIOMS SEC}) that in the axiomatic setting of Igusa higher torsion is defined as an invariant 
taking values in the cohomology groups of the base of the bundle. As a consequence in order to verify that 
Igusa's axioms hold for the smooth torsion one needs first to reduce $\taus$ 
to a cohomological invariant. This is accomplished as follows (cf. \cite[Sec. 7]{BDW}). 
Let $p\colon E\to B$ be a unipotent bundle and let $\iota_{E}\colon \tQ(E_{+})\to \tQ(S^{0})$ 
be the augmentation map. Consider the diagram
\begin{equation}
\label{COH DIAG EQ}
\xymatrix{
  & \wh(E)\ar[d]  \ar@{-->}[r]^{{\bar\iota}_{E}}& \wh(\ast)\ar[d]\\
|\simp|\ar[ru]^{\taus(p)} \ar[r]^{p^{!}} & \tQ(E_{+})\ar[d]_{\lambda_{E}} \ar[r]^{\iota_{E}} & \tQ(S^{0})
\ar[d]^{\lambda_{\ast}} \\
 & K({\mathbb R}) \ar@{=}[r]& K(\R)\\
}
\end{equation}
The lower square commutes up to a preferred homotopy, and so $\iota_{E}$ induces a map 
${\bar\iota}_{E}\colon \wh(E)\to \wh(\ast)$.  We have weak equivalences
\begin{equation}
\label{RATIONAL WE EQ}
\wh(\ast)_{\Q}\simeq \Omega K(\R)_{\Q} \simeq K_{1}(\R)\times \prod_{k>1}K(\R, 4k)
\end{equation}
where $\wh(\ast)_{\Q}$, $\Omega K(\R)_{\Q}$ denote rationalizations of $\wh(\ast)$ and $\Omega K(\R)$
respectively. The first weak equivalence is a consequence of the fact that 
$\tQ(S^{0})$ is rationally a discrete space and that $\lambda_{\ast}$ induces an isomorphism on 
the level of $\pi_{0}$.
The second weak equivalence in (\ref{RATIONAL WE EQ}) is given by the Borel regulator maps \cite{Borel}
\hskip -2pt \footnote{We rectify here a mistake in Section 7 of \cite{BDW} where it was incorrectly stated that 
$\wh(\ast)_{\Q}$ is a connected space.}. 

Consider the map 
\begin{equation}
\label{IOTA TAUS EQ}
{\bar \iota_{E}}\circ\taus(p)\colon |\simp|\to \wh(\ast)_{\Q}
\end{equation}
We claim that the image of this map always lies in the connected component of the identity element of 
$\wh(\ast)_{\Q}$. This is obvious if $B=\ast$. The general case follows from this fact and naturality
of the map (\ref{IOTA TAUS EQ}): if $p\colon E\to B$ is a unipotent bundle, $f\colon B'\to B$ 
is a smooth map and  $p'\colon f^{\ast}E\to B'$ is the bundle induced from $p$ then we have a homotopy 
 $${\bar \iota_{f^{\ast}E}}\circ\taus(p')\simeq {\bar \iota_{E}}\circ\taus(p)\circ |f|$$
 where $|f|\colon |\simp[B']|\to |\simp|$ is the map induced by $f$.
The proof of this property is the same as that of \cite[Proposition 7.3]{BDW}.

It follows that for a unipotent bundle $p$ the target of the map ${\bar \iota_{E}}\circ\taus(p)$ can be 
identified with $\prod_{k>1} K(\R, 4k)$. This gives rise the following 
\begin{definition}
\label{COH STOR DEF} 
For a unipotent bundle $p\colon E\to B$ the smooth cohomological torsion of $p$ is the cohomology class 
$\ts(p)\in \bigoplus_{k>1}H^{4k}(B; \R)$ represented by the map 
$${\bar \iota_{E}}\circ\taus(p)\colon |\simp|\to \wh(\ast)_{\Q}$$
\end{definition}

\noindent We use here the canonical identification of cohomology groups of the spaces $B$ and $|\simp|$.
\end{nn}

We are now ready to  state the main result of this paper.

\begin{theorem}
\label{MAIN THM}
For $k>0$ and a unipotent bundle  $p\colon E\to B$ let 
$$t^{s}_{4k}(p)\in H^{4k}(B; \R)$$
be the  degree $4k$ component of $t^{s}(p)$. 
The invariant  $t^{s}_{4k}$ is a non-trivial exotic higher torsion invariant of unipotent bundles 
in degree $4k$.
\end{theorem}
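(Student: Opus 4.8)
The plan is to verify each of the three properties asserted in Theorem~\ref{MAIN THM} separately: that $t^s_{4k}$ is (a) exotic, (b) satisfies the additivity axiom (\ref{ADD AX DEF}), (c) satisfies the transfer axiom (\ref{TRANSFER AX DEF}), and (d) is non-trivial. Properties (b) and (c) together say that $t^s_{4k}$ is a higher torsion invariant (\ref{HIGHER TOR DEF}); combined with (a) and Theorem~\ref{IGUSA EXOTIC THM} this also yields the proportionality statement of Theorem~\ref{MAIN INT THM}, but for the present theorem we only need the four itemized claims. The characteristic-class property (\ref{CHAR CLASS DEF}) for $t^s_{4k}$ -- i.e.\ naturality under pullback -- is recorded already in Section~\ref{COH STOR NN} (it follows from \cite[Prop. 7.3]{BDW} and the naturality of all the maps $p^!$, $\lambda_E$, $\omega_p$, $\bar\iota_E$), so I would cite it rather than reprove it.

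For the exotic property (a), let $q\colon D\to E$ be an oriented linear disc bundle over the total space of a unipotent bundle $p\colon E\to B$. Because the fibers of $q$ are contractible, the inclusion $E\hra D$ as the zero section is a fiberwise homotopy equivalence, and $pq\colon D\to B$ is a unipotent bundle whose fiberwise homology agrees with that of $p$. The point is to compare the data $(\bar p^{\,!},\bar\omega_p)$ defining $\taus(p)$ with the data $(\overline{(pq)}^{\,!},\bar\omega_{pq})$ defining $\taus(pq)$, after pushing forward along $E\hra D$. The Becker--Gottlieb transfers satisfy $q^!\simeq (\text{zero section})_*$ up to the relevant normalization -- this is the standard fact that the transfer of a bundle with contractible fibers is (homotopic to) the map induced by a section -- and the algebraic contractions match because $\Lambda^A$ sends the zero-section inclusion to a quasi-isomorphism, so the homology functors $H_{pq}$ and $H_p$ are naturally isomorphic. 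Feeding this into Remark~\ref{REDUCTION REM} (with $C=C'=H_\ast(F)$, $\gamma$ constant, and the homotopy-of-homotopies supplied by the two naturality statements) gives $\bar\iota_D\circ\taus(pq)\simeq \bar\iota_E\circ\taus(p)$ over $B$, hence $t^s_{4k}(pq)=t^s_{4k}(p)$ in $H^{4k}(B;\R)$. This is essentially routine once the transfer-of-a-disc-bundle input is in hand.

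Properties (b) and (c) are the substance of the paper and I would prove them by invoking the results the introduction flags as forthcoming. For additivity I would appeal to Theorem~\ref{ADD THM} of Section~\ref{ADD TORSION SEC}: given a unipotent splitting $p=p_1\cup_{p_0}p_2$, one assembles the transfers and algebraic contractions of $p_1$, $p_2$, $p_0$ using the Mayer--Vietoris description of $C_\ast$ of the fiber together with Waldhausen's pre-additivity theorem (\ref{WAL ADD THM}), which produces the preferred homotopy realizing $\taus(p)=\taus(p_1)+\taus(p_2)-\taus(p_0)$ at the level of maps into $\wh(E)$; pushing to $\wh(\ast)_{\Q}$ and extracting the degree-$4k$ component gives the additivity axiom. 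For the transfer axiom I would use Theorem~\ref{SEC TRANSFER THM} (the ``secondary transfer'' formula of Section~\ref{SECONDARY TRANSFER SEC }) and then Section~\ref{TRANSFER AXIOM SEC}, where it is shown that in the case when $q$ is an oriented linear sphere bundle the secondary transfer map reduces to multiplication by $\chi(F)$ plus the Becker--Gottlieb transfer $tr^E_B$, which is exactly the form (\ref{TRANSFER AX DEF}) demands. The main obstacle here -- and the real content of the paper -- is establishing that secondary-transfer formula and identifying it with the Becker--Gottlieb transfer at the chain-complex level: this requires controlling the interaction between the $A$-theory transfer of $q$, the linearization functor, and the passage-to-homology machinery of the Appendix, and it is where the unipotence hypothesis and the Leray--Hirsch condition enter.

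Finally, for non-triviality (d), I would defer to Section~\ref{NONTRIVIALITY SEC}: it suffices to exhibit a single unipotent bundle $p$ with $t^s_{4k}(p)\neq 0$. Rather than computing $t^s_{4k}$ directly, the efficient route is to use the already-established fact that $t^s_{4k}$ is an exotic higher torsion invariant, so by Theorem~\ref{IGUSA EXOTIC THM} it equals $\lambda\cdot t^{IK}_{4k}$ for some $\lambda\in\R$; then it is enough to check $\lambda\neq 0$ on one bundle where the Igusa--Klein torsion is known to be non-zero (for instance one of the exotic disc bundles of Hatcher detected in \cite{IguICM}). Concretely this amounts to computing $\bar\iota_E\circ\taus(p)$ for a suitably simple example -- e.g.\ a bundle over $S^{4k}$ -- where $p^!$ and $\omega_p$ can be made explicit and the resulting class in $H^{4k}(S^{4k};\R)$ is visibly a nonzero multiple of the known $t^{IK}_{4k}$ value; the mild obstacle is merely bookkeeping the normalization constants so that ``nonzero'' can be asserted with confidence.
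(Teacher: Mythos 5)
Your decomposition into (a) exotic, (b) additivity, (c) transfer, (d) non-trivial is exactly the paper's, and for (b) and (c) you cite the right forthcoming results (Theorem \ref{ADD THM}/Corollary \ref{ADD COR}, and Theorem \ref{SEC TRANSFER THM}/Corollary \ref{TRANSFER COR}). For (a) the paper simply cites \cite[Section 7]{BDW}; your disc-bundle sketch (transfer of a disc bundle agrees stably with the zero section, and the algebraic contractions match via the quasi-isomorphism induced by the zero section) is in the right spirit, though you are essentially reproducing the content of that reference.

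The genuine gap is in (d). Your proposed route is circular: writing $t^s_{4k}=\lambda\cdot t^{IK}_{4k}$ via Theorem \ref{IGUSA EXOTIC THM} and then ``checking $\lambda\neq 0$ on one bundle'' is not a shortcut, because verifying $\lambda\neq 0$ is precisely the statement that $t^s_{4k}(p)\neq 0$ for some $p$, which is what you were trying to prove. You then reduce to ``computing $\bar\iota_E\circ\taus(p)$ for a suitably simple example'' and asserting the answer is ``visibly a nonzero multiple of the known $t^{IK}_{4k}$ value,'' but no such computation is supplied, and the paper explicitly remarks after Theorem \ref{MAIN2 THM} that the proportionality constant $\lambda_{4k}$ has \emph{not} been computed and that doing so would require further work with the map $G/O\to\Omega\wh[\rm diff](\ast)$. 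So the ``mild obstacle'' of ``bookkeeping normalization constants'' is in fact the whole open problem.

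The paper's actual non-triviality argument (Theorem \ref{NONTRIVIALITY THM}) never produces a cohomology class explicitly. It chooses $f\colon S^{4k}\to BSO$ representing a torsion-free element of $\ker\bigl(J_*\colon\pi_{4k}BSO\to\pi_{4k}BSG\bigr)$, which exists because $\pi_{4k}BSO\cong\Z$ while $\pi_{4k}BSG$ is finite; takes $p$ to be the associated linear sphere bundle; lifts $f$ to $\tilde f\colon S^{4k}\to G/O$ since $J\circ f$ is null; and uses Lemma \ref{WAL J LEMMA} to identify $\bar\iota_E\circ\taus(p)$ with the composite $S^{4k}\overset{\tilde f}{\to}G/O\to\wh(\ast)$. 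The nonvanishing then follows because $(G/O)_\Q\to\wh(\ast)_\Q$ is injective on homotopy groups, by the cited results of B\"okstedt and Waldhausen, and $\tilde f$ is rationally essential by the torsion-freeness of $[f]$. This homotopy-theoretic factorization through $G/O$ is the missing ingredient in your proposal; without it, (d) is not proved.
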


Combining this with Igusa's Theorem \ref{IGUSA EXOTIC THM} we obtain 

\begin{theorem}
\label{MAIN2 THM}
For each $k>0$ there exists $0\neq \lambda_{4k}\in \R$ such that for every unipotent bundle 
$p\colon E\to B$ we have 
$$t^{s}_{4k}(p)=\lambda_{4k}t^{\it IK}_{4k}(p)$$
where $t^{\it IK}_{4k}$ is the Igusa-Klein higher torsion invariant in the degree $4k$.
\end{theorem}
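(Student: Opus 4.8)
The plan is to obtain Theorem \ref{MAIN2 THM} as a formal consequence of Theorem \ref{MAIN THM} together with Igusa's rigidity result, Theorem \ref{IGUSA EXOTIC THM}. The substantive content — that the smooth Dwyer-Weiss-Williams torsion $t^{s}_{4k}$ is an exotic higher torsion invariant and that it is non-trivial — is precisely the assertion of Theorem \ref{MAIN THM}, whose proof occupies the bulk of the paper: additivity is established in Section \ref{ADD TORSION SEC} (Theorem \ref{ADD THM}), the transfer axiom in Sections \ref{SECONDARY TRANSFER SEC } and \ref{TRANSFER AXIOM SEC} by way of the secondary transfer map (Theorem \ref{SEC TRANSFER THM}), exoticness directly from Definition \ref{EXOTIC DEF}, and non-triviality in Section \ref{NONTRIVIALITY SEC}. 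Granting Theorem \ref{MAIN THM}, the deduction below is short.

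First I would invoke Theorem \ref{MAIN THM} to conclude that, for each fixed $k>0$, the characteristic class $t^{s}_{4k}$ of unipotent bundles lies in the class of exotic higher torsion invariants in degree $4k$. Then Theorem \ref{IGUSA EXOTIC THM} applies verbatim and produces a real number $\lambda_{4k}$ such that
$$t^{s}_{4k}(p)=\lambda_{4k}\cdot t^{\it IK}_{4k}(p)$$
for every unipotent bundle $p\colon E\to B$, where $t^{\it IK}_{4k}$ is the degree $4k$ Igusa-Klein torsion. This is the required proportionality.

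It then remains only to verify $\lambda_{4k}\neq 0$, and this is where the non-triviality clause of Theorem \ref{MAIN THM} enters. That clause supplies a particular unipotent bundle $p_{0}\colon E_{0}\to B_{0}$ for which $t^{s}_{4k}(p_{0})\neq 0$ in $H^{4k}(B_{0};\R)$. Since the displayed identity gives $t^{s}_{4k}(p_{0})=\lambda_{4k}\cdot t^{\it IK}_{4k}(p_{0})$ and the left-hand side is non-zero, we must have $\lambda_{4k}\neq 0$ (and, as a by-product, $t^{\it IK}_{4k}(p_{0})\neq 0$). This completes the argument.

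Viewed in isolation, Theorem \ref{MAIN2 THM} presents no real obstacle: the entire difficulty has been transferred into Theorem \ref{MAIN THM}. If I had to single out the delicate point in that underlying result, it would be the verification of Igusa's transfer axiom (\ref{TRANSFER AX DEF}), which requires comparing $t^{s}(pq)$ with $t^{s}(p)$ and $t^{s}(q)$ through the secondary transfer construction and then specializing to the oriented linear sphere bundles appearing in that axiom; by comparison, additivity and exoticness are relatively soft, and non-triviality is settled by the explicit computation of Section \ref{NONTRIVIALITY SEC}.
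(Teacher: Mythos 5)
Your proof is correct and takes essentially the same approach as the paper: the paper derives Theorem \ref{MAIN2 THM} by combining Theorem \ref{MAIN THM} (which asserts $t^{s}_{4k}$ is a non-trivial exotic higher torsion invariant) with Igusa's rigidity result Theorem \ref{IGUSA EXOTIC THM}, exactly as you do. Your explicit observation that non-triviality forces $\lambda_{4k}\neq 0$ is left implicit in the paper's one-line deduction, but is the same argument.
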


\begin{remark}
 It would be interesting to know the exact value of the proportionality constant $\lambda_{4k}$.
 Such computation seems within reach, and one should be able  accomplish it by a careful analysis 
 of the map $G/O \to \Omega \wh[\rm diff](\ast)$ constructed by Waldhausen in \cite{WalM1}. 
 We do not attempt it in this paper.
 \end{remark}

The remainder of this paper is devoted to the proof of Theorem \ref{MAIN THM}. 
The fact the cohomological smooth torsion defines  characteristic classes of unipotent 
bundles was proved in \cite[Theorem 7.3]{BDW}. Also, directly from the constructions 
in \cite[Section 7]{BDW} it follows that these characteristic classes are exotic
\hskip -2pt\footnote{This fact was also observed by Igusa \cite[p. 185]{IgusaAx}.}. It remains to show 
that $t^{s}$ is a higher torsion invariant i.e. that it satisfies the additivity (\ref{ADD AX DEF}) 
and transfer (\ref{TRANSFER AX DEF}) axioms. We verify the additivity formula for $t^{s}$ 
in the next section (see Corollary \ref {ADD COR}). The transfer axiom is the subject of 
Corollary \ref{TRANSFER COR}. Finally, in Section \ref{NONTRIVIALITY SEC} we show 
that for every $k>0$ there exists a unipotent bundle $p$ such that $t^{s}_{4k}(p)\neq 0$. 
This shows that the characteristic classes $t^{s}_{4k}$ are non-trivial invariants.

\section{Additivity of  the smooth torsion}
\label{ADD TORSION SEC}
As we have indicated above our goal in this section  is to verify that the  smooth torsion 
satisfies an analog of the additivity axiom of Igusa (\ref{ADD AX DEF}):

\begin{theorem}
\label{ADD THM}
Let $p\colon E\to B$ be a bundle with a unipotent splitting (\ref{SPLIT BUNDLE DEF})
$$p=p_{1}\cup_{p_{0}}p_{2}$$ 
where $p_{i}\colon E_{i}\to B$. For $i=0, 1, 2$ let $j_{i\ast}\colon \wh(E_{i})\to \wh(E)$
be the map induced by the inclusion $j_{i}\colon E_{i}\hra E$.
There exists a homotopy 
$$\taus(p)  \simeq j_{1\ast}\taus(p_{1})+j_{2\ast}\taus(p_{2}) + \bar g^{\wh}$$
where $\bar g^{\wh}$ is a map representing the homotopy type of $-j_{0\ast}\taus(p_{0})$
in the $H$-space structure of $\wh(E)$. 
\end{theorem}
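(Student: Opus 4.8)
The plan is to reduce the additivity statement for $\taus(p)$ to a statement about the three ingredients that define it — the Becker-Gottlieb transfer $p^{!}$, the assembly/linearization maps, and the algebraic contraction $\omega_{p}$ — and to check that each ingredient is additive with respect to the fiberwise decomposition $E=E_{1}\cup_{E_{0}}E_{2}$, matching up all the preferred homotopies. More precisely, I would invoke Remark \ref{REDUCTION REM}: since $\taus(p)$ is the reduction of the map $\varphi_{p}\colon|\simp|\to\wh(E)_{H_{\ast}(F)}$ determined by the pair $(p^{!},\omega_{p})$, it suffices to produce, for the comparison map $j_{1\ast}\taus(p_{1})+j_{2\ast}\taus(p_{2})+\bar g^{\wh}$, a presentation as the reduction of a pair $(f',\omega')$, together with a homotopy $h$ between $p^{!}$ and $f'$, a path $\gamma$ in $K(\R)$, and a homotopy of homotopies relating the concatenations $\lambda_{E}h*\omega'$ and $\omega_{p}*\gamma$. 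All three of these are then supplied by Waldhausen's pre-additivity theorem (\ref{WAL ADD THM}) applied to an appropriate functor into $\SS[2]\Chfd(\R)$ (or $\TT[2]$) built from the Mayer–Vietoris cofibration sequence $C_{\ast}(\sigma^{\ast}E_{0})\ratail C_{\ast}(\sigma^{\ast}E_{1})\oplus C_{\ast}(\sigma^{\ast}E_{2})\ratail C_{\ast}(\sigma^{\ast}E)$ (equivalently the pushout of retractive spaces $\sigma^{\ast}E=\sigma^{\ast}E_{1}\cup_{\sigma^{\ast}E_{0}}\sigma^{\ast}E_{2}$).

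The key steps, in order. First, additivity of the transfer: the Becker–Gottlieb transfer satisfies $p^{!}\simeq j_{1*}p_{1}^{!}+j_{2*}p_{2}^{!}-j_{0*}p_{0}^{!}$ in $\tQ(E_{+})$, because $E=E_{1}\cup_{E_{0}}E_{2}$ is a fiberwise pushout of compact manifold bundles and the stable transfer is additive over such decompositions (cofiber-sequence/excision property of $\Omega^{\infty}\Sigma^{\infty}$); this also follows combinatorially from the partition-category model of $p^{!}$ used in \cite{BDW}, where a partition of $E$ is assembled from partitions of $E_{1}$, $E_{2}$, $E_{0}$. This gives the homotopy $h$, and simultaneously the map $f'$ (namely $j_{1*}p_{1}^{!}+j_{2*}p_{2}^{!}-j_{0*}p_{0}^{!}$, using the coproduct $H$-space structure on $\tQ(E_{+})$). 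Second, compatibility of linearization: applying $\lambda_{E}$ and using that $j_{i*}$ and $\lambda_{E_{i}}$ commute with the linearization of $E$ up to preferred homotopy, $\lambda_{E}h$ matches the sum of $\lambda_{E_{i}}(\text{reduction of }p_{i}^{!})$. Third, and this is the heart of the matter, additivity of the algebraic contraction: I must show $\omega_{p}\simeq j_{1*}\omega_{p_{1}}+j_{2*}\omega_{p_{2}}-j_{0*}\omega_{p_{0}}$ as homotopies, up to the required homotopy of homotopies. This is done by examining the three constituent homotopies $\omega^{(1)},\omega^{(2)},\omega^{(3)}$ of \ref{ALG CONTR NN}: $\omega^{(1)}$ (passing to homology) is additive because homology takes the Mayer–Vietoris cofibration sequence to a cofibration sequence of chain complexes with zero differential, and the comparison is exactly a pre-additivity homotopy; $\omega^{(2)}$ (restriction to the zeroth vertex) is natural, hence additive for the same Mayer–Vietoris reason applied to the fibers $F_{\sigma(0)}$; and $\omega^{(3)}$ is already built using Waldhausen pre-additivity from a filtration of $H_{\ast}(F)$, which one chooses compatibly with the Mayer–Vietoris filtration coming from $H_{\ast}(F_{1})$, $H_{\ast}(F_{2})$, $H_{\ast}(F_{0})$ — and by the remark in \ref{ALG CONTR NN} that $\omega^{(3)}$ is independent of the choice of filtration, this causes no loss. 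The path $\gamma$ in $K(\R)$ joins $H_{\ast}(F)$ to $H_{\ast}(F_{1})\oplus H_{\ast}(F_{2})\ominus H_{\ast}(F_{0})$; it is the Mayer–Vietoris identification in $K$-theory, again produced by pre-additivity.

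The main obstacle I anticipate is bookkeeping rather than conceptual: the three-fold decomposition $E_{1}\cup_{E_{0}}E_{2}$ with the sign on $E_{0}$ forces one to work with a $\SS[2]$- (or $\TT[2]$-) diagram whose "three" evaluations $Ev_{1},Ev_{2},Ev_{12}$ are used to peel off $E_{0}$, $E$-relative-to-$E_{0}$, etc., and the pre-additivity homotopy $\mho$ must be applied consistently and functorially over all of $\simp$ so that it glues to a homotopy on $|\simp|$; then the homotopies of homotopies demanded by Remark \ref{REDUCTION REM} must be assembled by iterating $\mho$ (a "pre-additivity for homotopies", obtained by applying $\mho$ to the universal $2$-simplex $\Delta^{2}$ worth of data). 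Keeping all the preferred homotopies coherent — in particular checking that the homotopy of homotopies squares up on the nose, so that the reduced torsions agree and not merely the unreduced ones — will require care but no new idea beyond repeated use of Theorem \ref{WAL ADD THM} and naturality of the constructions in \cite{BDW}. Once Theorem \ref{ADD THM} is in hand, Corollary \ref{ADD COR} (the cohomological additivity $t^{s}(p)=t^{s}(p_{1})+t^{s}(p_{2})-t^{s}(p_{0})$) follows by pushing forward along ${\bar\iota_{E}}$ and using that $\bar\iota_{E}\circ j_{i*}=\bar\iota_{E_{i}}$ together with the identification of the target with $\prod_{k>1}K(\R,4k)$, where the $H$-space sum becomes addition of cohomology classes.
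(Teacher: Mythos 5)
Your high-level reduction is the right one — decompose $\taus(p)$ into the transfer $p^{!}$ and the algebraic contraction $\omega_{p}$, use Remark \ref{REDUCTION REM}, obtain the required homotopies and homotopies-of-homotopies from Waldhausen pre-additivity, and check the three pieces $\omega^{(1)},\omega^{(2)},\omega^{(3)}$ separately. This is exactly the skeleton of the paper's argument. But the specific combinatorial input you propose to feed into pre-additivity is the wrong one, and the gap is genuine, not bookkeeping.

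You claim that the Mayer--Vietoris short exact sequence of chain complexes $C_{\ast}(\sigma^{\ast}E_{0})\ratail C_{\ast}(\sigma^{\ast}E_{1})\oplus C_{\ast}(\sigma^{\ast}E_{2})\ratail C_{\ast}(\sigma^{\ast}E)$ is ``equivalently the pushout of retractive spaces.'' This is not equivalent. The Mayer--Vietoris sequence is a cofibration sequence only after linearization; it does not come from a cofibration sequence in $\Rfd(E)$ or in the category of partitions of $E$ (the map $\sigma^{\ast}E_{0}\to\sigma^{\ast}E_{1}\sqcup\sigma^{\ast}E_{2}$ does not have $\sigma^{\ast}E$ as cofiber, and $\sigma^{\ast}E_{1}\sqcup\sigma^{\ast}E_{2}\to\sigma^{\ast}E$ is not a cofibration). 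Since the transfer $p^{!}$ and the map $p^{A}$ live in $\tQ(E_{+})$ and $A(E)$ respectively, Waldhausen's pre-additivity has to be applied to a functor $\simp\to\TT[2]\Rfd(E)$ (equivalently a functor into the $\TT[2]$ of partitions), not merely into $\SS[2]\Chfd(\R)$; otherwise you get a homotopy at the $K(\R)$ level which you cannot tie to a homotopy of transfers compatibly. The paper resolves this with a bicollar of $E_{0}$: it replaces $E_{1},E_{2}$ by the disjoint collar complements $E_{1}',E_{2}'$, so that $E\sqcup(\sigma^{\ast}E_{1}'\sqcup\sigma^{\ast}E_{2}')\ratail E\sqcup\sigma^{\ast}E$ \emph{is} a cofibration in $\Rfd(E)$ whose cofiber is a suspension of $\sigma^{\ast}E_{0}$; this functor $\Gamma^{A}\colon\simp\to\TT[2]\Rfd(E)$ is what the pre-additivity theorem is applied to (Theorem \ref{ADD TRANSFER THM}), and it then linearizes to the short exact sequence (\ref{ADD SES EQ}), which is an excision sequence, not Mayer--Vietoris. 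The paper even remarks that existence of the transfer additivity homotopy $\gamma^{Q}$ is classical (Becker--Schultz) but that its \emph{combinatorial description} is what the proof needs; your invocation of abstract additivity of the Becker--Gottlieb transfer therefore does not close the gap. Two smaller points: for $\omega^{(3)}$ the paper does not choose filtrations compatibly, it uses that the long exact sequence of the pair $(F, F_{1}'\sqcup F_{2}')$ is a sequence of $\pi_{1}B$-modules; and the homotopy-of-homotopies for the $\omega^{(1)}$ square is not ``exactly a pre-additivity homotopy'' — it is the explicit construction $\mho^{H}$ and Lemma \ref{ADD THETA LEMMA} in the Appendix, which requires comparing the $\Theta$-homotopy (passage to homology) with $\mho$ via the kernels $\ker f$, $\ker g$, $\ker\delta$ of the long exact homology sequence.
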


From Theorem \ref{ADD THM} we immediately obtain 

\begin{corollary}
\label{ADD COR}
The smooth cohomological torsion $\ts$ satisfies the additivity axiom (\ref{ADD AX DEF}). 
\end{corollary}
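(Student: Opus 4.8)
The plan is to deduce Corollary~\ref{ADD COR} from Theorem~\ref{ADD THM} by passing from the unstable statement about maps into Whitehead spaces to the cohomological statement. The key point is that the map $\bar\iota_{E}\colon \wh(E)\to\wh(\ast)$, followed by rationalization and the identification $\wh(\ast)_{\Q}\simeq K_{1}(\R)\times\prod_{k>1}K(\R,4k)$ of (\ref{RATIONAL WE EQ}), is a map of infinite loop spaces, so it carries the $H$-space addition of $\wh(E)$ to the addition of $\wh(\ast)_{\Q}$, and hence to addition of cohomology classes under the identification of $\prod_{k>1}K(\R,4k)$ with $\bigoplus_{k>1}H^{4k}(B;\R)$-valued functors.

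First I would apply $\bar\iota_{E}$ to the homotopy provided by Theorem~\ref{ADD THM}. Since $\bar\iota_{E}$ is an infinite loop map we obtain
$$\bar\iota_{E}\circ\taus(p)\ \simeq\ \bar\iota_{E}\circ j_{1\ast}\taus(p_{1})\ +\ \bar\iota_{E}\circ j_{2\ast}\taus(p_{2})\ +\ \bar\iota_{E}\circ\bar g^{\wh},$$
where the right-hand side is interpreted in the $H$-space $\wh(\ast)$, and the last term represents $-\,\bar\iota_{E}\circ j_{0\ast}\taus(p_{0})$. Next I would observe that the augmentation maps are compatible with the inclusions $j_{i}\colon E_{i}\hra E$: the map $\iota_{E}\circ\tQ(j_{i})$ equals $\iota_{E_{i}}\colon\tQ((E_{i})_{+})\to\tQ(S^{0})$, since both are induced by the unique map $E_{i}\to\ast$. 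Consequently $\bar\iota_{E}\circ j_{i\ast}$ agrees (up to preferred homotopy) with $\bar\iota_{E_{i}}\colon\wh(E_{i})\to\wh(\ast)$, and therefore $\bar\iota_{E}\circ j_{i\ast}\taus(p_{i})\simeq\bar\iota_{E_{i}}\circ\taus(p_{i})$. Rationalizing and projecting onto the degree-$4k$ factor, the class represented by $\bar\iota_{E_{i}}\circ\taus(p_{i})$ is by Definition~\ref{COH STOR DEF} precisely $\ts(p_{i})$, pushed forward to $H^{\ast}(B;\R)$; but all four bundles share the same base $B$, so no pushforward is needed and we recover $\ts(p_{i})\in\bigoplus_{k>1}H^{4k}(B;\R)$ on the nose.

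Finally I would read off the additivity formula: the displayed homotopy, after applying $\bar\iota_{E}$, rationalizing, and passing to the component of the identity (which is legitimate by the discussion preceding Definition~\ref{COH STOR DEF}, where it is shown that each $\bar\iota_{E}\circ\taus(p)$ lands in the identity component), becomes an equality of maps $|\simp|\to\prod_{k>1}K(\R,4k)$ up to homotopy, i.e.\ an equality of cohomology classes
$$\ts(p)\ =\ \ts(p_{1})\ +\ \ts(p_{2})\ -\ \ts(p_{0})$$
in $\bigoplus_{k>1}H^{4k}(B;\R)$, which is exactly the additivity axiom (\ref{ADD AX DEF}). The only mild subtlety, and the step I would be most careful about, is the bookkeeping that the $H$-space structure used in Theorem~\ref{ADD THM} for the term $\bar g^{\wh}$ is the same infinite loop structure that gets carried to the additive structure on cohomology under (\ref{RATIONAL WE EQ}); but this is immediate because $\lambda_{E}$ and $\iota_{E}$ are infinite loop maps, so $\wh(E)$, $\wh(\ast)$ and their rationalizations all carry compatible infinite loop structures, and the Borel regulator equivalence is one of infinite loop spaces as well. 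Hence the sign and the identification of $\bar g^{\wh}$ with $-\ts(p_{0})$ go through without further work.
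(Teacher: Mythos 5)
Your proof is correct and takes the same route as the paper, which simply declares Corollary~\ref{ADD COR} an immediate consequence of Theorem~\ref{ADD THM}; you have supplied the details the paper leaves implicit, namely that $\bar\iota_{E}$ is an infinite loop map, that $\bar\iota_{E}\circ j_{i\ast}\simeq\bar\iota_{E_{i}}$ by compatibility of augmentations, and that passage through the rational identification (\ref{RATIONAL WE EQ}) turns the $H$-space homotopy of Theorem~\ref{ADD THM} into the cohomological identity $\ts(p)=\ts(p_{1})+\ts(p_{2})-\ts(p_{0})$.
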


As we indicated in Section \ref{DWW SEC} the smooth torsion $\taus(p)$ of a unipotent bundle 
$p\colon E\to B$ consists of two main ingredients: the Becker-Gottlieb transfer 
$p^{!}\colon |\simp|\to \tQ(E_{+})$ and the algebraic contraction $\omega_{p}\colon |\simp|\times I\to K(\R)$.
Our first aim is to describe the additive property of the transfer map. Recall the commutative 
diagram (\ref{pA EQ}). We have

\begin{theorem}
\label{ADD TRANSFER THM}
Let $p\colon E\to B$ be a smooth bundle with a unipotent splitting
$$p=p_{1}\cup_{p_{0}}p_{2}$$ 
where $p_{i}\colon E_{i}\to B$.  Let $j_{i\ast}\colon \tQ(E_{i+})\to \tQ(E_{+})$ and 
$j_{i\ast}\colon A(E_{i})\to A(E)$ be the maps induced by the inclusions $j_{i}\colon E_{i}\hra E$.

\vskip 2mm
\begin{list}{}{\itemsep 3mm \leftmargin 0mm \itemindent 2mm}
\item[{\it (i)}] There exits a preferred homotopy 
$$\gamma^{A} \colon |\simp|\times I \to A(E)$$
between the maps $p^{A}$ and $j_{1\ast}p^{A}_{1}+j_{2\ast}p^{A}_{1} + g^{A}$, 
where $g^{A}$ is a map representing the homotopy type of $-j_{0\ast}p^{A}_{0}$.
\item[{\it (ii)}] There exits a preferred homotopy 
$$\gamma^{Q} \colon |\simp|\times I \to \tQ(E_{+})$$
between the maps $p^{!}$ and $j_{1\ast}p^{!}_{1}+j_{2\ast}p^{!}_{1} + g^{Q}$
where $g^{Q}$ is a map representing the homotopy type of $-j_{0\ast}p^{!}_{0}$.
Moreover, we have $\ta_{E}g^{Q}=g^{A}$ and  $\ta_{E}\circ \gamma^{Q}=\gamma^{A}$.
\end{list}
\end{theorem}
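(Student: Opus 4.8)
The plan is to prove (i) first, then deduce (ii) by transporting everything along the Becker–Gottlieb transfer using the commutative diagram (\ref{pA EQ}). The essential input for (i) is Waldhausen's pre-additivity theorem (\ref{WAL ADD THM}), applied to the Waldhausen category $\Rfd(E)$ and a suitably chosen functor into $\TT[2]\Rfd(E)$. Given a smooth singular simplex $\sigma\colon\Delta^k\to B$, the splitting $E=E_1\cup_{E_0}E_2$ induces a splitting of the pulled-back total space $\sigma^{\ast}E=\sigma^{\ast}E_1\cup_{\sigma^{\ast}E_0}\sigma^{\ast}E_2$, and hence a filtration of the retractive space $E\sqcup\sigma^{\ast}E$ over $E$. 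Concretely I would take the cofibration sequence
$$
E\sqcup\sigma^{\ast}E_0\ \ratail\ E\sqcup\sigma^{\ast}E_1\ \ratail\ E\sqcup\sigma^{\ast}E
$$
in $\Rfd(E)$; note the quotient of the second map is $\sigma^{\ast}E_2/\sigma^{\ast}E_0$, which by excision (the inclusion $\sigma^{\ast}E_0\hra\sigma^{\ast}E_2$ being a cofibration) is weakly equivalent to $\sigma^{\ast}E/\sigma^{\ast}E_1$, i.e.\ to $E\sqcup\sigma^{\ast}E_2$ modulo $E\sqcup\sigma^{\ast}E_0$. This assignment is functorial in $\sigma$, so it defines a functor $G_p\colon\simp\to\TT[2]\Rfd(E)$ with $Ev_1\circ G_p=F_{p_1^{A}}$ (composed with the inclusion $j_{1}$), $Ev_{12}\circ G_p$ representing $j_{2\ast}F_{p_2^{A}}$ up to the excision weak equivalence, $Ev_2\circ G_p=F_{p^{A}}$, and the ``$c_0$ slot'' $j_{0\ast}F_{p_0^{A}}$. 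Applying (\ref{ST MAP NN}) and then Theorem \ref{WAL ADD THM} produces the preferred homotopy $\gamma^{A}$ between $p^{A}$ and $j_{1\ast}p^{A}_1+j_{2\ast}p^{A}_2+g^{A}$, where $g^{A}$ is the map built from the $\TT[2]$-slot $c_0\ratail c_1$ read as an element of $\TT[1]\Rfd(E)$, which by pre-additivity applied again (or by the inverse map in the $H$-space) represents $-j_{0\ast}p^{A}_0$.

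For part (ii), the key point is that the Becker–Gottlieb transfer $p^{!}$ admits the same combinatorial description and that the constructions in \cite{BDW} make the diagram (\ref{pA EQ}) commute on the nose. The plan is to repeat the argument of (i) one level down: the stable-homotopy model $\tQ(E_{+})$ is the $\TTc$-construction on the category of partitions of $E$, and the transfer $p^{!}$ is induced (via (\ref{ST MAP NN})) by a functor $\simp\to\TT[1](\text{partitions of }E)$ built out of the fiberwise data of $p$. The splitting of $p$ induces, exactly as above, a functor into $\TT[2](\text{partitions of }E)$ whose three evaluation functors recover the transfers of $p$, $p_1$, $p_2$ (pushed into $E$) together with the $p_0$-slot. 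Pre-additivity then yields $\gamma^{Q}$ between $p^{!}$ and $j_{1\ast}p^{!}_1+j_{2\ast}p^{!}_2+g^{Q}$. Since the assembly map $\ta_E$ and the functor $F_{p^{A}}$ are constructed in \cite{BDW} compatibly with these $\TTc$-constructions — this is precisely the content of the commutativity of (\ref{pA EQ}) — applying $\ta_E$ to the $\TT[2]$-level functor and then invoking naturality of the homotopy $\mho$ of Theorem \ref{WAL ADD THM} under the (almost) exact functor inducing $\ta_E$ gives $\ta_E\circ\gamma^{Q}=\gamma^{A}$ and $\ta_E g^{Q}=g^{A}$.

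The main obstacle I anticipate is not the formal bookkeeping but verifying that the excision weak equivalence $\sigma^{\ast}E_2/\sigma^{\ast}E_0\simeq\sigma^{\ast}E/\sigma^{\ast}E_1$ is natural in $\sigma$ and compatible with everything in sight, so that $Ev_{12}\circ G_p$ genuinely represents $j_{2\ast}p^{A}_2$ (and similarly at the $\tQ$-level). This requires that the splitting $p=p_1\cup_{p_0}p_2$ be a splitting of \emph{bundles} with $p_0$ the vertical boundary of both $p_1$ and $p_2$ (Definition \ref{SPLIT BUNDLE DEF}), so that fiberwise the pieces glue along a collar and the pushout $E_1\cup_{E_0}E_2$ behaves homotopically like a genuine union; the collar gives the naturality. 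A secondary technical point is checking that the ``inverse slot'' $g^{A}$ (resp.\ $g^{Q}$), which a priori is only \emph{a} map representing $-j_{0\ast}p^{A}_0$ (resp.\ $-j_{0\ast}p^{!}_0$) in the $H$-space structure, can be chosen so that $\ta_E g^{Q}=g^{A}$ strictly — this follows because the $H$-space inversions on $A(E)$ and $\tQ(E_{+})$ are induced by the same simplicial-level data and $\ta_E$ is an infinite-loop map, but it is worth spelling out. Everything else is a routine application of pre-additivity and the naturality statements in Section \ref{WAL SEC}.
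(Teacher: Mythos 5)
Your proposal has a genuine gap in the identification of the three evaluation functors, and you are missing the bicollar trick that makes the paper's argument clean.

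Recall from the conventions in the paper that an object of $\TT[2]\Rfd(E)$ is a three-term sequence $c_{0}\ratail c_{1}\ratail c_{2}$ and that under (\ref{ST MAP NN}) the cofibration $c_{0}\ratail c_{i}$ represents the class $[c_{i}/c_{0}]$. With your choice $c_{0}=E\sqcup\sigma^{\ast}E_{0}$, $c_{1}=E\sqcup\sigma^{\ast}E_{1}$, $c_{2}=E\sqcup\sigma^{\ast}E$, the evaluations satisfy
$$Ev_{2}\circ G_{p}(\sigma)=(E\sqcup\sigma^{\ast}E_{0}\ratail E\sqcup\sigma^{\ast}E),\qquad
Ev_{1}\circ G_{p}(\sigma)=(E\sqcup\sigma^{\ast}E_{0}\ratail E\sqcup\sigma^{\ast}E_{1}),$$
whose $K$-theory classes are $[\sigma^{\ast}E]-[\sigma^{\ast}E_{0}]$ and $[\sigma^{\ast}E_{1}]-[\sigma^{\ast}E_{0}]$ respectively. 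These are \emph{not} equal to $F_{p^{A}}(\sigma)=(E\ratail E\sqcup\sigma^{\ast}E)$ and $j_{1\ast}F_{p_{1}^{A}}(\sigma)=(E\ratail E\sqcup\sigma^{\ast}E_{1})$, whose classes are $[\sigma^{\ast}E]$ and $[\sigma^{\ast}E_{1}]$. Applying pre-additivity to your functor thus yields a homotopy realizing, roughly, the relation
$$\bigl(p^{A}-j_{0\ast}p_{0}^{A}\bigr)\simeq\bigl(j_{1\ast}p_{1}^{A}-j_{0\ast}p_{0}^{A}\bigr)+\bigl(j_{2\ast}p_{2}^{A}-j_{0\ast}p_{0}^{A}\bigr),$$
which of course is equivalent at the level of $\pi_{0}$ to what you want, but is not the preferred homotopy of the theorem. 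To turn it into one you would need to cancel the extra $-j_{0\ast}p_{0}^{A}$ term on both sides, which means a further application of pre-additivity and a nontrivial compatibility argument with the $H$-space structure — exactly the sort of bookkeeping that would swamp your ``routine application''.

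The paper avoids the problem entirely with a \emph{bicollar}. It chooses a fiberwise bicollar $b\colon E_{0}\times[-1,1]\hra E$ of $E_{0}$, shrinks $E_{1}$ and $E_{2}$ to disjoint pieces $E'_{1}=E_{1}-b(E_{0}\times(-1,0])$ and $E'_{2}=E_{2}-b(E_{0}\times[0,1))$, and feeds into $\TT[2]\Rfd(E)$ the sequence
$$E\ratail E\sqcup(\sigma^{\ast}E'_{1}\sqcup\sigma^{\ast}E'_{2})\ratail E\sqcup\sigma^{\ast}E.$$
Now the zeroth slot is literally the basepoint object $E$, so $Ev_{2}$ is exactly $F_{p^{A}}$, $Ev_{1}$ is exactly the disjoint union representing $j'_{1\ast}q_{1}^{A}+j'_{2\ast}q_{2}^{A}$ (which is then identified with $j_{1\ast}p_{1}^{A}+j_{2\ast}p_{2}^{A}$ via the fiberwise diffeomorphisms $q_{i}\cong p_{i}$), and $Ev_{12}$ is the functor $G^{A}$ whose cofiber is the fiberwise suspension of $\sigma^{\ast}E_{0}$, which is what gives $g^{A}\simeq -j_{0\ast}p_{0}^{A}$. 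Disjointness of $E'_{1}$, $E'_{2}$ also makes the sum genuinely a coproduct in $\Rfd(E)$, sidestepping your worry about naturality of the excision weak equivalence. Your plan for part (ii) and the compatibility $\ta_{E}\gamma^{Q}=\gamma^{A}$ is in spirit the same as the paper's (run the same construction in the category of partitions, then use that $\ta_{E}$ respects the $\TTc$-constructions), but it needs to be applied to the corrected $\TT[2]$-functor.
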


\begin{remark}
Existence of the homotopy $\gamma^{Q}$ is well known; it is in fact one of the properties characterizing the
Becker-Gottlieb transfer map given in \cite{Becker-Schultz}. 
The proof of  Theorem \ref{ADD THM} will require, however, 
a combinatorial description of $\gamma^{A}$ and $\gamma^{Q}$ given in the proof of Theorem 
\ref{ADD TRANSFER THM}.
\end{remark}

\begin{proof}[Proof of Theorem \ref{ADD TRANSFER THM}]
{\em Part (i).} Our construction of the homotopy $\gamma^{A}$ will parallel the one given in \cite[Section 3]{BD}.
Let $b\colon E_{0}\times [1, -1]\to E$ be a fiberwise bicollar neighborhood of $E_{0}$ in $E$. Thus,
$b$ is a smooth embedding such that $b(E_{0}\times \{0\})=E_{0}$, $b(E_{0}\times [-1, 0])\subseteq E_{1}$,
$b(E_{0}\times [0, 1])\subseteq E_{2}$, and such that we have a commutative diagram 
$$\xymatrix{
E_{0}\times [-1, 1] \ar[rr]^{b}\ar[rd]_{p_{0}\circ \ { pr_{1}}} & & E \ar[ld]^{p}\\
& B & \\
}$$ 
where $pr_{1}\colon E_{0}\times [-1, 1]\to E_{0}$ is the projection on the first factor. 
Define 
$$E_{1}' := E_{1} - b(E_{0}\times (-1, 0]) \text{\  \ and \ } E'_{2}:=E_{2} - b(E_{0}\times [0, 1))$$
Restricting $p$ to $E_{i}'$ we obtain smooth bundles 
$$q_{i}\colon E'_{i}\to B, \ \ i=1,2$$
Let $j'_{i}\colon E'_{i}\hra E$ be the inclusion map. The bundle $q_{i}$ is fiberwise diffeomorphic 
to $p_{i}$ and we have a homotopy 
$$j'_{i\ast}q_{i}^{A}\simeq j_{i\ast}p_{i}^{A}$$
In view of this it suffices to construct a homotopy $\tilde\gamma^{A}$ between the maps 
$p^{A}$ and $j'_{1\ast}q^{A}_{1}+j'_{2\ast}q^{A}_{2} + g^{A}$ for an appropriate choice of $g^{A}$.

Recall  that the map $p^{A}$ is induced by the functor $F_{p^{A}}\colon \simp \to \TT[1]\Rfd(E)$
(\ref{FpA EQ}). For $i=1, 2$ let 
$$\oF_{q^{A}_{i}}\colon \simp \to \TT[1]\Rfd(E)$$
be the functor which assigns to a simplex $\sigma\in \simp$ the cofibration 
$$\oF_{q^{A}_{i}}(\sigma):=(E\ratail E\sqcup \sigma^{\ast}E'_{i})$$
where $ \sigma^{\ast}E'_{i}$ is defined as in (\ref{TR LIN NN}). 
The maps $j'_{i\ast}q^{A}_{i}\colon |\simp|\to A(E)$ are obtained from the functors $\oF_{q_{i}^{A}}$ 
using (\ref{ST MAP NN}). Likewise, the map $j_{0\ast}p_{0}^{A}$ comes from the functor 
$\oF_{p_{0}^{A}}\colon \simp \to \TT[1]\Rfd(E)$ given by 
$$\oF_{p_{0}^{A}}(\sigma):=(E\hra E\sqcup \sigma^{\ast} E_{0})$$
It follows that the map $j'_{1\ast}q^{A}_{1}+j'_{2\ast}q^{A}_{2}$ is represented by the functor 
$$\oF'_{p_{1}^{A}}\sqcup \oF'_{p_{2}^{A}}\colon \simp\to \TT[1]\Rfd(E)$$
where
$$\oF'_{p_{1}^{A}}\sqcup \oF'_{p_{2}^{A}}(\sigma)=
(E\ratail E\sqcup \sigma^{\ast}E'_{1}\sqcup \sigma^{\ast} E'_{2})$$
Notice  that for every  $\sigma\in \simp$ we have a sequence of cofibrations
\begin{equation}
\label{GAMMA COF SEQ EQ}
E\ratail  E\sqcup (\sigma^{\ast}E'_{1}\sqcup \sigma^{\ast} E'_{2})\ratail E\sqcup \sigma^{\ast}E
\end{equation}
Consider the functor 
\begin{equation}
\label{GAMMA FUNCT EQ}
\Gamma^{A}\colon \simp \to \TT[2]\Rfd(E)
\end{equation}
which assigns to $\sigma\in \simp$ the cofibration sequence (\ref{GAMMA COF SEQ EQ}).
Applying Waldhausen's pre-additivity theorem (\ref{WAL ADD THM}) to $\Gamma^{A}$ 
we obtain a homotopy between $p^{A}$ and the map
$j'_{1\ast}q^{A}_{1}+j'_{2\ast}q^{A}_{1} + g^{A}$ where $g^{A}$ is the map induced by the functor
$$G^{A}\colon \simp\to \TT[1]\Rfd(E)$$ 
given by 
\begin{equation}
\label{GA EQ}
G^{A}(\sigma):=(E\sqcup (\sigma^{\ast}E'_{1}\sqcup \sigma^{\ast} E'_{2})\ratail E\sqcup \sigma^{\ast}E)
\end{equation}
It remains to check that $g^{A}$ represents the  homotopy type of  $-j_{0\ast}p_{0}^{A}$. 
This can be seen by noticing 
that the cofiber of the cofibration $G^{A}(\sigma)$ is isomorphic to the object of $\Rfd(E)$ obtained by 
applying the suspension functor \cite[1.6]{Wal} to the cofiber of $\oF_{p_{0}^{A}}(\sigma)$.

\vskip 3mm \noindent {\em Part (ii).} As we have mentioned in (\ref{TR LIN NN}) the space 
$\tQ(E_{+})$ is constructed by applying the $\TTc$-construction to the category of partitions 
of the manifold $E$.  The map $g^{Q}$ and the homotopy $\gamma^{Q}$ can be obtained 
by arguments paralleling the ones used in the proof of the part (i), working in the category of partitions 
instead of $\Rfd(E)$.

\end{proof}

\vskip 3mm
\begin{proof}[Proof of Theorem \ref{ADD THM}]
We will use the notation introduced in the proof of Theorem \ref{ADD TRANSFER THM}.
Using diffeomorphisms between the bundles $p_{i}$ and $q_{i}$, $i=1,2$ we obtain homotopies
$$j_{i\ast}\taus(p_{i})\simeq j'_{i\ast}\taus(q_{i})$$
Therefore it will suffice to show that we have a homotopy 
\begin{equation}
\label{ADD MOD EQ}
\tau^{s}(p)\simeq  j'_{1\ast}\taus(q_{1})+j'_{2\ast}\taus(q_{2}) + \bar g^{\wh}
\end{equation}

A  convenient description of the maps  $ j'_{i\ast}\taus(q_{i})$, $i=1, 2$ can be obtained as follows. 
For $i=1, 2$ we have a commutative diagram
$$
\xymatrix{
\tQ(E'_{i+})\ar[rr]^{j'_{i\ast}}\ar[dr]_{\lambda_{E'_{i}}} & & \tQ(E_{+}) \ar[ld]^{\lambda_{E}}\\
& K(\R) & \\
}
$$
It follows that 
\begin{equation}
\label{LQ EQ}
\lambda_{E}j'_{i\ast}q_{i}^{!}=\lambda_{E'_{i}}q^{!}_{i}=c_{q_{i}}
\end{equation}
Let $F'_{i}$ be the fiber of the bundle $q_{i}$. Recall (\ref{STOR DEF}) that the unreduced smooth 
torsion of $q_{i}$ is the map 
$$\tilde\tau^{s}(q_{i})\colon |\simp|\to \wh(E'_{i})_{H_{\ast}(F'_{i})}$$
determined by the transfer $q_{i}^{!}$ and the algebraic contraction $\omega_{q_{i}}$. 
From the equation (\ref{LQ EQ}) we obtain that the map 
$j_{\ast}\tilde\tau^{s}(q_{i})\colon |\simp|\to \wh(E)_{H_{\ast}(F'_{i})}$ 
is determined by the pair $(j'_{i\ast}q_{i}^{!}, \omega_{q_{i}})$. The map 
$j'_{i\ast}\taus(q_{i})$ is then just the reduction of $j'_{\ast}\tilde\tau^{s}(q_{i})$ 
(\ref{REDUCTION REM}).

The map $\bar g^{\wh}$ is constructed as follows.  Consider the functor 
$$G^{K}\colon \simp \to \SS[1]\Chfd(\R)$$ 
which assigns to $\sigma\in \simp$ the relative chain complex 
\begin{equation}
\label{GK FUNCT EQ}
G^{K}(\sigma):=C_{\ast}(\sigma^{\ast}E, \sigma^{\ast}E'_{1}\sqcup \sigma^{\ast}E'_{2})
\end{equation}
Let $g^{K}\colon |\simp|\to K(\R)$ be the map induced by $G^{K}$ as in (\ref{ST MAP NN}).
Notice that we have
$$G^{K}=\Lambda^{A}_{E, 1}\circ G^{A}$$ 
where $G^{A}$ is the functor defined by (\ref{GA EQ}) and 
$$\Lambda^{A}_{E, 1}\colon \TT[1]\Rfd(E)\to \SS[1]\Chfd(\R)$$ 
is induced by the linearization 
functor $\Lambda^{A}_{E}$ (\ref{LAMBDA FUNCTOR EQ}), cf. (\ref{ALMOST EX NN}). 
This shows that $g^{K}=\lambda^{A}_{E}\circ g^{A}$.
Since $g^{A}$ has the homotopy type of $-j_{0\ast}p_{0}^{A}$, 
$\lambda^{A}_{E}$ is a map of infinite loop spaces, and 
$\lambda^{A}_{E}j_{0\ast}p_{0}^{A}=c_{p_{0}}$,  it follows that 
$g^{K}$ represents the homotopy type of $-c_{p_{0}}$. 

Recall that $F'_{1}$, $F'_{2}$ are the fibers of $q_{1}$, $q_{2}$, respectively, and let $F$  denote the fiber 
of $p$. One can construct a homotopy $\omega_{g}$ from  $g^{K}$ to a constant map 
$\bast_{H_{\ast}(F, F'_{1}\sqcup F'_{2})}$ following the same steps which we used to obtain the algebraic 
contraction $\omega_{p}$ in (\ref{ALG CONTR NN}). More precisely, let 
$$G^{H}, G^{H_{0}}\colon \simp\to \SS[1]\Chfd(\R)$$ 
be functors given by
$$G^{H}(\sigma):= H_{\ast}(\sigma^{\ast}E, \sigma^{\ast}E'_{1}\sqcup \sigma^{\ast}E'_{2})$$
and 
$$G^{H_{0}}(\sigma):=H_{\ast}(F_{\sigma(0)}, F'_{1,\sigma(0)}\sqcup F'_{2,\sigma(0)})$$
where $F_{\sigma(0)}$, $F'_{1,\sigma(0)}$,  $F'_{2,\sigma(0)}$ are the fibers of $p$, $q_{1}$, $q_{2}$
respectively taken over the zeroth vertex $\sigma(0)$ of the singular simplex $\sigma$.
Let
$$g^{H}, g^{H_{0}}\colon |\simp|\to K(\R)$$ 
be the maps induced by $G^{H}$ and $G^{H_{0}}$.
A homotopy $\omega^{(1)}_{g}$ between $g^{K}$ and $g^{H}$ can be obtained the same way as 
the homotopy $\omega^{(1)}_{p}$  (cf.  Remark \ref{OMEGA1 REM}). 
The natural isomorphisms 
$$H_{\ast}(\sigma^{\ast}E, \sigma^{\ast}E'_{1}\sqcup \sigma^{\ast}E'_{2})\overset{\cong}{\lra}
H_{\ast}(F_{\sigma(0)}, F'_{1,\sigma(0)}\sqcup F'_{2,\sigma(0)})
$$
induce a homotopy $\omega^{(2)}_{g}$ between $g^{H}$ and $g^{H_{0}}$. Finally, 
since $\pi_{1}B$ acts unipotently on $H_{\ast}(F, F'_{1}\sqcup F'_{2})$ we also have a homotopy
$\omega^{(3)}_{g}$ between $g^{H_{0}}$ and the constant map
$\bast_{H_{\ast}(F, F'_{1}\sqcup F'_{2})}$. Concatenation 
of the homotopies $\omega^{(1)}_{g}$, $\omega^{(2)}_{g}$, $\omega^{(3)}_{g}$
gives the homotopy $\omega_{g}$.

Notice that 
$$g^{K}=\lambda^{A}_{E}g^{A} = \lambda_{E}g^{Q}$$
It follows that the homotopy $\omega_{g}$ determines a lift of $g^{Q}$:
$$g^{\wh}\colon |\simp|\to \wh(E)_{H_{\ast}(F, F'_{1}\sqcup F'_{2})}$$ 
Let $\bar{g}^{\wh}\colon |\simp|\to \wh(E)$ be the map obtained by reducing
$g^{\wh}$.  One can check that  $\bar{g}^{\wh}$ represents the homotopy type of 
$-j_{0\ast}\tau^{s}(p_{0})$.

In order to construct the homotopy (\ref{ADD MOD EQ})
 we will follow the steps described in  Remark \ref{REDUCTION REM}. 
In the notation of (\ref{REDUCTION REM}) the map $\taus(p)$ is  the reduction of $\ttaus(p)$
and this last map is determined by the pair $(p^{!}, \omega_{p})$. In the same way the map $ j'_{1\ast}
\taus(q_{1})+j'_{2\ast}\taus(q_{2}) + \bar g^{\wh}$ comes from the pair
$$(j'_{1\ast}q_{1}^{!}+j'_{1\ast}q_{1}^{!}+g^{Q}, \omega_{q_{1}}+\omega_{q_{2}}+\omega_{g})$$
Theorem \ref{ADD TRANSFER THM} provides the homotopy $\gamma^{Q}$ between 
$p^{!}$ and $j'_{1\ast}q_{1}^{!}+j'_{1\ast}q_{1}^{!}+g^{Q}$. It follows that to get the homotopy 
(\ref{ADD MOD EQ}) it will suffice to show that 
\begin{itemize}
\item[--] there exists a path $\gamma$ in $K(\R)$ joining the points 
$H_{\ast}(F)$ and $H_{\ast}(F'_{1}) + H_{\ast}(F'_{2}) + H_{\ast}(F, F'_{1}\sqcup F'_{2})$, and
\item[--] there exists a homotopy of homotopies filling the following diagram 
\begin{equation}
\label{ADD HOM HOM EQ}
\xymatrix{
c_{p}\ar[r]^(.3){\lambda_{E} \gamma^{Q}} \ar[d]_{\omega_{p}} & 
c_{q_{1}} + c_{q_{2}}+ g^{K}\ar[d]^{\omega_{q_{1}}+\omega_{q_{2}}+\omega_{g}} \\
\bast_{H_{\ast}(F)}\ar[r]_(.26){\gamma} & 
\bast_{H_{\ast}(F'_{1}) + H_{\ast}(F'_{2}) + H_{\ast}(F, F'_{1}\sqcup F'_{2})} \\
}
\end{equation}
In  this diagram every vertex represents a map $|\simp|\to K(\R)$ 
and every edge represents of homotopy of such maps. 
\end{itemize}

We will first give a combinatorial desctiption of the homotopy $\lambda_{E}\gamma^{Q}$. 
Using Theorem \ref{ADD TRANSFER THM} we get 
$$\lambda_{E}\gamma^{Q}=\lambda^{A}_{E}\ta_{E}\gamma^{Q}=\lambda^{A}_{E}\gamma^{A}$$
Recall that the homotopy $\gamma^{A}$ was obtained applying Waldhausen's pre-additivity theorem
to the functor $\Gamma^{A}$ (\ref{GAMMA FUNCT EQ}). 
Recall also  (\ref{LIN NN}) that the map $\lambda^{A}_{E}$ 
was defined using the linearization functor 
$\Lambda^{A}_{E}\colon \Rfd(E)\to \Chfd(\R)$ (\ref{LAMBDA FUNCTOR EQ}). Let 
$$\Lambda^{A}_{E,2}\colon \TT[2]\Rfd(E)\to \SS[2]\Chfd(\R)$$
be the functor induced by $\Lambda^{A}_{E}$ (see \ref{ALMOST EX NN}). The composition 
$$\Lambda^{A}_{E,2}\circ \Gamma^{A}\colon \simp\to \SS[2]\Chfd(\R)$$ 
is the functor which assigns to a singular simplex 
$\sigma$ the cofibration of chain complexes 
$$\Lambda^{A}_{E,2}\circ \Gamma^{A}(\sigma)=
(C_{\ast}(\sigma^{\ast}E'_{1}\sqcup \sigma^{\ast}E'_{2})\ratail C_{\ast}(\sigma^{\ast}E))$$
Notice that the cofiber of $\Lambda^{A}_{E,2}\circ \Gamma^{A}(\sigma)$ is the chain complex
$G^{K}(\sigma)$ (\ref{GK FUNCT EQ}). 
The homotopy $\lambda^{A}_{E}\gamma^{A}$ is obtained applying Waldhausen's 
pre-additivity theorem (\ref{WAL ADD THM}) to the functor~$\Lambda^{A}_{E,2}\circ \Gamma^{A}$.

Next, recall (\ref{ALG CONTR NN}) that the algebraic contraction $\omega_{p}$ was obtained 
as a concatenation of homotopies $\omega^{(i)}_{p}$, $i=1, 2, 3$.
Consider the following diagram:
\begin{equation}
\label{ADD HOM HOM 3 EQ}
\xymatrix{
c_{p}\ar[r]^(.3){\lambda_{E} \gamma^{Q}} \ar[d]_{\omega^{(1)}_{p}} & 
c_{q_{1}} + c_{q_{2}}+ g^{K}
\ar[d]^{\omega^{(1)}_{q_{1}}+\omega^{(1)}_{q_{2}}+\omega^{(1)}_{g}} \\
h_{p}\ar[r]^(.3){ \gamma^{H}} \ar[d]_{\omega^{(2)}_{p}} & 
h_{q_{1}} + h_{q_{2}}+ g^{H}
\ar[d]^{\omega^{(2)}_{q_{1}}+\omega^{(2)}_{q_{2}}+\omega^{(2)}_{g}} \\
h^{0}_{p}\ar[r]^(.3){\gamma^{H_{0}}} \ar[d]_{\omega^{(3)}_{p}} & 
h^{0}_{q_{1}} + h^{0}_{q_{2}}+ g^{H_{0}}
\ar[d]^{\omega^{(3)}_{q_{1}}+\omega^{(3)}_{q_{2}}+\omega^{(3)}_{g}} \\
\bast_{H_{\ast}(F)}\ar[r]_(.26){\gamma} & 
\bast_{H_{\ast}(F'_{1}) + H_{\ast}(F'_{2}) + H_{\ast}(F, F'_{1}\sqcup F'_{2})} \\
}
\end{equation}
In this diagram every vertex represents a map $|\simp|\to K(\R)$ and every edge 
stands for a homotopy of such maps. The homotopy $\gamma^{H}$ is obtained using 
the homological additivity $\mho^{H}$ as described in (\ref{HOMOLOG ADD NN}).
It is constructed using the homology long exact sequences associated to the short
exact sequences of chain complexes 
\begin{equation}
\label{ADD SES EQ}
C_{\ast}(\sigma^{\ast}E'_{1}\sqcup \sigma^{\ast}E'_{2})\to C_{\ast}(\sigma^{\ast}E)
\to C_{\ast}(\sigma^{\ast}E, \sigma^{\ast}E_{1}'\sqcup \sigma^{\ast}E_{2}')
\end{equation}
The homotopy $\gamma^{H_{0}}$ is obtained in the same manner, while the path 
$\gamma$ is just the restriction of  $\gamma^{H_{0}}$ to the basepoint $b_{0}\in B$.

In order to obtain a homotopy of homotopies filling the diagram (\ref{ADD HOM HOM EQ})
it is enough to show that each of the three squares in the diagram (\ref{ADD HOM HOM 3 EQ})
can be filled with a homotopy of homotopies. In the case of the top square this holds by 
Lemma \ref{ADD THETA LEMMA} (see also \ref{HOMOLOG ADD NN}). 
Existence of a homotopy of homotopies filling the middle square is trivial. 
Finally, a homotopy of homotopies fitting in the bottom square
exists since the long exact sequence of homology groups induced by the  short exact 
sequence 
$$C_{\ast}(F_{1}'\sqcup F_{2}')\to C_{\ast}(F)\to C_{\ast}(F, F_{1}'\sqcup F_{2}')$$
is a sequence of $\pi_{1}B$-modules. 

\end{proof}


\section{The secondary transfer}
\label{SECONDARY TRANSFER SEC }

Our next objective is to develop a formula which, given two suitably good
smooth bundles $p\colon E\to B$ and  $q\colon D\to E$, 
relates the smooth torsion of the bundle $pq$ to the torsions $\taus(p)$ and $\taus(q)$.
We will do it in this section. In Section \ref{TRANSFER AXIOM SEC} we will then verify
that on the level of the cohomological torsion our formula yields the transfer axiom 
(\ref{TRANSFER AX DEF}). 

While Igusa's transfer axiom assumes that the bundle $q$ is a linear oriented sphere bundle
in our context it will be convenient to work in a more general setting: 

\begin{definition}
\label{LH BUNDLE DEF}
Let $q\colon D\to E$ be a smooth bundle with fiber $F$. For $e\in E$ let $F_{e}=p^{-1}(e)$
and let $i_{e}\colon F_{e}\to D$ be the inclusion map. We say that $q$ is a Leray-Hirsch 
bundle if there exists a homomorphism $\theta\colon H^{\ast}(F)\to H^{\ast}(D)$ such that for 
every $e\in E$ the composition 
$$H^{\ast}(F)\overset{\theta}{\lra} H^{\ast}(D)\overset{i_{e}^{\ast}}{\lra} H^{\ast}(F_{e})$$
is an isomorphism.
\end{definition}

\noindent In other words Leray-Hirsch bundles are smooth bundles which satisfy 
the Leray-Hirsch Theorem. For our purposes it will be convenient to state this theorem as follows.

\begin{theorem}[cf. {\cite[Sec. 5.7, Theorem 9]{Spanier}}]
\label{LH THM} Let $q\colon D\to E$ be a Leray-Hirsch bundle with fiber $F$.
There exists an quasi-isomorphism of singular chain complexes 
$$\alpha(q)\colon C_{\ast}(D)\to C_{\ast}(E)\otimes H_{\ast}(F)$$
Moreover, this quasi-somorphism is natural in the following sense: for any map  
$f\colon E'\to E$ the pullback diagram 
$$
\xymatrix{
f^{\ast}D \ar[d]_{q'}\ar[r]^{\bar f} & D \ar[d]^{q} \\
E' \ar[r]_{f} & E \\
}
$$
induces a commutative square
$$
\xymatrix{
C_{\ast}(f^{\ast}D) \ar[d]_{\alpha(q')}\ar[r]^{\bar f_{\ast}} & C_{\ast}(D) \ar[d]^{\alpha(q)} \\
C_{\ast}(E')\otimes H_{\ast}(F) \ar[r]_{f_{\ast}\otimes\id}  & C_{\ast}(E)\otimes H_{\ast}(F) \\
}
$$
\end{theorem}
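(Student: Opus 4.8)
The plan is to produce the quasi-isomorphism $\alpha(q)$ by choosing, once and for all, a chain-level realization of the Leray-Hirsch splitting and checking that it can be made compatible with pullbacks. First I would fix a homomorphism $\theta\colon H^{\ast}(F)\to H^{\ast}(D)$ as in Definition \ref{LH BUNDLE DEF}, together with a finite basis $\{a_{j}\}$ of the (finite-dimensional, by finite domination of $F$) graded vector space $H_{\ast}(F)$; dualizing $\theta$ and applying the universal coefficient isomorphism over $\R$ gives homology classes, and each such class can be represented by a singular cycle $z_{j}\in C_{\ast}(D)$. The classical Leray-Hirsch argument (see \cite[Sec.~5.7, Thm.~9]{Spanier}) then shows that cap-product with the $z_{j}$'s, or equivalently the map $c\otimes a_{j}\mapsto (\text{shuffle of } q_{\ast}\text{-lifts against } z_{j})$, induces an isomorphism $H_{\ast}(E)\otimes H_{\ast}(F)\to H_{\ast}(D)$. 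To get a chain map in the stated direction $C_{\ast}(D)\to C_{\ast}(E)\otimes H_{\ast}(F)$ rather than the other way, I would instead build $\alpha(q)$ directly: send a singular simplex $\sigma\colon\Delta^{n}\to D$ to $\sum_{j}\,(q\sigma)\otimes\langle\theta(a_{j}^{\vee}),\sigma\rangle\,a_{j}$, where the pairing is the evaluation of the pulled-back cohomology class $\sigma^{\ast}\theta(a_{j}^{\vee})\in H^{|a_j|}(\Delta^{n})$ against the fundamental chain — this is zero unless $|a_j|=n$, so the formula is a finite sum and is manifestly natural in $D$. A short cochain computation shows this is a chain map up to the usual Alexander–Whitney correction terms; alternatively one passes through the Eilenberg–Zilber theorem to make the target genuinely $C_{\ast}(E)\otimes H_{\ast}(F)$.

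The second step is to verify that $\alpha(q)$ is a quasi-isomorphism. Here I would filter both sides by skeleta of $E$ (or run the Leray–Serre spectral sequence of $q$ on the left and the obvious first-quadrant spectral sequence of the tensor product on the right) and check that $\alpha(q)$ induces an isomorphism on $E_{2}$-pages; the hypothesis that $i_{e}^{\ast}\circ\theta$ is an isomorphism for every $e\in E$ is exactly what forces the local coefficient system $H_{\ast}(F_{e})$ to be trivial and the $E_{2}$-map to be $H_{\ast}(E)\otimes H_{\ast}(F)\to H_{\ast}(E)\otimes H_{\ast}(F)$ the identity. Since both spectral sequences converge (finite-dimensional fibers, compact base so one may work over a finite good cover), a standard comparison argument yields that $\alpha(q)$ is a quasi-isomorphism.

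The naturality square is the third step, and given the explicit formula it is nearly automatic: for a pullback $f^{\ast}D$, a singular simplex $\sigma\colon\Delta^{n}\to f^{\ast}D$ has $\bar f_{\ast}\sigma=\bar f\circ\sigma$, and $\sigma^{\ast}(\bar f^{\ast}\theta(a_{j}^{\vee}))=(\bar f\sigma)^{\ast}\theta(a_{j}^{\vee})$, so the two ways around the square agree simplex-by-simplex once one uses the \emph{same} chosen cocycle representatives of $\theta(a_{j}^{\vee})$ on $D$ pulled back along $\bar f$. The only subtlety is that the target ``$C_{\ast}(E)\otimes H_{\ast}(F)$'' should be understood with a fixed Eilenberg–Zilber/Alexander–Whitney convention so that $f_{\ast}\otimes\id$ is strictly compatible with $\alpha$; this is routine.

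I expect the main obstacle to be the \emph{naturality}, not the existence, of $\alpha(q)$: the usual textbook Leray–Hirsch statement only produces the quasi-isomorphism up to homotopy and with no strict functoriality, whereas the later sections of the paper (the secondary transfer, Theorem \ref{SEC TRANSFER THM}) need an honest commutative square of chain complexes. Making $\alpha(q)$ strictly natural requires pinning down the cocycle representatives of $\theta(a_{j}^{\vee})$ on the total space in a way that survives arbitrary smooth pullback $f\colon E'\to E$ — e.g.\ by choosing them on a universal model or by working with a specific functorial cochain model — and then tracking the Eilenberg–Zilber maps through. This bookkeeping, rather than any deep input, is where the real work lies.
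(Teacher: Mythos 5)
The paper does not actually prove Theorem \ref{LH THM}: it is stated as a reformulation of Spanier's Leray--Hirsch theorem, with no internal argument, so there is no ``paper's own proof'' to compare against. Judged on its own merits, your sketch has the right ingredients: fix a basis $\{a_j\}$ of $H_\ast(F)$ and singular cocycle representatives $\zeta_j\in C^{|a_j|}(D)$ of the classes $\theta(a_j^\vee)$, build $\alpha(q)$ out of the Alexander--Whitney diagonal paired against the $\zeta_j$, argue the quasi-isomorphism by a Serre spectral sequence comparison (the hypothesis on $i_e^\ast\circ\theta$ is exactly what trivializes the coefficient system), and obtain strict naturality by equipping every pullback $q'\colon f^\ast D\to E'$ with the representatives $\bar f^\ast\zeta_j$, which makes the square commute simplex by simplex. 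That is the standard construction and it does what the paper needs.

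However, the explicit formula you display for $\alpha(q)$ is wrong as written. Evaluating the pulled-back \emph{cohomology class} $\sigma^\ast\theta(a_j^\vee)\in H^{|a_j|}(\Delta^n)$ gives zero whenever $|a_j|>0$, since $\Delta^n$ is contractible; so as stated the formula collapses to a map $C_\ast(D)\to C_\ast(E)\otimes H_0(F)$ and sees none of the fiber. Replacing the cohomology class by a cochain $\zeta_j$ does not repair it either: the term $(q\sigma)\otimes\langle\zeta_j,\sigma\rangle a_j$ lives in degree $n+|a_j|$, not $n$. The correct map is the Alexander--Whitney slant product: send $\sigma\colon\Delta^n\to D$ to $\sum_j q_\ast(\sigma')\otimes\langle\zeta_j,\sigma''\rangle\,a_j$, where $\sigma'$ is the front $(n-|a_j|)$-face of $\sigma$ and $\sigma''$ the back $|a_j|$-face. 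You gesture at this under ``Alexander--Whitney correction terms,'' but the AW formula \emph{is} the map, not a correction to your expression. With that fix, everything you outline goes through: it is a chain map because the $\zeta_j$ are cocycles, the naturality square commutes strictly once $\zeta_j$ is replaced by $\bar f^\ast\zeta_j$ on $f^\ast D$, and the quasi-isomorphism follows from the spectral sequence comparison.
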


\begin{remark}
\label{LH REM}
Let $\xi$ be an oriented odd dimensional vector bundle over $E$ and let 
$q\colon D\to E$ be the (even dimensional) sphere bundle associated to $\xi$. 
Then $q$ is a Leray-Hirsch bundle. 
\end{remark}

Let $p\colon E\to B$ be a unipotent bundle and  let  $q\colon D\to E$ 
be a Leray-Hirsch bundle.  As we have mentioned above we will want to  describe the relationship 
between the smooth torsions of $p$, $q$, and $pq$. First, we need to verify that 
$\taus(pq)$ is defined in this case, i.e. that $pq$ is a unipotent bundle. 
This is an immediate consequence of the following 

\begin{lemma}
\label{LH EQUIV LEMMA}
Let $p\colon E\to B$ and $q\colon D\to E$ be smooth bundles with fibers $F_{p}$ and $F_{q}$ respectively, 
and let $F_{pq}$ be the fiber of the bundle $pq\colon D\to B$. Define a $\pi_{1}B$-module structure on 
$H_{\ast}(F_{p})\otimes H_{\ast}(F_{q})$ by
$$\gamma\cdot (x\otimes y):=(\gamma\cdot x)\otimes y$$
for $\gamma\in \pi_{1}B$, $x\otimes y\in H_{\ast}(F_{p})\otimes H_{\ast}(F_{q})$.
If $q$ is a Leray-Hirsch bundle then the Leray-Hirsch isomorphism
$$\alpha(q|_{F_{pq}})\colon H_{\ast}(F_{pq})\to H_{\ast}(F_{p})\otimes H_{\ast}(F_{q})$$
is a $\pi_{1}B$-equivariant map.
\end{lemma}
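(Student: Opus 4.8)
The plan is to identify $F_{pq}$ with the total space of the bundle $\bar q:=q|_{F_{pq}}\colon F_{pq}\to F_p$, which is precisely the pullback of $q$ along the inclusion $\iota\colon F_p\hra E$ (indeed $F_{pq}=(pq)^{-1}(b_0)=q^{-1}(F_p)$), and then to show that the Leray--Hirsch quasi-isomorphism $\alpha(\bar q)=\alpha(q|_{F_{pq}})$ of Theorem \ref{LH THM} intertwines the monodromy action of $\pi_1 B$ on $H_\ast(F_{pq})$ with the action $\psi_{\gamma\ast}\otimes\id$ on $H_\ast(F_p)\otimes H_\ast(F_q)$, where $\psi_\gamma$ denotes the monodromy of $p$. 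First I would record two preliminaries: (a) a pullback of a Leray--Hirsch bundle is again Leray--Hirsch (pull the class $\theta$ back along $\bar f$ and use the identification of fibres in the pullback square), so $\bar q$ is a Leray--Hirsch bundle and $\alpha(\bar q)$ is defined; and (b) the Leray--Hirsch condition for $q$ forces the local system $e\mapsto H_\ast(q^{-1}(e))$ on $E$ to be trivial, which is why the claimed action is trivial on the second tensor factor.

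The crux is a compatibility statement for monodromy in the tower $D\overset{q}{\lra}E\overset{p}{\lra}B$. Fix $\gamma\in\pi_1(B,b_0)$ and let $\phi_\gamma\colon F_{pq}\to F_{pq}$ and $\psi_\gamma\colon F_p\to F_p$ be the monodromy diffeomorphisms of $pq$ and of $p$. Since $q$ is a morphism of bundles over $B$, trivializing $p$ and $pq$ over $\gamma$ compatibly shows that $\phi_\gamma$ covers $\psi_\gamma$ (i.e.\ $\bar q\circ\phi_\gamma\simeq\psi_\gamma\circ\bar q$), and in fact that it does so ``trivially in the fibre direction'': trivializing $p$ over $\gamma$ exhibits $\iota\colon F_p\hra E$ as freely homotopic to $\iota\circ\psi_\gamma$, and pulling $q$ back along such a homotopy $K\colon F_p\times I\to E$ produces a Leray--Hirsch bundle $K^\ast q$ over $F_p\times I$ whose restrictions to the two ends are $\bar q$ and $\psi_\gamma^\ast\bar q$; as a bundle over $F_p\times I$ it is pulled back from $F_p\times\{0\}$, so the covering homotopy yields a canonical bundle isomorphism $c_\gamma$ between $\psi_\gamma^\ast\bar q$ and $\bar q$ over $F_p$. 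Unwinding the construction of $\phi_\gamma$ then gives, up to the canonical identifications, $\phi_\gamma\simeq\tilde\psi_\gamma\circ c_\gamma$, where $\tilde\psi_\gamma\colon\psi_\gamma^\ast F_{pq}\to F_{pq}$ is the structure map of the pullback square of $\psi_\gamma$.

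With this in hand the conclusion follows from two applications of the naturality clause of Theorem \ref{LH THM}. Applying it to the bundle $K^\ast q$ over $F_p\times I$ and to the two end-inclusions $F_p\hra F_p\times I$ --- which are homotopic, hence agree on homology and are homotopy equivalences --- yields $\alpha(\psi_\gamma^\ast\bar q)_\ast=\alpha(\bar q)_\ast\circ c_{\gamma\ast}$ on homology (both ends compute the restriction of $\alpha(K^\ast q)$ along a homotopy equivalence of total spaces). Applying it to the pullback square of $\psi_\gamma$ yields $\alpha(\bar q)_\ast\circ\tilde\psi_{\gamma\ast}=(\psi_{\gamma\ast}\otimes\id)\circ\alpha(\psi_\gamma^\ast\bar q)_\ast$. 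Composing these with the factorization of $\phi_\gamma$ gives
$$\alpha(\bar q)_\ast\circ\phi_{\gamma\ast}=(\psi_{\gamma\ast}\otimes\id)\circ\alpha(\bar q)_\ast,$$
which is exactly the asserted $\pi_1 B$-equivariance, $\gamma$ being arbitrary.

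The main obstacle is the middle step: the standard but fiddly ``monodromy is only well defined up to homotopy'' bookkeeping for a tower of fibre bundles, and in particular verifying cleanly that $\phi_\gamma$ covers $\psi_\gamma$ through the homotopically trivial covering $c_\gamma$, so that Leray--Hirsch collapses its effect on homology to $\psi_{\gamma\ast}\otimes\id$. One way to streamline this is to first upgrade the naturality in Theorem \ref{LH THM}: since it is a homology statement it extends from strict pullback squares to squares whose top arrow is merely a fibre homotopy equivalence covering the bottom arrow; granting that, the equivariance is immediate from the factorization of $\phi_\gamma$ and no separate discussion of $c_\gamma$ is required.
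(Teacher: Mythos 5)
The paper does not prove Lemma~\ref{LH EQUIV LEMMA}; it is stated and then used (e.g.\ in the proof of Theorem~\ref{SEC TRANSFER THM}) without any argument. So there is no ``paper proof'' to compare against, and your proposal has to be judged on its own.

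Your argument is essentially correct. The two load-bearing ideas --- that the monodromy $\phi_\gamma$ of $pq$ covers the monodromy $\psi_\gamma$ of $p$, and that the Leray--Hirsch data lives on all of $D$ so that the ``fiber-direction'' part of $\phi_\gamma$ is invisible to the Leray--Hirsch identification --- are exactly what is needed, and the two applications of the naturality clause in Theorem~\ref{LH THM} do combine to yield $\alpha(\bar q)_\ast\circ\phi_{\gamma\ast}=(\psi_{\gamma\ast}\otimes\id)\circ\alpha(\bar q)_\ast$. There is, however, a small bookkeeping inconsistency: if $c_\gamma$ is to satisfy $\phi_\gamma\simeq\tilde\psi_\gamma\circ c_\gamma$ then it must go $F_{pq}\to\psi_\gamma^\ast F_{pq}$, and with that orientation the first naturality step gives $\alpha(\bar q)_\ast=\alpha(\psi_\gamma^\ast\bar q)_\ast\circ c_{\gamma\ast}$ rather than the relation you wrote. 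The final chain of equalities still closes up, so this is a sign/direction slip rather than a substantive gap, but it should be fixed.

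You may also wish to know that the ``fiddly middle step'' you flag can be avoided entirely by working with the cohomology extension of the fiber instead of the chain-level $\alpha$. Let $\theta\colon H^\ast(F_q)\to H^\ast(D)$ be the Leray--Hirsch class for $q$ and $j\colon F_{pq}\hra D$ the inclusion; then the Leray--Hirsch isomorphism for $\bar q$ is $a\otimes b\mapsto \bar q^\ast(a)\cup j^\ast\theta(b)$. Since $\phi_\gamma$ is a monodromy of $pq\colon D\to B$, the composite $j\circ\phi_\gamma$ is homotopic to $j$ \emph{in $D$}, so $\phi_\gamma^\ast j^\ast\theta=j^\ast\theta$; and since $\bar q\circ\phi_\gamma\simeq\psi_\gamma\circ\bar q$, we get $\phi_\gamma^\ast\bar q^\ast=\bar q^\ast\psi_\gamma^\ast$. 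Hence $\phi_\gamma^\ast(\bar q^\ast a\cup j^\ast\theta(b))=\bar q^\ast(\psi_\gamma^\ast a)\cup j^\ast\theta(b)$, which is the cohomological statement of the equivariance; dualizing gives the lemma with no mention of $c_\gamma$, covering homotopies, or the bundle over $F_p\times I$. Both routes are valid; yours is a homology-level version of the same mechanism but costs more monodromy bookkeeping.
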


\noindent Since the identity map $\id_{E}\colon E\to E$ defines a unipotent bundle, 
Lemma \ref{LH EQUIV LEMMA} implies in particular that any Leray-Hirsch bundle is unipotent. 

Next, let $F$ be the fiber of a smooth bundle $q$. Consider the functor 
$$\otimes H_{\ast}(F)\colon \Chfd(\R) \to \Chfd(\R)$$
which assigns to a chain complex $C$ the complex $C\otimes H_{\ast}(F)$. 
This is an exact functor of Waldhausen 
categories, so it induces a map 
$$\otimes H_{\ast}(F)\colon K(\R)\to K(\R)$$

\noindent We have the following

\begin{proposition}
\label{SEC TRANSFER PROP}
Let $q\colon D\to E$ be a Leray-Hirsch bundle with fiber $F$. 
The diagram
$$
\xymatrix{
\tQ(E_{+}) \ar[r]^{\tQ(q^{!})}\ar[d]_{\lambda_{E}} & \tQ(D_{+})\ar[d]^{\lambda_{D}} \\
K(\R) \ar[r]_{\otimes H_{\ast}(F)} & K(\R)
}
$$
commutes up to a preferred homotopy
$$\mu_{q}\colon \tQ(E_{+})\times I \to K(\R)$$
\end{proposition}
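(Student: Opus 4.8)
The plan is to route the comparison through the assembly maps and reduce it to a coherence statement about Waldhausen categories, where the Leray--Hirsch Theorem~\ref{LH THM} does the work. Recall from (\ref{LIN NN}) that $\lambda_{E}=\lambda^{A}_{E}\circ\ta_{E}$ and $\lambda_{D}=\lambda^{A}_{D}\circ\ta_{D}$. So it is enough to (i) lift the Becker--Gottlieb transfer $\tQ(q^{!})$, compatibly with the assembly maps, to a transfer in $A$-theory, and then (ii) identify that $A$-theoretic transfer, after linearization, with $\otimes H_{\ast}(F)$. For this, consider the pullback functor
$$q^{\ast}\colon \Rfd(E)\to\Rfd(D),\qquad q^{\ast}(X):=X\times_{E}D .$$
Because $q$ is a fibre bundle with finitely dominated fibre, $q^{\ast}$ preserves finite domination, carries Serre cofibrations to Serre cofibrations and weak equivalences to weak equivalences, and sends a pushout along a cofibration to a weak equivalence of the corresponding pushout; hence it is almost exact (\ref{ALMOST EX DEF}) and induces a map $q^{A}\colon A(E)\to A(D)$.

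\emph{Step 1.} I would first produce a preferred homotopy
$$\ta_{D}\circ\tQ(q^{!})\ \simeq\ q^{A}\circ\ta_{E}\colon \tQ(E_{+})\to A(D).$$
This is the parametrized form of the smooth index theorem of Dwyer--Weiss--Williams: for a \emph{smooth} bundle the Becker--Gottlieb transfer is refined by the wrong-way map in $A$-theory. In the combinatorial framework used here it comes out of the constructions of \cite{BDW} (and \cite{DWW}) by the same mechanism that makes the triangle (\ref{pA EQ}) commute --- both $\tQ(q^{!})$ and $q^{A}$ are built from a fibrewise triangulation of $q$, and $q^{\ast}$ restricts, along the functor $\simp\to\Rfd(E)$ implicit in (\ref{TR LIN NN}), to the functor defining the $A$-theoretic transfer. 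I expect this to be the main obstacle: its analogue fails for general fibrations (see the Remark after (\ref{pA EQ})), so smoothness of $q$ must enter essentially, and one has to check the homotopy so obtained is canonical.

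\emph{Step 2.} Next I would show that the square of Waldhausen categories
$$
\xymatrix{
\Rfd(E)\ar[r]^{q^{\ast}}\ar[d]_{\Lambda^{A}_{E}} & \Rfd(D)\ar[d]^{\Lambda^{A}_{D}} \\
\Chfd(\R)\ar[r]_{\otimes H_{\ast}(F)} & \Chfd(\R)
}
$$
commutes up to a natural weak equivalence. For $X\in\Rfd(E)$, pulling $q$ back along the structure map $X\to E$ gives a bundle $q|_{X}\colon X\times_{E}D\to X$, which is again Leray--Hirsch (the splitting $\theta\colon H^{\ast}(F)\to H^{\ast}(D)$ pulls back to one over $X\times_{E}D$); so Theorem~\ref{LH THM} supplies a quasi-isomorphism
$$\alpha(q|_{X})\colon \Lambda^{A}_{D}(q^{\ast}X)=C_{\ast}(X\times_{E}D)\ \overset{\simeq}{\lra}\ C_{\ast}(X)\otimes H_{\ast}(F)=\bigl(\otimes H_{\ast}(F)\bigr)\Lambda^{A}_{E}(X).$$
The naturality clause of Theorem~\ref{LH THM}, applied to maps over $E$ and to the fixed $\theta$ used to build $\alpha$, makes $\alpha(q|_{-})$ a natural transformation of functors $\Rfd(E)\to\Chfd(\R)$ through weak equivalences. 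Passing to the $\TT$- and $\SS$-constructions and invoking (\ref{ALMOST EX NN}), this natural weak equivalence yields a preferred homotopy
$$\lambda^{A}_{D}\circ q^{A}\ \simeq\ (\otimes H_{\ast}(F))\circ\lambda^{A}_{E}\colon A(E)\to K(\R).$$

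\emph{Conclusion.} Concatenating --- post-composing the homotopy of Step 1 with $\lambda^{A}_{D}$ and pre-composing that of Step 2 with $\ta_{E}$ --- gives
\begin{multline*}
\lambda_{D}\circ\tQ(q^{!})=\lambda^{A}_{D}\circ\ta_{D}\circ\tQ(q^{!})\ \simeq\ \lambda^{A}_{D}\circ q^{A}\circ\ta_{E}\\
\simeq\ (\otimes H_{\ast}(F))\circ\lambda^{A}_{E}\circ\ta_{E}=(\otimes H_{\ast}(F))\circ\lambda_{E},
\end{multline*}
which is the desired homotopy $\mu_{q}$, and it is preferred because each constituent is. Apart from Step 1, the only points needing care are the almost-exactness of $q^{\ast}$ and the upgrade of the naturality in Theorem~\ref{LH THM} to naturality over $\Rfd(E)$, both routine inspections of the constructions involved.
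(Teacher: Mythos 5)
Your proposal follows essentially the same route as the paper: factor the square through $A$-theory via the assembly maps, and fill the bottom square using the Leray--Hirsch quasi-isomorphism $\alpha(q|_{X})$ as a natural transformation between the two linearization-type functors on $\Rfd(E)$. Your Step~2 matches the paper's argument almost verbatim. The one place where your framing deviates is Step~1: you flag the compatibility $\ta_{D}\circ\tQ(q^{!})\simeq A(q^{!})\circ\ta_{E}$ as the main obstacle and compare it to the index-theoretic statement in the remark after~(\ref{pA EQ}), but that remark concerns a genuinely different diagram (the triangle relating $p^{!}$, $\ta_{E}$, and $p^{A}$ for a single bundle, where smoothness of $p$ is essential). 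The compatibility you need is instead a formal consequence of the combinatorial constructions in \cite{BDW}: both $\tQ(q^{!})$ and $A(q^{!})$ are built from the same partition/pullback data, and the assembly maps intertwine them on the nose; the paper dispatches this with a single sentence. Smoothness of $q$ is of course needed to \emph{define} $\tQ(q^{!})$, but not to establish the top square. So your proof is correct, just slightly more cautious than necessary on that point.
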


\begin{proof} Consider the diagram 
\begin{equation}
\label{A TR DIAG}
\xymatrix{
\tQ(E_{+}) \ar[r]^{\tQ(q^{!})}\ar[d]_{\ta_{E}} & \tQ(D_{+})\ar[d]^{\ta_{D}} \\
A(E) \ar[r]^{A(q^{!})}\ar[d]_{\lambda^{A}_{E}} & A(D)\ar[d]^{\lambda^{A}_{D}} \\
K(\R) \ar[r]_{\otimes H_{\ast}(F)} & K(\R)
}
\end{equation}
where $\ta_{E}, \ta_{D}$ are the assembly maps and $\lambda^{A}_{E}, \lambda^{A}_{D}$ are 
the $A$-theory linearization maps (see \ref{LIN NN}). Recall that we have 
$\lambda_{E}=\lambda^{A}_{E}\circ \ta_{E}$ and  $\lambda_{D}=\lambda^{A}_{D}\circ \ta_{D}$.
The map $A(q^{!})$ is the $A$-theory transfer of $q$. It is induced by the exact functor
$\Rfd(E)\to \Rfd(D)$ which associates to a retractive space $X\in \Rfd(E)$ the space 
$$q^{\ast}X=\lim(X\to E \overset{q}{\la} D)$$

Directly from the constructions of \cite[Sec. 3]{BDW} it follows that the upper square in the diagram 
(\ref{A TR DIAG}) commutes. Consequently, it will suffice to construct a homotopy 
$$\mu^{A}_{q}\colon A(E)\times I\to K(\R)$$
which makes the lower square commute. We will then define
$$\mu_{q}:=\mu^{A}_{q}\circ(\ta_{E}\times \id)$$

Notice that the map $\lambda^{A}_{D}\circ\nolinebreak A(q^{!})$ is induced by the 
functor 
$$F\colon \Rfd(E)\to \Chfd(\R)$$
which assigns to a retractive space $X\in \Rfd(E)$ the singular chain complex 
$C_{\ast}(q^{\ast}X)$. The map $\otimes H_{\ast}(F)\circ \lambda^{A}_{E}$, on the other hand, comes 
from the functor 
$$G\colon \Rfd(E)\to \Chfd(\R)$$
such that $G(X)= C(X)\otimes H_{\ast}(F)$. 

Notice also that for any $X\in \Rfd(E)$ the map $q^{\ast}X\to X$ is a Leray-Hirsch bundle induced 
from $q\colon D\to E$. By Theorem \ref{LH THM} we have a quasi-isomorphism 
$$\alpha(q^{\ast}X\to X)\colon C_{\ast}(q^{\ast}X)\to C_{\ast}(X)\otimes H_{\ast}(F)$$
 By naturality of the quasi-isomorphisms $\alpha(-)$
this defines a natural transformation
$$\alpha\colon F\Rightarrow G$$
This natural transformation induces the desired homotopy $\mu^{A}_{q}$.

\end{proof}

\begin{definition}
\label{SEC TRANSFER DEF} 
Let $q\colon D\to E$ be a Leray-Hirsch bundle with fiber $F$.
The homotopy   $\mu_{q}$ defines a map $\wh(q^{!})\colon \wh(E)\to \wh(D)$ 
such that we get a homotopy commutative diagram
$$
\xymatrix{
\wh(E)\ar[d] \ar[r]^{\wh(q^{!})} & \wh(D) \ar[d] \\
\tQ(E_{+}) \ar[r]^{\tQ(q^{!})}\ar[d]_{\lambda_{E}} & \tQ(D_{+})\ar[d]^{\lambda_{D}} \\
K(\R) \ar[r]_{\otimes H_{\ast}(F)} & K(\R)
}
$$
We will call $\wh(q^{!})$ the secondary transfer of $q$.
\end{definition}

\begin{remark}
\label{CHI MULT REM}
The following observation will be useful later on.
Let $F$ be the fiber of  a Leray-Hirsch bundle $q\colon D\to E$ and let $\chi(F)\in\Z$ 
be the Euler characteristic of $F$. Notice that in the infinite loop space structure on 
$K(\R)$ the map $\otimes H_{\ast}(F)\colon K(\R)\to K(\R)$ represents multiplication 
by $\chi(F)$. As a consequence we have a homotopy commutative diagram
$$
\xymatrix{
\Omega K(\R) \ar[r]^{\cdot \chi(F)} \ar[d] & \Omega K(\R) \ar[d] \\
\wh(E) \ar[r]_{\wh(q^{!})} & \wh(D) \\
}
$$
\end{remark}

For our purposes the key property of the secondary transfer is given by the following

\begin{theorem}
\label{SEC TRANSFER THM}
Let $q\colon D\to E$ be a Leray-Hirsch bundle  and let $p\colon E\to B$
be a unipotent bundle.  The following diagram commutes up to homotopy
$$
\xymatrix{
& \wh(E)\ar[d]^{\wh(q^{!})} \\
|\simp|\ar[ur]^{\taus(p)}\ar[r]_{\taus(pq)} & \wh(D) \\
}
$$
\end{theorem}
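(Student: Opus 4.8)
\textbf{Proof strategy for Theorem \ref{SEC TRANSFER THM}.}
The plan is to compare the two composites $\wh(q^{!})\circ\taus(p)$ and $\taus(pq)$ by unwinding each one into its two defining ingredients --- a lift problem in the homotopy fiber sequence $\wh(D)\to\tQ(D_{+})\xrightarrow{\lambda_{D}}K(\R)$ --- and then matching the ingredients separately. A map $|\simp|\to\wh(D)$ is, up to homotopy, the same data as a pair $(f,\omega)$ with $f\colon|\simp|\to\tQ(D_{+})$ and $\omega$ a nullhomotopy of $\lambda_{D}f$, together with a reduction as in Remark \ref{REDUCTION REM}. So I would first identify, for each of the two composites, the underlying transfer map into $\tQ(D_{+})$ and the underlying algebraic contraction into $K(\R)$, and then verify the three bulleted conditions of Remark \ref{REDUCTION REM}: a homotopy between the two transfer maps, a path in $K(\R)$ between the two basepoint chain complexes, and a homotopy of homotopies filling the resulting square.

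First I would handle the transfer maps. The composite $\wh(q^{!})\circ\taus(p)$ lies, by Definition \ref{SEC TRANSFER DEF}, over $\tQ(q^{!})\circ p^{!}\colon|\simp|\to\tQ(D_{+})$, and this equals $(pq)^{!}$ by the standard composition law for the Becker--Gottlieb transfer (the transfer of a composite of smooth bundles is the composite of the transfers --- this is part of the characterization in \cite{Becker-Schultz} referenced earlier). Thus the underlying transfer map of $\wh(q^{!})\circ\taus(p)$ agrees, up to a preferred homotopy, with $(pq)^{!}$, which is precisely the transfer underlying $\taus(pq)$. This gives the first bullet of Remark \ref{REDUCTION REM}.

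Next comes the comparison of the algebraic contractions, which I expect to be the heart of the argument. The contraction underlying $\wh(q^{!})\circ\taus(p)$ is the concatenation of $\mu_{q}$ applied to $p^{!}$ with $(\otimes H_{\ast}(F_q))$ applied to the algebraic contraction $\omega_{p}$; the latter ends at the point $H_{\ast}(F_{p})\otimes H_{\ast}(F_{q})\in K(\R)$. The contraction underlying $\taus(pq)$ is $\omega_{pq}$, ending at $H_{\ast}(F_{pq})$. The required path $\gamma$ in $K(\R)$ from $H_{\ast}(F_{pq})$ to $H_{\ast}(F_{p})\otimes H_{\ast}(F_{q})$ is supplied by the Leray--Hirsch quasi-isomorphism $\alpha(q|_{F_{pq}})$ of Theorem \ref{LH THM}; by Lemma \ref{LH EQUIV LEMMA} it is $\pi_{1}B$-equivariant, so it survives the reduction and is compatible with the filtrations used in $\omega^{(3)}$. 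To fill the square with a homotopy of homotopies I would work stage by stage through the three-part structure of $\omega_{pq}$ recalled in (\ref{ALG CONTR NN}), exactly as in the proof of Theorem \ref{ADD THM}: compare $\omega^{(1)}_{pq}$ with $\mu_{q}$ followed by $(\otimes H_{\ast}(F_q))\omega^{(1)}_{p}$ using naturality of the passage to homology and of the quasi-isomorphism $\alpha(-)$ (Theorem \ref{LH THM}); for $\omega^{(2)}$ the comparison is essentially trivial, being induced by the compatible systems of isomorphisms to the fibers over zeroth vertices together with naturality of $\alpha$ under the inclusion of a fiber; and for $\omega^{(3)}$ one uses that $\alpha(q|_{F_{pq}})$ intertwines the unipotent filtrations on $H_{\ast}(F_{pq})$ and on $H_{\ast}(F_{p})\otimes H_{\ast}(F_{q})$ together with Waldhausen's pre-additivity theorem (\ref{WAL ADD THM}), just as $\omega^{(3)}_{p}$ was built in \cite{BDW}.

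The main obstacle will be organizing the homotopy of homotopies at the $\omega^{(1)}$ stage: one must check that the nullhomotopy coming from the chain-level Leray--Hirsch equivalence $\alpha(q^{\ast}X\to X)$, which underlies $\mu_{q}$ via Proposition \ref{SEC TRANSFER PROP}, is compatible --- on passing to homology and through the spectral-sequence-type identification of $\omega^{(1)}$ in Remark \ref{OMEGA1 REM} --- with the identification $H_{\ast}(F_{pq})\cong H_{\ast}(F_{p})\otimes H_{\ast}(F_{q})$. This is a naturality statement for the Leray--Hirsch construction with respect to taking homology, and it is exactly the sort of coherence that Theorem \ref{LH THM}'s naturality clause is designed to deliver; the remaining work is bookkeeping of concatenations of homotopies, which I would package using the reduction lemma (Remark \ref{REDUCTION REM}) rather than tracking basepoints by hand.
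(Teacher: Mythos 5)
Your proposal follows essentially the same route as the paper's proof: both unwind the two composites into (transfer, nullhomotopy) pairs via Remark \ref{REDUCTION REM}, dispose of the transfer part with the composition law $\tQ(q^{!})\circ p^{!}=(pq)^{!}$, take the Leray--Hirsch quasi-isomorphism $\alpha(q|_{F_{pq}})$ as the connecting path in $K(\R)$, and fill the resulting square with a homotopy of homotopies stage by stage through $\omega^{(1)},\omega^{(2)},\omega^{(3)}$. The one place where you slightly mischaracterize the work is the $\omega^{(1)}$ square: filling it is not a consequence of the naturality clause of Theorem \ref{LH THM} alone --- in the paper it also requires Lemma \ref{TENSOR LEMMA}, the statement that the homotopy $\Theta$ from $k$ to $k\circ|H|$ commutes up to a further homotopy of homotopies with the exact functor $\otimes H_{\ast}(F_{q})$, which is a structural property of the construction of $\Theta$ rather than a property of the Leray--Hirsch map; you correctly flag this as the main obstacle, but attribute it to the wrong source.
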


\begin{proof}
Let $F_{p}$, $F_{q}$ and $F_{pq}$ be the fibers of the bundles $p$, $q$, and $pq$ respectively.
Consider the following homotopies: 
\begin{itemize}
\item $\mu_{q}\circ (p^{!} \times \id_{I})\colon |\simp|\times I \to K(\R)$ is a homotopy between the maps 
$\lambda_{D}\circ \tQ(q^{!})\circ p^{!}$ and $\otimes H_{\ast}(F_{q})\circ \lambda_{E}\circ p^{!}$. 
By the construction of $p^{!}$ and $\tQ(q^{!})$ in \cite{BDW} we have $\tQ(q!) \circ p^{!}=(pq)^{!}$ thus
$$\lambda_{D}\circ \tQ(q^{!})\circ p^{!}= c_{pq}$$
Also,  since $ \lambda_{E}\circ p^{!}=c_{p}$ we obtain
$$\left\{ \begin{array}{lcl}
\mu_{q}\circ (p^{!}\times \id_{I})|_{|\simp|\times \{0\}}& = & c_{pq}\\
\mu_{q}\circ (p^{!}\times \id_{I})|_{|\simp|\times \{1\}} & =& c_{p}\otimes H_{\ast}(F_{q})\\
\end{array}\right. 
$$  

\vskip 1mm
\item The homotopy $\omega_{p}\otimes H_{\ast}(F_{q})\colon |\simp|\times I \to K(\R)$ is obtained
using the algebraic contraction of the bundle $p$ (\ref{ALG CONTR NN}). We have
$$\left\{ \begin{array}{lcl}
\omega_{p}\otimes H_{\ast}(F_{q}))|_{|\simp|\times \{0\}}& = & c_{p}\otimes H_{\ast}(F_{p})\\
\omega_{p}\otimes H_{\ast}(F_{q})|_{|\simp|\times \{1\}} & =&\bast_{H(F_{p})\otimes H(F_{q})}\\
\end{array}\right. 
$$  
where  $\bast_{H(F_{p})\otimes H_{\ast}(F_{q})}$ is the constant map sending $|\simp|$ to the point 
$H(F_{p})\otimes H(F_{q})\in K(\R)$.
\end{itemize}
Let $\psi_{pq}$ denote the homotopy obtained by concatenating these two homotopies. 
The pair $((pq)^{!}, \psi_{pq})$ determines a map 
$$\kappa(pq) \colon |\simp|\to \wh(D)_{H_{\ast}(F_{p})\otimes H_{\ast}(F_{q})}$$ 
where as in (\ref{WH DEF}) we set
$$\wh(D)_{H_{\ast}(F_{p})\otimes H_{\ast}(F_{q})}:=
\hofib(\lambda_{D}\colon \tQ(D_{+})\to K(\R))_{H_{\ast}(F_{p})\otimes H_{\ast}(F_{q})}$$
The map $\wh(q^{!})\circ \taus(p)$ is homotopic to the map obtained by reducing $\kappa(pq)$
i.e. by shifting it to a map $|\simp|\to \wh(D)$
(cf. \ref{REDUCTION REM}). The map $\taus(pq)$ is, in turn, the reduction of the unreduced
torsion 
$$\ttaus(pq)\colon |\simp|\to \wh(D)_{H_{\ast}(F_{pq})}$$ 
which is determined by the pair $((pq)^{!}, \omega_{pq})$.
In order to show that $\taus(pq)\simeq \wh(q^{!})\circ \taus(p)$ we can 
now proceed following the steps described in Remark \ref{REDUCTION REM}. We need:
\begin{itemize}
\item[-] a homotopy $(pq)^{!}\simeq (pq)^{!}$ - our choice is the trivial one;
\item[-] a path in $K(\R)$ joining the points $H_{\ast}(F_{pq})$ and $H_{\ast}(F_{p})\otimes H_{\ast}(F_{q})$. 
Such path is defined by the Leray-Hirsch isomorphism
$$\alpha(q|_{F_{pq}})\colon H_{\ast}(F_{pq})\to H_{\ast}(F_{p})\otimes H_{\ast}(F_{q})$$
\item[-] a homotopy of homotopies filling the diagram 
\begin{equation}
\label{COMP HOMOT DIAG}
\xymatrix{
c_{pq} \ar@{=}[rr] \ar[d]_{\omega_{pq}} & &c_{pq} \ar[d]^{\psi_{pq}} \\
\bast_{H_{\ast}(F_{pq})} \ar[rr]_{\alpha(q|_{F_{pq}})} &  &\bast_{H_{\ast}(F_{p})\otimes H_{\ast}(F_{q})} \\
}
\end{equation}
\end{itemize}
To obtain such homotopy of homotopies  recall (\ref{ALG CONTR NN}) that for a unipotent bundle $p$ the 
algebraic contraction $\omega_{pq}$ is the concatenation of  homotopies $\omega^{(i)}_{p}$, $i=1, 2, 3$.
Consider the following diagram
\begin{equation}
\label{SEC HOMOT DIAG}
\xymatrix{
c_{pq} \ar@{=}[rr] \ar[dd]_{\omega^{(1)}_{pq}} & & 
c_{pq} \ar[d]^{\mu_{q}\circ (p^{!}\times\id_{I})} \\
 & & 
 c_{p}\otimes H_{\ast}(F_{q})\ar[d]^{\omega^{(1)}_{p}\otimes H_{\ast}(F_{q})} \\
h_{pq} \ar[rr]^{\alpha(q)}\ar[d]_{\omega^{(2)}_{pq}} & &
h_{p}\otimes H_{\ast}(F_{q}) \ar[d]^{\omega^{(2)}_{p}\otimes H_{\ast}(F_{q})}\\
h^{0}_{pq} \ar[rr]_{\alpha^{0}(q)} \ar[d]_{\omega^{(3)}_{pq}} & &
h^{0}_{p}\otimes H(F_{q}) \ar[d]^{\omega^{(3)}_{p}\otimes H_{\ast}(F_{q})} \\ 
H(F_{pq}) \ar[rr]_{\alpha(q|_{F_{pq}})} & &
H(F_{p})\otimes H(F_{q}) \\
}
\end{equation}
In this diagram every vertex represents a map $|\simp|\to K(\R)$, and every edge represents a homotopy 
of such maps. Concatenation of the vertical homotopies on the left gives $\omega_{pq}$, 
while concatenating the vertical homotopies on the right we obtain $\psi_{pq}$.
Recall that the map $h_{pq}$ is induced by the functor $H_{pq}\colon \simp\to \SS[1]\Chfd(\R)$
given by $H_{pq}(\sigma)=H_{\ast}(\sigma^{\ast}D)$. Since $q$ is a Leray-Hirsch bundle for every 
$\sigma\in \simp$ we have the Leray-Hirsch isomorphism 
$H_{\ast}(\sigma^{\ast}D)\overset{\cong}{\lra}H_{\ast}(\sigma^{\ast}E)\otimes H_{\ast}(F_{q})$. 
These isomorphisms define a natural transformation of functors 
$$H_{pq}\Rightarrow H_{p}\otimes H_{\ast}(F_{q})$$
This natural transformation induces the homotopy $\alpha(q)$ between the maps 
$h_{pq}$ and $h_{p}\otimes H(F_{q})$. The homotopy $\alpha^{0}(q)$ between 
$h^{0}_{pq}$ and $h^{0}_{p}\otimes H_{\ast}(F_{q})$ is obtained in the same way.

In order to obtain a homotopy of homotopies filling the diagram (\ref{COMP HOMOT DIAG})
it suffices to show that each of the squares of the diagram (\ref{SEC HOMOT DIAG}) can be filled 
by a homotopy of homotopies. 
Such homotopy of homotopies filling the top square is described in
(\ref{TOP SQ PROP}). By the naturality of the Leray-Hirsch isomorphisms the concatenations
of $\alpha(q)$ with $\omega^{(2)}_{p}\otimes H_{\ast}(F_{q})$ and $\omega_{pq}$ with $\alpha^{0}(q)$
actually conincide, so the middle square is trivially filled by a homotopy of homotopies.
Finally, existence of homotopy of homotopies filling 
the bottom square follows directly from the naturality and $\pi_{1}B$-equivariance of the Leray-Hirsch 
isomorphism (\ref{LH EQUIV LEMMA}).
\end{proof}


\section{The transfer axiom}
\label{TRANSFER AXIOM SEC}

The results of the last section can be used to show that the formula of Igusa's transfer axiom 
(\ref{TRANSFER AX DEF}) is satisfied by the smooth cohomological torsion whenever $p$
is a unipotent bundle and $q$ is any Leray-Hirsch bundle. 

\begin{theorem}
\label{LH TRANSFER THM}
Let $q\colon D\to E$ be a Leray-Hirsch bundle with fiber $F$ and let $p\colon E\to B$
be a unipotent bundle. We have
$$\ts(pq)=\chi(F) \ts(p) + tr^{E}_{B}(\ts(q))$$
\end{theorem}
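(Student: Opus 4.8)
The plan is to combine the homotopy-level formula of Theorem \ref{SEC TRANSFER THM} with an analysis of the composite of the secondary transfer with the augmentation. By Theorem \ref{SEC TRANSFER THM} there is a homotopy $\taus(pq)\simeq\wh(q^{!})\circ\taus(p)$ of maps $|\simp|\to\wh(D)$; applying $\bar\iota_{D}$ and rationalizing, the construction of the cohomological torsion in (\ref{COH STOR NN}) shows that $\ts(pq)$ is represented by $\bar\iota_{D}\circ\wh(q^{!})\circ\taus(p)\colon|\simp|\to\wh(\ast)_{\Q}$. So the whole theorem rests on understanding the map $\bar\iota_{D}\circ\wh(q^{!})\colon\wh(E)\to\wh(\ast)$, which I will compare with $\chi(F)\cdot\bar\iota_{E}$, the $\chi(F)$-fold multiple of $\bar\iota_{E}$ in the $H$-space $\wh(\ast)$.

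Both $\bar\iota_{D}\circ\wh(q^{!})$ and $\chi(F)\cdot\bar\iota_{E}$ are maps $\wh(E)\to\wh(\ast)_{\Q}$ which cover maps $\tQ(E_{+})\to\tQ(S^{0})$ (via the structure maps of the Whitehead spaces) and restrict to multiplication by $\chi(F)$ on the fibre $\Omega K(\R)$ of those structure maps: for $\bar\iota_{D}\circ\wh(q^{!})$ this is Remark \ref{CHI MULT REM} together with the fact that $\bar\iota_{D}$ is the identity on $K(\R)$, and for $\chi(F)\cdot\bar\iota_{E}$ it is immediate. On the level of $\tQ(E_{+})$, writing $\iota_{D}=\iota_{E}\circ\tQ(q_{\ast})$ with $\tQ(q_{\ast})$ induced by $q$, the Becker--Gottlieb identity $\tQ(q_{\ast})\circ\tQ(q^{!})\simeq\chi(F)\cdot\id$ --- which we only need rationally --- gives $\iota_{D}\circ\tQ(q^{!})\simeq_{\Q}\chi(F)\cdot\iota_{E}$. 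Since rationally $\wh(\ast)$, $\wh(E)$ and $\tQ(E_{+})$ are generalized Eilenberg--Mac Lane spaces, the long exact sequence of the fibration $\Omega K(\R)\to\wh(E)\to\tQ(E_{+})$ then forces the difference $d:=\bigl(\bar\iota_{D}\circ\wh(q^{!})\bigr)-\chi(F)\cdot\bar\iota_{E}$ to factor, up to homotopy, as $d\simeq\phi\circ\pi_{E}$, where $\pi_{E}\colon\wh(E)\to\tQ(E_{+})$ is the structure map and $\phi\colon\tQ(E_{+})\to\wh(\ast)_{\Q}$ depends only on the bundle $q$.

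Granting the factorization, precompose with $\taus(p)$. By diagram (\ref{COH DIAG EQ}) we have $\pi_{E}\circ\taus(p)\simeq p^{!}$, and by (\ref{COH STOR NN}) everything lands in the identity component of $\wh(\ast)_{\Q}$, so passing to represented classes in $\bigoplus_{k}H^{4k}(B;\R)$ gives
$$\ts(pq)=\chi(F)\,\ts(p)+\bigl[\phi\circ p^{!}\bigr].$$
It remains to identify $[\phi\circ p^{!}]$ with $tr^{E}_{B}(\ts(q))$. For this, apply the displayed identity in the special case $B=E$, $p=\id_{E}$: then $pq=q$, the transfer $(\id_{E})^{!}$ is (up to a constant map, invisible in positive degrees) the canonical inclusion $|\simp[E]|\to\tQ(E_{+})$, and one checks directly from the construction that $\ts(\id_{E})=0$ --- the algebraic contraction of $\id_{E}$ is constant and the reduced transfer of $\id_{E}$ is carried to a constant by $\iota_{E}$. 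Hence $\phi$ restricted to (the image of) $E$ in $\tQ(E_{+})$ represents $\ts(q)\in\bigoplus_{k}H^{4k}(E;\R)$; that is, in these degrees $\phi$ is the canonical infinite loop extension of $\ts(q)$. As $\phi$ depends only on $q$, this determines $\phi$ in the general case, and $[\phi\circ p^{!}]$ is then exactly the pullback of the extended class $\ts(q)$ along the Becker--Gottlieb transfer $p^{!}$, i.e.\ $tr^{E}_{B}(\ts(q))$. This proves the theorem.

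The step I expect to be the main obstacle is the factorization $d\simeq\phi\circ\pi_{E}$ of the second paragraph. It draws on three ingredients: (i) the Becker--Gottlieb identity $\tQ(q_{\ast})\circ\tQ(q^{!})\simeq\chi(F)\cdot\id$, at least rationally, which makes $d$ cover the null map on $\tQ(E_{+})$, so that $d$ lifts to the fibre $\Omega K(\R)$ of $\wh(\ast)\to\tQ(S^{0})$; (ii) Remark \ref{CHI MULT REM}, which makes this lift trivial in the fibre direction of $\wh(E)$, so that it factors through $\tQ(E_{+})$; and (iii) the rational description $\wh(\ast)_{\Q}\simeq\Omega K(\R)_{\Q}$ together with the fact that the spaces involved are rational generalized Eilenberg--Mac Lane spaces, which is what turns the two vanishing statements into an honest factorization. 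A modest amount of bookkeeping is also needed near the summands corresponding to $K_{1}(\R)$ and to the nonvanishing odd $K$-groups of $\R$, handled by working throughout with the degree-$4k$ components as in Definition \ref{COH STOR DEF}.
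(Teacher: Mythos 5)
Your proof is correct in essence but follows a genuinely different route from the paper's. The paper forms the difference $\varrho=\taus(p)-\taus(\id_{E})p^{!}\colon|\simp|\to\wh(E)$, observes that both terms lift the same map $\bar p^{!}$ to $\tQ(E_{+})$ so that $\varrho$ lifts further to the fibre $\Omega K(\R)$, and then uses Remark \ref{CHI MULT REM} to compute $\bar\iota_{D}\wh(q^{!})\varrho$ entirely inside $\Omega K(\R)$, where $\wh(q^{!})$ is simply multiplication by $\chi(F)$. Comparing $\bar\iota_{E}j_{E}$ with $\bar\iota_{D}j_{D}$ then gives $\bar\iota_{D}\wh(q^{!})\varrho\simeq(\bar\iota_{E}\varrho)\cdot\chi(F)$, which represents $\chi(F)\ts(p)$ since $\ts(\id_{E})=0$. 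In other words the paper localizes the discrepancy in the \emph{fibre} direction of the Whitehead fibration. You instead form the difference of the maps themselves, $d=\bar\iota_{D}\wh(q^{!})-\chi(F)\bar\iota_{E}$, show that $d$ is rationally null on the fibre $\Omega K(\R)$, and factor $d$ through the \emph{base} $\tQ(E_{+})$ as $d\simeq\phi\circ\pi_{E}$, then identify $\phi$ by the specialization $p=\id_{E}$. Both arguments rest on the same three pillars --- Theorem \ref{SEC TRANSFER THM}, Remark \ref{CHI MULT REM}, and the vanishing $\ts(\id_{E})=0$ --- but yours additionally invokes the rational splitting of the Whitehead fibration into generalized Eilenberg--Mac\,Lane pieces and the uniqueness of rational infinite-loop extensions, infrastructure the paper sidesteps by never having to identify the factor $\phi$ explicitly. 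A small cleanup: the rational Becker--Gottlieb identity $\tQ(q_{\ast})\circ\tQ(q^{!})\simeq\chi(F)\cdot\id$ that you list as ingredient (i) is not actually needed for the factorization $d\simeq\phi\circ\pi_{E}$; since $\tQ(S^{0})_{\Q}$ is discrete and $\wh(\ast)_{\Q}\simeq\Omega K(\R)_{\Q}$, only the vanishing of $d$ on the fibre (your ingredient (ii), via \ref{CHI MULT REM} and the fact that $d$ is an $H$-map) is required, together with the rational GEM splitting (your (iii)). Also note $\pi_{E}\circ\taus(p)$ equals the reduced transfer $\bar p^{!}$ rather than $p^{!}$, though as you say this only affects degree $0$ and so is invisible here.
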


\begin{proof}
Let  $\wh(\ast)_{\Q}$ be the rationalization of  the space $\wh(\ast)$, and let 
$${\bar\iota}_{D} \colon \wh(D)\to \wh(S^{0})_{\Q}$$ 
be the map as in (\ref{COH STOR NN}).
Recall (\ref{COH STOR DEF}) that the cohomology class $\ts(pq)$ is represented by the map 
$${\bar\iota}_{D}\taus(pq)\colon |\simp| \to \wh(S^{0})_{\Q}$$
Since $\wh(D)$ is an infinite loop space the map $\taus(q)\colon |\simp[E]|\to \wh(D)$ admits the extension
$\tQ(E_{+})\to \wh(D)$ which, by abuse of notation, we will also denote by $\taus(q)$. 
The cohomology class $tr^{E}_{B}(\ts(q))$ is represented by the map
$${\bar\iota}_{D}\taus(q)p^{!}\colon |\simp| \to \wh(S^{0})_{\Q}$$
By Theorem \ref{SEC TRANSFER THM} we have 
$${\bar\iota}_{D}\taus(pq)\simeq {\bar\iota}_{D}\wh(q^{!})\taus(p) \text{ \hskip 5mm and \hskip 5 mm} 
{\bar\iota}_{D}\taus(q)p^{!}\simeq {\bar\iota}_{D}\wh(q^{!})\taus(\id_{E})p^{!}$$
where $\taus(\id_{E})$ is the smooth torsion of the identity bundle $\id_{E}\colon E\to E$.
It follows that the cohomology class $$\ts(pq) - tr^{E}_{B}(\ts(q))$$ is represented by 
the map ${\bar\iota}_{D}\wh(q^{!})\varrho$ where
\begin{equation}
\label{PSI EQ}
\varrho := \taus(p)-\taus(\id_{E})p^{!}
\end{equation}
It suffices to show that the map ${\bar\iota}_{D}\wh(q^{!})\varrho$  represents also the cohomology class
$\chi(F_{q})\ts(p)$.

Consider the diagram 
$$
\xymatrix{
& \Omega K(\R) \ar[r]^{\cdot \chi(F)} \ar[d]_{j_{E}} & \Omega K(\R)\ar[d]^{j_{D}} & \\
|\simp| \ar@{-->}[ur]^{\tilde \varrho}\ar[r]_{\varrho} \ar[rd]_{{\bar p}^{!}-{\bar p}^{!}} 
& \wh(E) \ar[r]_{\wh(q^{!})} \ar[d]& \wh(D)\ar[r]_{{\bar\iota}_{D}}\ar[d] & \wh(S^{0})_{\Q}\\
& \tQ(E_{+}) \ar[r]_{\tQ(q^{!})} &  \tQ(D_{+})&\\
}
$$
The two pairs of vertical maps are fibration sequences. The lower square in the diagram commutes 
up to homotopy by the definition of  $\wh(q^{!})$ (\ref{SEC TRANSFER DEF}), 
and the upper square is homotopy commutative by Remark \ref{CHI MULT REM}.
Notice that both $\taus(p)$ and $\taus(\id_{E})p^{!}$ are lifts of the reduced Becker-Gottlieb transfer map 
${\bar p}^{!}\colon |\simp|\to Q(E_{+})$.
As a consequence $\varrho$ is a lift of the contractible map ${\bar p}^{!}-{\bar p}^{!}$, 
and so it admits a lift $\tilde \varrho\colon |\simp|\to \Omega K(\R)$. 
Homotopy commutativity of the upper square gives
$${\bar\iota}_{D}\wh(q^{!})\varrho\simeq ({\bar\iota}_{D}j_{D}\tilde \varrho)\cdot \chi(F)$$
Next, homotopy commutativity of the diagram (\ref{COH DIAG EQ}) implies that the following 
diagram commutes up to homotopy:
$$
\xymatrix{
\Omega K(\R) \ar@{=}[r] \ar[d]_{j_{E}}& \Omega K(\R) \ar[d]^{j_{\ast}} & \Omega K(\R) \ar@{=}[l] \ar[d]^{j_{D}}\\
\wh(E) \ar[r]_{\bar\iota_{E}} & \wh(\ast) & \wh(D) \ar[l]^{\bar\iota_{D}} \\
}
$$
In particular we have ${\bar\iota}_{E}j_{E} \simeq {\bar\iota}_{D}j_{D}$. This gives
\begin{equation}
\label{PSI2 EQ}
{\bar\iota}_{D} \wh(q^{!})\varrho \simeq ({\bar\iota}_{E}j_{E}\tilde \varrho)\cdot \chi(F_{q})\simeq 
({\bar\iota}_{E}\varrho)\cdot \chi(F)
\end{equation}
By the definition (\ref{PSI EQ}) of the map $\varrho$ we have 
$${\bar\iota}_{E}\varrho \simeq {\bar\iota}_{E}\taus(p)-{\bar\iota}_{E}\taus(\id_{E})p^{!}$$
The maps ${\bar\iota}_{E}\taus(p)$ and ${\bar\iota}_{E}\taus(\id_{E})p^{!}$ represent, respectively, 
the cohomology classes $\ts(p)$ and $tr^{E}_{B}(\ts(\id_{E}))$. Since $\id_{E}$ is a trivial bundle, 
we have $\ts(\id_{E})=0$. It follows that ${\bar\iota}_{E}\varrho$ represents the class $\ts(p)$.  
Combining this with (\ref{PSI2 EQ}) we obtain that the cohomology class $\chi(F)\cdot \ts(p)$ 
is represented by the map  ${\bar\iota}_{D}\wh(q^{!})\varrho$. 
\end{proof}

Theorem \ref{LH TRANSFER THM} does not by itself show that Igusa's transfer axiom 
is satisfied by the smooth cohomological torsion $t^{s}$. 
Indeed, if $q$ is an odd dimensional oriented linear  sphere bundle then $q$
need not be a Leray-Hirsch bundle. The statement of Theorem  \ref{LH TRANSFER THM}, however, 
can be extended to the case where $q$ any  linear oriented sphere bundle.

\begin{corollary}
\label{TRANSFER COR}
Let $p\colon E\to B$ be a unipotent bundle and let $q\colon D\to E$ be an oriented linear sphere bundle with 
fiber $S^{n}$. We have
$$\ts(pq)=\chi(S^{n})\ts(p)+tr^{E}_{B}(\ts(q))$$
\end{corollary}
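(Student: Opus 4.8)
The plan is to deduce Corollary \ref{TRANSFER COR} from Theorem \ref{LH TRANSFER THM} by splitting on the parity of $n$. Write $q = S(\eta)$ for an oriented vector bundle $\eta$ over $E$ of rank $n+1$. If $n$ is even, then $\eta$ has odd rank, so by Remark \ref{LH REM} the bundle $q$ is itself a Leray--Hirsch bundle; since $\chi(S^n) = 2$ in this case, the desired formula is precisely the conclusion of Theorem \ref{LH TRANSFER THM}. The work therefore lies in the case $n$ odd, where $\eta$ has even rank, $q$ need not satisfy the Leray--Hirsch property, and, because $\chi(S^n)=0$, the assertion to be proved reduces to $\ts(pq) = tr^E_B(\ts(q))$.

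For odd $n$ the idea is to stabilize by a trivial line bundle. Set $\eta' := \eta\oplus\R$, an oriented bundle of odd rank $n+2$, and let $q'\colon S(\eta')\to E$ be its sphere bundle, with fiber $S^{n+1}$. By Remark \ref{LH REM}, $q'$ is a Leray--Hirsch bundle, so Theorem \ref{LH TRANSFER THM} applies and gives
\[
\ts(pq') \;=\; 2\,\ts(p) + tr^E_B(\ts(q')).
\]
I would then use the classical identification of $S(\eta\oplus\R)$ with the fiberwise double $D(\eta)\cup_{S(\eta)} D(\eta)$ of the disc bundle $r\colon D(\eta)\to E$. Composing with $p$, this exhibits $pq'$ as $pr\cup_{pq} pr$: the two halves are copies of the oriented linear disc bundle $D(\eta)$, each fiberwise homotopy equivalent to its base, so each half is unipotent and their common vertical boundary is $pq$, which is therefore unipotent as well (cf. \ref{VERTICAL BOUNDARY NN}); in particular $\ts(pq)$ is defined. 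Thus $pq' = pr\cup_{pq} pr$ is a unipotent splitting, and likewise $q' = r\cup_q r$ is a unipotent splitting over $E$.

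Next I would apply the additivity corollary (Corollary \ref{ADD COR}) to both splittings, together with the fact that $\ts$ is an exotic invariant (Definition \ref{EXOTIC DEF}; this is recorded in the discussion following Theorem \ref{MAIN THM}). Since $r$ is an oriented linear disc bundle, the exotic axiom applied with the outer bundle equal to $p$, resp. to $\id_E$, gives $\ts(pr) = \ts(p)$ and $\ts(r) = \ts(\id_E) = 0$. Hence additivity yields
\[
\ts(pq') = 2\,\ts(p) - \ts(pq), \qquad \ts(q') = -\,\ts(q),
\]
and the second identity gives $tr^E_B(\ts(q')) = -\,tr^E_B(\ts(q))$ by linearity of the Becker--Gottlieb transfer. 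Substituting this into the Leray--Hirsch transfer formula for $q'$ and comparing with the first identity above gives $\ts(pq) = tr^E_B(\ts(q))$, which is the claimed formula because $\chi(S^n) = 0$.

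The deeper input is entirely contained in Theorems \ref{LH TRANSFER THM} and \ref{ADD THM}; I do not expect a serious obstacle. The main point requiring care in assembling the argument is that the fiberwise double genuinely constitutes a unipotent splitting in the sense of Definition \ref{SPLIT BUNDLE DEF} (matching the vertical boundaries of the two halves and checking unipotence of each half), and that when invoking the exotic and additivity axioms one consistently treats the disc-bundle direction $r$ as the ``$q$'' of those axioms while the outer bundle ($p$ or $\id_E$) plays the role of ``$p$''. One should also note that $D(\eta)$ and both halves of the double inherit orientations from the orientation of $\eta$, so the exotic axiom is legitimately applicable; the remaining steps are direct substitutions.
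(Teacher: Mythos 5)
Your argument is correct and matches the paper's proof essentially step for step: both handle even $n$ directly from Remark \ref{LH REM} and Theorem \ref{LH TRANSFER THM}, and for odd $n$ both stabilize by a trivial line bundle, realize $q'=S(\eta\oplus\R)$ as the fiberwise double of the disc bundle, and then combine additivity (Corollary \ref{ADD COR}) with the exotic property to eliminate the disc-bundle terms. The only differences are notational ($\eta$, $r$ in place of the paper's $\xi$, $D(q)$) and some helpful explicit bookkeeping about why the double is a unipotent splitting.
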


\begin{proof}
If $n$ is even the statement follows from Remark \ref{LH REM} and Theorem \ref{LH TRANSFER THM}. 
Assume then that $n$ is odd, $n=2k-1$, and let  $\xi^{2k}$  be  the oriented vector bundle such that 
 $q\colon D\to E$ is the sphere bundle associated to $\xi$. 
 Let $\varepsilon^{1}$ be the 
 1-dimensional trivial vector bundle over $E$ and let 
 $$q' \colon S(\xi\oplus \varepsilon^{1})\to E$$
 be the $2k$-dimensional sphere bundle associated to $\xi\oplus \epsilon^{1}$. The bundle
 $q'$ admits a splitting 
 $$q' \cong D(q)\cup_{q} D(q)$$
where $D(q)$ is the disc bundle associated to $\xi$. Similarly, if $p\colon E\to B$ is a unipotent
bundle then we have a splitting 
$$pq'\cong(pD(q))\cup_{pq}(pD(q)) $$

Additivity of  the smooth cohomological torsion  (Corollary \ref{ADD THM}) 
implies that 
$$\ts(pq')=2\ts(pD(q))-\ts(pq) \ \ \ \mbox{and}\ \ \ \  \ts(q')= 2\ts(D(q))-\ts(q)$$
Also, since $\ts$ is an exotic invariant (\ref{EXOTIC DEF})
we obtain $\ts(pD(q))=\ts(p)$ and $\ts(D(q))=0$. This gives

\begin{equation}
\label{QQ' EQ}
\ts(pq')=2\ts(p)-\ts(pq) \ \ \ \mbox{and}\ \ \ \  \ts(q')= -\ts(q)
\end{equation}
By  (\ref{LH REM}) the bundle $q'$ is a Leray-Hirsch bundle, so applying
Theorem \ref{LH TRANSFER THM} we get
$$\ts(pq')=\chi(S^{2k})\cdot \ts(p)+ tr^{E}_{B}(\ts(q'))=2\ts(p)+ tr^{E}_{B}(\ts(q'))$$
Combining this with the equations (\ref{QQ' EQ}) we obtain
$$\ts(pq)=-tr^{E}_{B}(\ts(q'))=tr^{E}_{B}(\ts(q))$$
Since $\chi(S^{2k-1})=0$ this gives
$$\ts(pq)=\chi(S^{2k-1})\cdot \ts(p)+ tr^{E}_{B}(\ts(q))$$

\end{proof}

\section{Nontriviality of the smooth torsion}
\label{NONTRIVIALITY SEC}

Recall (\ref{MAIN THM}) that by $t^{s}_{4k}$ we denoted the degree $4k$ component of the 
smooth cohomological torsion $t^{s}$. Our results so far show that $t^{s}_{4k}$ is an exotic
higher torsion invariant of unipotent bundles. Our final goal is to demonstrate that 
$t^{s}_{4k}$ is a non-trivial invariant. 

\begin{theorem}
\label{NONTRIVIALITY THM}
For each $k>0$ there exists a smooth bundle $p\colon E\to S^{4k}$ such that $t^{s}_{4k}(p)\neq 0$.
\end{theorem}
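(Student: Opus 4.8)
\emph{Reduction.} The plan is to exhibit, for each $k>0$, a single smooth bundle $p\colon E\to S^{4k}$ with $t^{s}_{4k}(p)\neq0$. First observe that for $k\geq1$ we have $\pi_{1}S^{4k}=0$, so \emph{every} smooth bundle over $S^{4k}$ is automatically unipotent (\ref{UNIPOTENT DEF}) and $t^{s}_{4k}$ is defined on all of them. Unwinding (\ref{COH STOR NN}), the class $t^{s}_{4k}(p)\in H^{4k}(S^{4k};\R)=\R$ is the coordinate, in the Eilenberg--MacLane summand of (\ref{RATIONAL WE EQ}) detecting $H^{4k}$, of the rationalized map ${\bar\iota}_{E}\circ\taus(p)\colon S^{4k}\to\wh(\ast)_{\Q}\simeq\Omega K(\R)_{\Q}$; equivalently it is the Borel regulator of a class in $\pi_{4k}\Omega K(\R)_{\Q}=K_{4k+1}(\R)\otimes\Q$. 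So everything reduces to producing a bundle for which this $K_{4k+1}(\R)\otimes\Q$--class is nonzero.

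\emph{The bundle.} Since $\pi_{1}S^{4k}=0$ the map $c_{p}$ is nullhomotopic, its algebraic contraction $\omega_{p}$ (\ref{ALG CONTR NN}) is essentially canonical, and the smooth structure enters $\taus(p)$ only through the Becker--Gottlieb transfer $p^{!}$ and the square (\ref{pA EQ}); I would therefore look for a bundle whose smooth structure is interesting but homotopically invisible. The natural source is the diffeomorphism group of a high--dimensional disc: for $N$ large enough that degree $4k-1$ lies in the concordance stable range of $D^{N}$, the work of Waldhausen \cite{Wal}, \cite{WalM1} and Farrell--Hsiang, combined with Borel's calculation \cite{Borel}, furnishes an isomorphism
$$\pi_{4k-1}\mathrm{Diff}(D^{N},\partial D^{N})\otimes\Q\;\cong\;K_{4k+1}(\Z)\otimes\Q\;=\;\Q ,$$
realised by Waldhausen's parametrized stable $h$--cobordism map followed by the linearization $A(\ast)\to K(\Z)$. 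I would pick a generator $\varphi$ and clutch two trivial $D^{N}$--bundles over the two hemispheres of $S^{4k}$ along $\varphi$; this gives a smooth bundle $p\colon E\to S^{4k}$ with fibre $D^{N}$ which is a product near $\partial^{v}E$ and whose fibres are contractible, so that $E\simeq S^{4k}$ and $p$ is fibrewise homotopy trivial. (One may instead double $p$ along $\partial^{v}E$ to get a closed--fibre $S^{N}$--bundle; by additivity of $t^{s}$, Corollary \ref{ADD COR}, its torsion equals $2\,t^{s}_{4k}(p)$, so nothing is lost.)

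\emph{Computing the torsion.} The heart of the argument is to show that, up to a fixed nonzero universal scalar, the rationalised map ${\bar\iota}_{E}\circ\taus(p)$ is the image of $\varphi$ in $K_{4k+1}(\R)\otimes\Q=\pi_{4k}\Omega K(\R)_{\Q}$ under $K_{4k+1}(\Z)\otimes\Q\to K_{4k+1}(\R)\otimes\Q$. This I would prove by unwinding $\taus(p)$ via the factorization $p^{A}=\ta_{E}\circ p^{!}$ of (\ref{pA EQ}), the linearization $\lambda^{A}_{E}$ of (\ref{LIN NN}), and the comparison with the trivial bundle $S^{4k}\times D^{N}\to S^{4k}$ (whose torsion vanishes by exoticness (\ref{EXOTIC DEF}), being a linear disc bundle): the discrepancy between the smooth description $p^{A}$ and the algebraic description of the $A$--theory characteristic class of $p$ is exactly Waldhausen's concordance invariant of $\varphi$, and applying $\lambda^{A}_{E}$ and ${\bar\iota}$ carries it into $K_{4k+1}(\R)\otimes\Q$. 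Since the Borel regulator $K_{4k+1}(\Z)\otimes\Q\to\R$ used in (\ref{RATIONAL WE EQ}) is injective for $k\geq1$ (Borel \cite{Borel}), the image of $\varphi$ is nonzero, and therefore $t^{s}_{4k}(p)\neq0$.

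\emph{The main obstacle.} The hard part is precisely this last identification. One must verify that the two trivializations of ${\bar\iota}_{E}\circ{\bar p}^{!}$ whose difference defines $\taus(p)$ --- the ``smooth'' trivialization provided by $p^{!}$ together with the commutativity of (\ref{pA EQ}), and the ``algebraic'' one provided by the contraction $\omega_{p}$ --- correspond, under Waldhausen's parametrized stable $h$--cobordism theorem \cite{WalM1}, to the stable--homotopy and the algebraic--$K$--theory factorizations of the $A$--theory characteristic class of \cite{DWW}, \cite{BDW}, so that their difference genuinely is Waldhausen's invariant of $\varphi$ and is not accidentally zero. Since $\pi_{4k-1}\mathrm{Diff}(D^{N},\partial D^{N})\otimes\Q$ is one--dimensional it is enough to pin this down up to a nonzero scalar; the remaining bookkeeping --- in particular checking that the concordance stable range and the rational degrees line up as asserted above --- is where the technical weight lies.
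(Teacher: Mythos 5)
Your proposal is correct in spirit but follows a genuinely different route from the paper's. You construct exotic disc bundles $p\colon E\to S^{4k}$ by clutching two trivial $D^N$--bundles along a generator of $\pi_{4k-1}\mathrm{Diff}(D^N,\partial D^N)\otimes\Q\cong K_{4k+1}(\Z)\otimes\Q$ (Farrell--Hsiang, via Waldhausen's parametrized stable $h$--cobordism theory), and then propose to identify $\taus(p)$ with Waldhausen's concordance invariant of the clutching diffeomorphism --- the step you rightly flag as bearing ``the technical weight.'' The paper bypasses that hand-carried identification by instead taking $p$ to be a \emph{linear} oriented sphere bundle classified by a map $f\colon S^{4k}\to BSO$ lying in $\ker J_\ast$, which is nonempty and contains torsion-free elements because $\pi_{4k}BSO\cong\Z$ while $\pi_{4k}BSG$ is finite. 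Since $Jf\simeq\ast$, there is a lift $\tilde f\colon S^{4k}\to G/O$, and Lemma~\ref{WAL J LEMMA} (a packaging of results of Waldhausen \cite{WalM1} and B\"okstedt--Waldhausen \cite{Bok-Wal} about the map of fibration sequences $G/O\to BSO\to BSG$ into $\Omega\wh[\rm diff](\ast)\to\tQ(S^0)\to A(\ast)$) shows directly that $\tilde f$ followed by $G/O\to\wh(\ast)$ is homotopic to $\bar\iota_E\circ\taus(p)$. Nontriviality then reduces to the rational injectivity of $\pi_{4k}(G/O)_\Q\to\pi_{4k}\wh(\ast)_\Q$, which is quoted from B\"okstedt \cite{Bokstedt} and Waldhausen \cite{Wal1}. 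Both routes rest on the same ultimate inputs --- Borel's regulator and Waldhausen's link between smooth structures and $A$--theory --- but the paper's choice of a \emph{linear} sphere bundle means the transfer $p^!$ factors universally through $\eta\colon BSO\to\tQ(S^0)$, so the torsion is computed in one stroke by the lemma; your exotic disc bundles are conceptually closer to Hatcher's and Igusa's original examples and make the torsion visibly detect the exotic smoothing, but the identification you defer to ``where the technical weight lies'' is exactly what Lemma~\ref{WAL J LEMMA} is doing for the paper, and a written-out proof along your lines would have to carry out that comparison explicitly.
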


Let $BSO$ be the classifying space of oriented virtual vector bundles of dimension 0, and let 
$BSG$ the classifying space of 0 dimensional virtual oriented spherical fibrations. We have 
the $J$-homomorphism map 
$$J\colon BSO\to BSG$$
The proof of Theorem \ref{NONTRIVIALITY THM} will rely on the following 

\begin{lemma}
\label{WAL J LEMMA}
There exists a homotopy commutative diagram
$$
\xymatrix{
BSO \ar[r]^{\eta}\ar[d]_{J} & \tQ(S^{0})\ar[d]^{\ta_{\ast}} \\
BSG \ar[r]_{\eta'} & A(\ast) \\
}
$$  
Moreover, if $p\colon E\to B$ is an oriented linear sphere bundle classified by a map 
$f\colon B \to BSO$ then the diagram 
$$
\xymatrix{
B \ar[r]^{p^{!}}\ar[d]_{f} & \tQ(E_{+}) \ar[d] \\
BSO \ar[r]_{\eta} & \tQ(S^{0}) \\
}
$$
commutes up to homotopy.
\end{lemma}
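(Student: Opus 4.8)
The plan is to produce $\eta$ and $\eta'$ from the universal examples over $BSO$ and $BSG$, and then to read off the square from the triangle (\ref{pA EQ}). Fix $m$. Over $BSO(m)$ let $\xi_m$ be the universal oriented rank-$m$ vector bundle, let $q_m\colon S(\xi_m)\to BSO(m)$ be its (smooth) sphere bundle, with fiber $S^{m-1}$, and let $D(\xi_m)$ be the associated disc bundle. Composing the Becker-Gottlieb transfer $q_m^{!}$ with the augmentation $\iota_{S(\xi_m)}\colon\tQ(S(\xi_m)_{+})\to\tQ(S^{0})$ gives a map $\eta_m\colon BSO(m)\to\tQ(S^{0})$. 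On the other side, as the remark following (\ref{pA EQ}) notes, the construction of $p^{A}$ in (\ref{FpA EQ})--(\ref{pA EQ}) uses only the homotopy pullbacks $\sigma^{\ast}E$, so it makes sense for an arbitrary fibration with finitely dominated fibers; applied to the universal oriented spherical $S^{m-1}$-fibration $r_m\colon E_m\to BSG(m)$ and post-composed with the augmentation $A(E_m)\to A(\ast)$ it yields $\eta'_m\colon BSG(m)\to A(\ast)$. It then remains to assemble the $\eta_m$ into $\eta\colon BSO\to\tQ(S^{0})$ and the $\eta'_m$ into $\eta'\colon BSG\to A(\ast)$, and to verify the two diagrams.

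The assembling is the step I expect to be the \textbf{main obstacle}, because of a sign under stabilization. Write $s_m\colon BSO(m)\hookrightarrow BSO(m+1)$ for the stabilization map; it classifies $\xi_m\oplus\varepsilon^{1}$, whose sphere bundle is the fiberwise double $S(\xi_m\oplus\varepsilon^{1})\cong D(\xi_m)\cup_{S(\xi_m)}D(\xi_m)$. Feeding this splitting into the additivity of the Becker-Gottlieb transfer (the $\tQ$-part of Theorem \ref{ADD TRANSFER THM}, equivalently the Becker-Schultz axioms), and using that the augmented transfer of the fiberwise-contractible bundle $D(\xi_m)$ is the constant map at $1\in\pi_{0}\tQ(S^{0})$, a short computation with $\chi(S^{m-1})=1+(-1)^{m-1}$ shows that the \emph{reduced} maps $\widetilde\eta_m$ --- obtained by subtracting off the locally constant $\pi_{0}$-component --- satisfy $\widetilde\eta_{m+1}\circ s_m\simeq-\widetilde\eta_m$. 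Hence the sign flips cancel in pairs: restricting to odd $m$, which is cofinal so that $BSO$ is the colimit of $BSO(1)\hookrightarrow BSO(3)\hookrightarrow\cdots$, the maps $\widetilde\eta_{2k+1}$ form a compatible system and assemble to $\eta$ (normalizing away these signs and the $\pi_{0}$-shifts by $\chi$ of the fiber costs only an overall sign and translation, immaterial for Theorem \ref{NONTRIVIALITY THM}). The same computation in $A$-theory --- with the pre-additivity theorem (\ref{WAL ADD THM}) in place of transfer additivity and the two fiberwise cones in place of $D(\xi_m)$ --- assembles $\eta'$ in the same way; compatibility of the assembling homotopies with the bonding maps is routine, or one works in the homotopy category of the colimit diagram.

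To obtain the square in part (a), fix $m$. The sphere bundle $q_m$ has underlying oriented spherical fibration $r_m$, classified by $J_m\colon BSO(m)\to BSG(m)$, with a covering map $\bar J_m\colon S(\xi_m)\to E_m$. Since the $p^{A}$-construction is natural for pullbacks of fibrations, $A(\bar J_m)\circ q_m^{A}\simeq r_m^{A}\circ J_m$, and composing with the augmentations to $A(\ast)$ gives $a\circ q_m^{A}\simeq\eta'_m\circ J_m$, where $a\colon A(S(\xi_m))\to A(\ast)$ is the augmentation. On the other hand the triangle (\ref{pA EQ}) gives $\ta_{S(\xi_m)}\circ q_m^{!}\simeq q_m^{A}$, and naturality of the assembly maps $\ta$ (with the augmentations) along $S(\xi_m)\to\ast$ gives $\ta_{\ast}\circ\iota_{S(\xi_m)}\simeq a\circ\ta_{S(\xi_m)}$. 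Combining,
$$\ta_{\ast}\circ\eta_m\ =\ \ta_{\ast}\circ\iota_{S(\xi_m)}\circ q_m^{!}\ \simeq\ a\circ\ta_{S(\xi_m)}\circ q_m^{!}\ \simeq\ a\circ q_m^{A}\ \simeq\ \eta'_m\circ J_m .$$
Passing to the colimit --- carrying the normalization of the previous paragraph through, and using that $\ta_{\ast}$ is a map of infinite loop spaces --- gives $\ta_{\ast}\circ\eta\simeq\eta'\circ J$, which is part (a).

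Finally, for part (b), let $p\colon E\to B$ be an oriented linear sphere bundle with fiber $S^{n}$, classified by $f\colon B\to BSO(n+1)$, so that $E\cong f^{\ast}S(\xi_{n+1})$ and $p=f^{\ast}q_{n+1}$. Naturality of the Becker-Gottlieb transfer under pullback (Becker-Schultz; the same naturality of $p^{!}$ used for the characteristic-class property in (\ref{COH STOR NN})) gives a homotopy commutative square with rows $p^{!}$, $q_{n+1}^{!}$ and vertical maps induced by $f$ and its lift $\bar f\colon E\to S(\xi_{n+1})$. Post-composing with the augmentations $\iota_{E}\colon\tQ(E_{+})\to\tQ(S^{0})$ and $\iota_{S(\xi_{n+1})}$, which are compatible via $\bar f$, gives $\iota_{E}\circ p^{!}\simeq\iota_{S(\xi_{n+1})}\circ q_{n+1}^{!}\circ f=\eta_{n+1}\circ f\simeq\eta\circ f$, the last step under the identification of $\eta$ with the level-$(n+1)$ augmented transfer fixed in the assembling step; this is the second diagram of the lemma.
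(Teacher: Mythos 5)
The paper's proof is a two-line citation: \cite{WalM1}, Propositions 3.1--3.2 for the square, and \cite{Bok-Wal}, Lemma 2.6 for the triangle. You instead build $\eta$ and $\eta'$ explicitly --- $\eta_m$ as the augmented Becker--Gottlieb transfer of the universal sphere bundle over $BSO(m)$, $\eta'_m$ via the $p^A$-construction applied to the universal spherical fibration over $BSG(m)$ --- and then deduce the square from the paper's triangle (\ref{pA EQ}). This is very likely what lies behind the cited references, and it has the virtue of making part (b) essentially a tautology (pullback naturality of the transfer). Your sign computation $\widetilde\eta_{m+1}\circ s_m\simeq-\widetilde\eta_m$ is correct and is a real subtlety that the paper's terse citation hides; restricting to odd ranks is the right fix. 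Your further remark that part (b), as literally stated with the unreduced $p^!$ and a single stable $\eta$, can only hold after a $\pi_0$-normalization is also correct, and is a caveat the paper does not acknowledge (and which is indeed harmless for Theorem \ref{NONTRIVIALITY THM}).

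Two steps in your argument are genuinely thin. First, $BSO(m)$ is not a compact manifold, so the paper's combinatorial transfer $q_m^!$ --- built in (\ref{TR LIN NN}) from partitions of a compact total space and the smooth simplex category of a compact base --- does not literally apply to the universal bundle; one has to run the construction over a compact exhaustion of $BSO(m)$ by oriented Grassmannians and pass to a colimit, invoking pullback naturality of the transfer to get a compatible system. Second, and more seriously, assembling the $\widetilde\eta_{2k+1}$ into a single map out of $BSO$ requires more than pairwise agreement of homotopy classes: $BSO$ is a homotopy colimit of the $BSO(2k+1)$, and producing a map out of it demands coherent choices of the bonding homotopies. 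Your aside that one can ``work in the homotopy category of the colimit diagram'' does not resolve this, since sequential homotopy colimits are not colimits in the homotopy category; one has to either make the levelwise constructions strictly compatible, or control the $\lim^1$-term in the Milnor sequence for $[BSO,\tQ(S^0)]$. The same issue recurs when you assemble $\eta'$ and when you pass the commuting homotopies of (a) and (b) to the colimit. Both gaps are repairable, but as written the construction of the stable maps $\eta$, $\eta'$ is not complete.
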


\begin{proof}
The first part of the lemma follows from \cite{WalM1}, Propositions 3.1 and 3.2.
The second part is a consequence of \cite{Bok-Wal}, Lemma 2.6.
\end{proof}

\begin{proof}[Proof of Theorem \ref{NONTRIVIALITY THM}]
Fix $k>0$. By the Bott periodicity we have $\pi_{4k}BSO\cong \Z$. Since the the homotopy 
group $\pi_{4k}BSG$ is finite the kernel of the homomorphism
$$J_{\ast}\colon \pi_{4k}(BSO)\to \pi_{4k}(BSG)$$
contains torsion free elements. Let $f\colon S^{4k}\to BSO$ be a map representing 
such element in $\ker J_{\ast}$ and let $p\colon E\to S^{4k}$ be a linear oriented sphere bundle 
classified by $f$. We will show that $t^{s}_{4k}(p)\neq 0$. 

Consider the homotopy commutative diagram
$$
\xymatrix{
& G/O \ar[r]\ar[d] & \Omega \wh[\rm diff](\ast) \ar[r]\ar[d] & \wh(\ast)\ar[d] \\
S^{4k}\ar[r]^{f} \ar@{-->}[ru]^{\tilde f}& BSO \ar[r]^{\eta}\ar[d]_{J} & 
\tQ(S^{0})  \ar@{=}[r]\ar[d]^{\ta_{\ast}} & \tQ(S^{0})\ar[d]^{\lambda_{\ast}} \\
& BSG \ar[r]^{\eta'} & A(\ast) \ar[r]^{\lambda^{A}_{\ast}} & K(\R) \\
}
$$
where each pair of  vertical maps is a fibration sequence. By the choice of $f$ the composition
$J\circ f$ is homotopy trivial, so  it admits a lift to $\tilde f\colon S^{4k}\to G/O$.
Using Lemma \ref{WAL J LEMMA} one can check that the composition 
\begin{equation}
\label{NONTRIV EQ}
|\simp[S^{4k}]|\overset{\simeq}{\lra}S^{4n}\overset{\tilde f}{\lra} G/O \to \wh(\ast)
\end{equation}
is homotopic to the map 
$$|\simp[S^{4n}]|\overset{\tau^{s}(p)}{\lra} \wh(E)\to \wh(\ast)$$
As a consequence in order to show that $t^{s}_{4k}(p)\neq 0$ we only need to verify 
that the map (\ref{NONTRIV EQ}) is rationally non-trivial. 
By the choice of $f$ the map $\tilde f$ represents a torsion free element in $\pi_{4k}(G/O)$, 
thus if $(G/O)_{\Q}$ is the rationalization of $G/O$ then the map 
$$\tilde f\colon S^{4k} \to (G/O)_{\Q}$$
is not contractible. It is then enough to notice that  the map 
$$(G/O)_{\Q}\to \wh(\ast)_{\Q}$$
yields a monomorphism on the level of homotopy groups. This  follows essentially from 
\cite[Theorem 1]{Bokstedt} and \cite[Proposition 2.2]{Wal1}.
\end{proof}


\renewcommand{\thesubsection}{\setcounter{theorem}{0}{A.\arabic{subsection}}}

\makeatletter
\def\subsection#1{
  \ifhmode\par\fi
  \removelastskip
  \vskip 3ex\goodbreak
  \refstepcounter{subsection}%
  \noindent
  \leavevmode
  \begingroup
  \bfseries
  \thesubsection.\ #1.
  \endgroup
}
\makeatother

\renewcommand{\theequation}{A-\arabic{equation}} 
\renewcommand{\thetheorem}{A.\arabic{subsection}.\arabic{theorem}}
\setcounter{theorem}{0}

\setcounter{secnumdepth}{2}

\section*{Appendix : Homology of chain complexes}
\label{APP SEC}

In this appendix we gathered some facts related to the passage from chain complexes 
to their homology on the level of $K$-theory which we use throughout the paper.

\subsection{The homotopy $\bm\Theta$}
Let $H\colon \SS[1]\Chfd(\R) \to \SS[1]\Chfd(\R)$ be the functor which assigns to each chain complex 
$C$ its homology complex $H_{\ast}(C)$, and let 
$$|H|\colon |\SS[1]\Chfd(\R)|\to |\SS[1]\Chfd(\R)|$$ 
be the map induced by $H$ on the nerve of $\SS[1]\Chfd(\R)$. 
Also, let  
$$k\colon |\SS[1]\Chfd(\R)|\to K(\R)$$ 
be the map as in (\ref{ST MAP NN}).  We have

\begin{lemma}[{\cite[Lemma 6.8]{BDW}}]
\label{HOMOLOGY LEMMA}
The maps $k$ and $k \circ |H|$ are homotopic via a preferred homotopy
$$\Theta \colon |\SS[1]\Chfd(\R)|\times I \to K(\R)$$

\end{lemma}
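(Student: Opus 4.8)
The plan is to exhibit $\Theta$ as a concatenation of two pieces, each coming from a natural transformation of functors into $\SS[1]\Chfd(\R)$, following the standard observation that every chain complex $C$ of vector spaces is quasi-isomorphic to its homology $H_{\ast}(C)$ in a way that can be made functorial up to the data needed to produce a homotopy in $K$-theory. Concretely, recall that over a field every short exact sequence of chain complexes splits degreewise, and in particular one can filter $C$ by the subcomplex $Z_{\ast}(C)$ of cycles, giving a cofibration sequence $Z_{\ast}(C)\ratail C_{\ast}(C)\twoheadrightarrow B_{\ast}(C)[-1]$ in $\Chfd(\R)$, where $B_{\ast}(C)$ is the complex of boundaries (with zero differential) and $[-1]$ denotes a shift. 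Here both $Z_{\ast}(C)$ and $B_{\ast}(C)$ are naturally functors of $C$, and the further short exact sequence $B_{\ast}(C)\ratail Z_{\ast}(C)\twoheadrightarrow H_{\ast}(C)$ expresses $H_{\ast}(C)$ as the cofiber of a natural cofibration with acyclic total term. I would phrase the whole argument in terms of these two functorial cofibration sequences.

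First I would apply Waldhausen's pre-additivity theorem (\ref{WAL ADD THM}) to the functor $\simp$-free version: the assignment $C\mapsto (Z_{\ast}(C)\ratail C_{\ast}(C))$ defines a functor $\SS[1]\Chfd(\R)\to \SS[2]\Chfd(\R)$ (after checking $Z_{\ast}$ preserves cofibrations and weak equivalences, i.e. quasi-isomorphisms), and composing with $k$ and the evaluation functors $Ev_{1},Ev_{2},Ev_{12}$ of (\ref{WAL ADD NN}) gives, via $\mho$, a preferred homotopy between $k$ (evaluated on $C$, which is $Ev_{2}$) and $k\circ|Z|+k\circ|B[-1]|$. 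Since $B_{\ast}(C)[-1]$ is an acyclic complex, it represents a point in the basepoint component of $K(\R)$; more precisely one uses the evident cofibration $0\ratail B_{\ast}(C)[-1]\twoheadrightarrow B_{\ast}(C)[-1]$ together with the fact that $B_{\ast}(C)[-1]$ is quasi-isomorphic to $0$, so $k\circ|B[-1]|$ is preferredly null-homotopic. This produces the first half of $\Theta$: a homotopy from $k$ to $k\circ|Z|$. Running the same machine on the functor $C\mapsto(B_{\ast}(C)\ratail Z_{\ast}(C))$ and using that $B_{\ast}(C)$ is acyclic gives a homotopy from $k\circ|Z|$ to $k\circ|H|$, since the cofiber of $B_{\ast}(C)\ratail Z_{\ast}(C)$ is exactly $H_{\ast}(C)$. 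Concatenating, I obtain the desired $\Theta$.

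The main obstacle I anticipate is bookkeeping rather than conceptual: one must make sure that all of $Z_{\ast}(-)$, $B_{\ast}(-)$, the shift $[-1]$, and the relevant quotients are genuinely functorial on $\SS[1]\Chfd(\R)$ (i.e. on the category of quasi-isomorphisms, not just on all chain maps), so that the pre-additivity theorem applies with $\DD=\SS[1]\Chfd(\R)$ itself as the indexing category in the sense of (\ref{ST MAP NN}). This is where finite dominatedness matters — one should verify that $Z_{\ast}(C)$ and $B_{\ast}(C)$ remain homotopy finitely dominated when $C$ is, which is immediate since over $\R$ everything in sight is a bounded complex of finite-dimensional vector spaces up to quasi-isomorphism, but it should be stated. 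A secondary point is checking that the null-homotopies of $k$ on the acyclic functors $B_{\ast}(C)[-1]$ and $B_{\ast}(C)$ are themselves natural in $C$, so that they glue into genuine homotopies of maps out of $|\SS[1]\Chfd(\R)|$; this again follows from pre-additivity applied to the cofibration $0\ratail (\text{acyclic})$, whose two outer evaluations are the constant map at $0$ and the given acyclic-valued functor, and whose middle term is quasi-isomorphic to $0$. Once these naturality checks are in place the concatenation is routine, and since the paper only needs the homotopy as an ingredient (not its explicit form), this level of care suffices; indeed this is exactly \cite[Lemma 6.8]{BDW}, so I would simply reproduce that argument, citing it, and emphasize the two-step factorization through the cycle subcomplex.
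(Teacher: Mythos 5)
Your proposal takes a genuinely different route from the paper: you want to apply pre-additivity directly to the cofibration sequences $Z_{\ast}(C)\ratail C\twoheadrightarrow B_{\ast}(C)[-1]$ and $B_{\ast}(C)\ratail Z_{\ast}(C)\twoheadrightarrow H_{\ast}(C)$, whereas the paper uses Postnikov-style truncations $P_q C$ and the layer complexes $Q_q C=\ker(P_q C\to P_{q-1}C)$, iterating pre-additivity degree by degree on $Q_qC\ratail P_qC\twoheadrightarrow P_{q-1}C$. Your route fails at a point that the paper's more elaborate decomposition is designed precisely to circumvent: the assignments $C\mapsto Z_{\ast}(C)$ and $C\mapsto B_{\ast}(C)$ are \emph{not} functors on $\SS[1]\Chfd(\R)$, because a quasi-isomorphism of chain complexes does not in general induce a quasi-isomorphism (over $\R$, an isomorphism) on cycles or on boundaries. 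Already the quasi-isomorphism from $(\R\overset{\id}{\lra}\R)$ in degrees $1,0$ to the zero complex preserves neither. You flag a functoriality concern near the end of your proposal but then address only finite dominatedness, which is not the issue; the real problem is that $Z_{\ast}$ and $B_{\ast}$ do not take weak equivalences to weak equivalences, so the maps $k\circ|Z|$ and $k\circ|B|$ out of $|\SS[1]\Chfd(\R)|$ are not even defined. The paper's $Q_qC$ is the two-term complex $(B_q(C)\hookrightarrow Z_q(C))$ whose homology is exactly $H_q(C)$; bundling $B_q$ and $Z_q$ together in this way is what makes $P_q$ and $Q_q$ honest endofunctors of $\SS[1]\Chfd(\R)$, while $Z_{\ast}$ and $B_{\ast}$ in isolation are not.

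There is also a secondary error: $B_{\ast}(C)$ and $B_{\ast}(C)[-1]$ both have zero differential, hence each is its own homology, and in particular neither is acyclic nor quasi-isomorphic to $0$ unless $B_{\ast}(C)=0$. So $k\circ|B|$ and $k\circ|B[-1]|$ are not null-homotopic, and your intermediate conclusions $k\simeq k\circ|Z|$ and $k\circ|Z|\simeq k\circ|H|$ are both false. What is true (were the functoriality problem not already fatal) is that $k\circ|B[-1]|$ is homotopic to $-\,k\circ|B|$, via the further cofibration $B_{\ast}(C)\ratail\mathrm{Cone}(\id_{B_{\ast}(C)})\twoheadrightarrow B_{\ast}(C)[-1]$ whose middle term is genuinely acyclic; combining all three cofibration sequences would then give $k\simeq k\circ|H|$. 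Your argument is missing this suspension step, but in any event the functoriality obstruction blocks it earlier.
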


\begin{proof}
For a chain complex 
$$
\xymatrix{
C = (\dots \ar[r]  & C_{2}\ar[r]^{\partial_{2}} & C_{1}\ar[r]^{\partial_{1}} & C_{0}\ar[r] & 0 )
}
$$ 
let $P_{q}C$ denote the complex such that $(P_{q}C)_{i} =0$ for $i>q+1$, 
$(P_{q}C)_{q+1}= \partial(C_{q+1})$, and $(P_{q}C)_{i}=C_{i}$ for $i\leq q$. 
Let $Q_{q}C$ be the kernel of the map $P_{q}C\to P_{q-1}C$. We obtain cofibration
sequences of chain complexes
$$Q_{q}C\to P_{q}C\to P_{q-1}C$$
functorial in $C$. 
Notice that the complex $Q_{q}C$ is canonically quasi-isomorphic to its ho\-mo\-logy complex 
$H_{\ast}(Q_{q}C)$, and that this last complex has only one non-zero module $H_{q}(C)$ in degree $q$. 
Let $P_{q}\colon \SS[1]\Chfd(\R)\to \SS[1]\Chfd(\R)$ be the functor which assigns to 
to $C\in \SS[1]\Chfd(\R)$ the chain complex $P_{q}C$.
By Waldhausen's pre-additivity theorem (\ref{WAL ADD THM})
the map $k\circ |P_{q}|\colon |\SS[1]\Chfd(\R)|\to K(\R)$
is homotopic to the map induced by the assignment 
$$C\mapsto P_{q-1}C \oplus H_{\ast}(Q_{q}C)$$
Iterating this argument  for each  $q$ we obtain a homotopy 
$$\Theta_{q}\colon |\SS[1]\Chfd(\R)|\times I \to K(\R)$$ 
between the map  $k\circ |P_{q}|$ and $k\circ |H|\circ |P_{q}|$.
Finally, notice that since chain complexes in $\Chfd(\R)$ are homotopy finitely dominated,
for any $C\in \Chfd(\R)$ there is $q\geq 0$ such that $P_{q}C\simeq C$. This means that on
the connected component of $C$ in $|\SS[1]\Chfd(\R)|$ we can set $\Theta := \Theta_{q}$.
\end{proof}

\begin{remark}
\label{OMEGA1 REM}
Let $p\colon E\to B$ be a smooth bundle, and let 
$$c_{p}, h_{p}\colon |\simp|\to K(\R)$$
be the maps defined, respectively, in (\ref{LIN NN}) and (\ref{ALG CONTR NN}). 
Let $C_{p}\colon \simp \to \SS[1]\Chfd(\R)$ be the 
chain complex functor (\ref{LIN NN}) and let
$$|C_{p}|\colon |\simp|\to |\SS[1]\Chfd(\R)|$$
be the map of nerves of categories induced by $C_{p}$.
Notice that  $c_{p}=k \circ |C_{p}|$ and $h_{p}=k \circ |H|\circ |C_{p}|$. As a consequence applying 
Lemma \ref{HOMOLOGY LEMMA} be obtain a homotopy between the maps $c_{p}$ and $h_{p}$. 
This is the homotopy $\omega^{(1)}_{p}$ used in (\ref{ALG CONTR NN}) to construct the algebraic 
contraction for $p$.
\end{remark}

\subsection{\bf Homological additivity} 
\label{HOMOLOG ADD}
The category $\SS[2]\Chfd(\R)$ (\ref{WAL CATS NN})
can be identified with the category of sort exact sequences of chain complexes with 
quasi-isomorphisms of short exact sequences as morphisms. For a short exact sequence 
$$S:=(0\to A \lra B \lra C \to 0)$$
we have then $Ev_{1}(S)=A$, $Ev_{2}(S)=B$, and $Ev_{12}(S)=C$
where 
$$Ev_{i}\colon \SS[2]\Chfd(\R)\to \SS[1]\Chfd(\R)$$ 
are the functors defined  in (\ref{WAL ADD NN}).  Let  
$$\Theta^{i}\colon |\SS[2]\Chfd(\R)|\times I\to K(\R)$$ 
be the homotopy between 
$k\circ |Ev_{i}|$ and $k\circ |H| \circ |Ev_{i}|$ defined by $\Theta$.
Consider the following diagram
\begin{equation}
\label{ADD THETA EQ}
\xymatrix{
k\circ |Ev_{2}| \ar[r]^(.4){\mho}\ar[d]_{\Theta^{2}} & 
k\circ |Ev_{1}|+ k\circ |Ev_{12}|\ar[d]^{\Theta^{1}+\Theta^{12}} \\
k\circ |H|\circ |Ev_{2}|&  k\circ |H|\circ |Ev_{1}|+ k\circ |H| \circ |Ev_{12}| \\
}
\end{equation}
Here every vertex represents a map $|\SS[2]\Chfd(\R)|\to K(\R)$ and each arrow 
is a homotopy of such maps. The homotopy $\mho$ is given by Waldhausen's pre-additivity theorem 
(\ref{WAL ADD THM}). Concatenation of homotopies appearing in the diagram (\ref{ADD THETA EQ})
defines a homotopy
$$k\circ |H|\circ |Ev_{2}| \simeq  k\circ |H|\circ |Ev_{1}|+ k\circ |H| \circ |Ev_{12}|$$
This homotopy can be described more directly as follows. Given a short exact sequence of chain 
complexes 
$$0\to A\overset{f}{\lra} B \overset{g}{\lra} C\to 0 $$
consider the associated long exact sequence of homology groups 
$$\dots\  \overset{\delta_{q+1}}{\lra} H_{q}(A)\overset{f_{q}}{\lra} H_{q}(B) \overset{g_{q}}{\lra} H_{q}(C)
\overset{\delta_{q}}{\lra}  H_{q-1}(A) \overset{f_{q-1}}{\lra} \dots$$
Let $\ker f$  denote the chain complex with trivial differentials given by  
$$(\ker f)_{q}:= \ker(f_{q}\colon H_{q}(A)\to H_{q}(B))$$
and let $\ker g$, $\ker \delta$ be chain complexes defined in the analogous way.
Notice that we have a short exact sequence 
$$0\to \ker g \lra H_{\ast}(B)\lra \ker \delta \to 0$$
and so the homotopy $\mho$ defines a path in $K(\R)$ between points represented by the complexes
$H_{\ast}(B)$ and $\ker g\oplus \ker \delta$. By an analogous argument we obtain a path in 
$K(\R)$ joining the points represented by $H_{\ast}(A)\oplus H_{\ast}(C)$ and by the chain complex
$$(\ker f\oplus \ker g)\oplus (\ker \delta \oplus \ker f[-1])$$
where  $\ker f[-1]$ is the complex obtained by shifting grading in $\ker f$:
$$(\ker f[-1])_{q}:=(\ker f)_{q-1}$$
Using the additivity homotopy $\mho$ again one can obtain a canonical path in $K(\R)$ 
between the points represented by $\ker f\oplus \ker f[-1]$ and the zero chain complex. 
Combining it with the other paths described above we get a path in $K(\R)$:
$$\xymatrix{
& \ker g\oplus \ker \delta \ar@{-}[dd] & \\
H_{\ast}(B) \ar@{-}[ur] & &  H_{\ast}(A)\oplus H_{\ast}(C) \\
& \ker f\oplus \ker g\oplus \ker \delta \oplus \ker f[-1] \ar@{-}[ur] & \\
}
$$
Notice that all steps in the construction of this path are functorial, so in fact we obtain in this 
way a homotopy  
$$\mho^{H}\colon |\SS[2]\Chfd(\R)|\times I\to K(\R)$$ 
between the map $k\circ |H|\circ |Ev_{2}|$ and  $k\circ |H|\circ |Ev_{1}|+ k\circ |H| \circ |Ev_{12}|$. 
The homotopy $\mho^{H}$ extends the diagram (\ref{ADD THETA EQ}):
\begin{equation}
\label{ADD THETA2 EQ}
\xymatrix{
k\circ |Ev_{2}| \ar[rr]^(.4){\mho}\ar[d]_{\Theta^{2}}&  & 
k\circ |Ev_{1}|+ k\circ |Ev_{12}|\ar[d]^{\Theta^{1}+\Theta^{12}} \\
k\circ |H|\circ |Ev_{2}|\ar[rr]_(.4){\mho^{H}}& & 
k\circ |H|\circ |Ev_{1}|+ k\circ |H| \circ |Ev_{12}| \\
}
\end{equation}

\begin{lemma}
\label{ADD THETA LEMMA}
There exists a homotopy of homotopies which fills the dia\-gram (\ref{ADD THETA2 EQ}).
\end{lemma}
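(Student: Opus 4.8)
The statement asks us to fill the outer square of (\ref{ADD THETA2 EQ}), whose horizontal edges are the pre-additivity homotopies $\mho$ and $\mho^{H}$ and whose vertical edges are the homology homotopies $\Theta^{2}$ and $\Theta^{1}+\Theta^{12}$, by a map $|\SS[2]\Chfd(\R)|\times I\times I\to K(\R)$. The plan is to recognise every edge of the square as a concatenation of instances of the single homotopy $\mho$ of Theorem (\ref{WAL ADD THM}), and then to build the filling out of the coherence of $\mho$ — the fact that running $\mho$ along a flag of cofibrations in $\Chfd(\R)$ is, up to a canonical homotopy of homotopies, independent of the order in which the successive subquotients are split off. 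This coherence is part of Waldhausen's additivity theorem \cite{Wal} and will be used repeatedly, also in its $\SS[3]$-form.

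First I would pass to a connected component of $|\SS[2]\Chfd(\R)|$, on which a short exact sequence $S=(0\to A\overset{f}{\lra}B\overset{g}{\lra}C\to 0)$ has all of $A,B,C$ of bounded homological dimension $\le q$ for a fixed $q$, so (as in the proof of Lemma (\ref{HOMOLOGY LEMMA})) one has $P_{q}A\simeq A$, $P_{q}B\simeq B$, $P_{q}C\simeq C$ there and the homotopies $\Theta^{1},\Theta^{2},\Theta^{12}$ are given by the $q$-th filtration recipe. Next I would introduce the flag of $B$, functorial in $S$,
$$P_{0}A\subseteq\cdots\subseteq P_{q}A=A\subseteq g^{-1}(P_{0}C)\subseteq\cdots\subseteq g^{-1}(P_{q}C)=B,$$
whose successive subquotients are canonically quasi-isomorphic to the complexes $H_{i}(A)[i]$ and $H_{j}(C)[j]$. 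Running $\mho$ along this master flag produces a homotopy from $k\circ|Ev_{2}|$ to $k\circ|H|\circ|Ev_{1}|+k\circ|H|\circ|Ev_{12}|$. Since $\mho$ applied to the two-step flag $A\subseteq B$ is exactly the top edge $\mho$ of (\ref{ADD THETA2 EQ}), and its refinements along $P_{\bullet}A$ and along the induced flag $P_{\bullet}C$ of $B/A$ are exactly $\Theta^{1}$ and $\Theta^{12}$, the coherence of $\mho$ yields a homotopy of homotopies between the right-then-down composite $(\Theta^{1}+\Theta^{12})\circ\mho$ and the master-flag homotopy.

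It then remains to relate the master-flag homotopy to the left-then-bottom composite $\mho^{H}\circ\Theta^{2}$. Here $\Theta^{2}$ is $\mho$ run along the other flag $P_{0}B\subseteq\cdots\subseteq P_{q}B=B$, whose subquotients are the $H_{i}(B)[i]$, while $\mho^{H}$ is, by its construction above, a further concatenation of instances of $\mho$ among the homology complexes, organised by the long exact sequence of $S$ (the subquotients $\ker f_{i}$, $\ker g_{i}$, $\ker\delta_{i}$ and the canonical nullhomotopies of $\ker f_{i}\oplus\ker f_{i}[-1]$). The two flags of $B$ have no common refinement, so this comparison cannot be made in a single application of additivity; instead I would break it up one homological degree at a time, filling one elementary square per degree by an instance of $\mho$ for $\SS[3]\Chfd(\R)$ (equivalently, by naturality of $\mho$ applied to the evident two-step cofibration sequences built from $f_{i},g_{i},\delta_{i+1}$), and then assemble these, together with the cancellation homotopies for $\ker f_{i}\oplus\ker f_{i}[-1]$, into the required homotopy of homotopies. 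Transitivity of the three comparisons then fills (\ref{ADD THETA2 EQ}).

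The main obstacle is this last step: one must choose the degreewise pieces and their gluings so that their totality reproduces, on one side, $\Theta^{2}$ followed by $\mho^{H}$ and, on the other, the master-flag homotopy, which forces one to keep careful track of the grading shifts $[-1]$ and of the precise way Waldhausen additivity iterates. No ingredient beyond Theorem (\ref{WAL ADD THM}) and its coherence is needed; the argument is in the end a (rather lengthy) diagram chase inside the $K$-theory space assembled from repeated use of the pre-additivity homotopy.
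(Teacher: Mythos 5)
The paper does not actually prove this lemma: it states only that ``a proof of the lemma can be obtained by a fairly straightforward (although tedious) construction of the required homotopy of homotopies,'' with no further detail. So there is no argument in the paper to compare yours against; the question is whether your sketch is a viable way of supplying the missing construction. Your triangulation strategy --- introduce a ``master-flag'' homotopy $\Xi$ by running $\mho$ along a single long flag of $B$ whose subquotients are the $H_i(A)[i]$ followed by the $H_j(C)[j]$, and compare each composite edge of (\ref{ADD THETA2 EQ}) to $\Xi$ --- is a sensible plan and plausibly close to what the authors had in mind. One notational correction: since $P_q C$ is a \emph{quotient} of $C$, the subcomplexes you want are the kernels $R_q C:=\ker(C\to P_{q-1}C)$, whose successive quotients recover the $Q_q C\simeq H_q(C)[q]$; your flag $P_0A\subseteq\cdots\subseteq A\subseteq g^{-1}(P_0C)\subseteq\cdots$ should be rewritten in terms of $R_\bullet A$ and $g^{-1}(R_\bullet C)$, and then it does make literal sense.

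The comparison $(\Theta^{1}+\Theta^{12})\circ\mho\simeq\Xi$ is indeed a straightforward consequence of coherence of $\mho$ under refinement of the two-step flag $A\subseteq B$, since the part of the master flag above $A$ maps under $g$ onto the $R$-flag of $C$ and the part below $A$ is the $R$-flag of $A$. The real burden, which you correctly identify, is the comparison $\mho^{H}\circ\Theta^{2}\simeq\Xi$, and here your sketch stops short of a proof. The $R$-flag of $B$ and the master flag of $B$ share no common refinement as filtrations of $B$: one does have $R_iA=R_iB\cap A$, but $R_iB/R_iA\neq R_iC$ in general, the discrepancy being exactly the connecting map $\delta$. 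So the comparison must pass through a further refinement of both flags --- splitting each $H_i(B)[i]$ along $0\to\ker g_i\to H_iB\to\ker\delta_i\to 0$, splitting each $H_i(A)[i]$ and $H_j(C)[j]$ along the $\ker f_i,\ker g_i,\ker\delta_j,\ker f_{j-1}$ pieces of the long exact sequence, and cancelling the surplus pairs $\ker f_i$ in adjacent degrees. You name these ingredients, but you do not actually assemble them: you do not verify that the per-degree squares, the $\SS[3]$-coherences, and the cone nullhomotopies for $\ker f_i\oplus\ker f_i[-1]$ glue into a single homotopy of homotopies, nor that the grading shift in $\ker f[-1]$ meshes with the degree offset produced by the flag refinement --- which is precisely the point at which a sign or indexing error would make the construction fail. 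In short, you have built more scaffolding than the paper provides, the approach looks correct, and you have located the difficulty accurately, but the central ``tedious'' step remains a sketch rather than a proof.
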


\noindent A proof of the lemma can be obtained by a fairly straighforward (although tedious) 
construction of the required homotopy of homotopies.

\begin{nn}
\label{HOMOLOG ADD NN}
Let $\DD$ be a small category and let 
$$F\colon \DD\to \SS[2]\Chfd(\R)$$
be a functor which associates to $d\in \DD$ a short exact sequence 
$$F(d)=(0\to A(d)\to B(d)\to C(d)\to 0)$$
As before we identify here $\SS[2]\Chfd(\R)$ with the category of short exact sequences in $\Chfd(\R)$.
We have 
$$Ev_{1}\circ F=A, \ \ Ev_{2}\circ F=B, \ \  Ev_{12}\circ F=C$$
In this case from (\ref{ADD THETA2 EQ}) we obtain a diagram
\begin{equation}
\label{ADD THETA ABC EQ}
\xymatrix{
k\circ |B| \ar[rr]^(.4){\mho}\ar[d]_{\Theta^{2}}&  & 
k\circ |A|+ k\circ |C|\ar[d]^{\Theta^{1}+\Theta^{12}} \\
k\circ |H|\circ |B|\ar[rr]_(.4){\mho^{H}}& & 
k\circ |H|\circ |A|+ k\circ |H| \circ |C| \\
}
\end{equation}
Vertices of this diagram represent a maps $|\DD|\to K(\R)$ and edges give homotopies of such maps.
Moreover, Lemma \ref{ADD THETA LEMMA} shows this diagram can be filled by a homotopy 
of homotopies. We apply this observation in the proof of Theorem  \ref{ADD THM} as follows.  
Let $p\colon E\to B$ be a bundle with a unipotent splitting as in the statement (\ref{ADD THM}). 
Take $\DD:=\simp$ and let $F\colon \simp\to \SS[2]\Chfd(\R)$
be the functor which assigns to $\sigma\in \simp$ the short exact sequence (\ref{ADD SES EQ}).
Then the diagram (\ref{ADD THETA ABC EQ}) is the top square of the diagram (\ref{ADD HOM HOM 3 EQ}).
\end{nn}

\vskip 3mm
\subsection{Tensor products}
Let  $T\in \Chfd(\R)$ be a chain complex with trivial differentials. Consider the functor
$$\otimes T\colon \Chfd(\R)\to \Chfd(\R)$$
which maps a chain complex $C$ to $C\otimes T$. This is an exact functor, so it induces 
a map  
$$K(\otimes T)\colon K(\R)\to K(\R)$$
Restricting the functor $\otimes T$ to the category $\SS[1]\Chfd(\R)$ and then passing to 
the nerve we also get a map 
$$|\otimes T|\colon |\SS[1]\Chfd(\R)|\to |\SS[1]\Chfd(\R)|$$
Notice that 
$$k\circ |\otimes T| =K(\otimes T)\circ k \ \ \text{and} \ \   k\circ |H| \circ |\otimes T|= K(\otimes T)\circ k \circ |H|$$
As a consequence the homotopy $K(\otimes T)\circ k\circ |H|\simeq K(\otimes T)\circ k$ can be obtained in two 
different ways:
\begin{itemize}
\item[--] using the map $K(\otimes T)\circ \Theta$, or
\item[--] using the map $\Theta \circ (|\otimes T|\times \id_{I})$
\end{itemize}
where $\Theta$ is the homotopy given by Lemma \ref{HOMOLOGY LEMMA}.

\begin{lemma}
\label{TENSOR LEMMA}
There exists a homotopy of homotopies between $K(\otimes T)\circ \Theta$ and  $\Theta \circ (|\otimes T|\times \id_{I})$.
\end{lemma}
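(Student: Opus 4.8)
The plan is to express both homotopies as iterated applications of Waldhausen's pre-additivity homotopy $\mho$ (\ref{WAL ADD THM}) and then to compare the two resulting ``peeling schemes'' using the higher coherence of $\mho$.

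Recall from the proof of Lemma \ref{HOMOLOGY LEMMA} that $\Theta$ is built from the functorial Postnikov tower $P_{0}C\subseteq P_{1}C\subseteq\cdots$ (with $P_{q}C\simeq C$ for $q\gg 0$), the functorial cofibration sequences $Q_{q}C\to P_{q}C\to P_{q-1}C$, and the functorial quasi-isomorphisms $Q_{q}C\simeq H_{q}(C)[q]$ (the complex with $H_{q}(C)$ in degree $q$): one applies $\mho$ to each successive sequence and combines with these quasi-isomorphisms. Since $\otimes T$ is exact it commutes with the $\SSc$-construction, hence it carries the homotopy $\mho$ of a cofibration sequence to the homotopy $\mho$ of its image; consequently $K(\otimes T)\circ \Theta$ is, up to reparametrisation, the homotopy produced by this same recipe applied to the filtration $A_{q}:=(P_{q}C)\otimes T$ of $C\otimes T$, whose successive quotients are $(Q_{q}C)\otimes T\simeq H_{q}(C)[q]\otimes T$. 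On the other hand, since $T$ has trivial differential and is homotopy finitely dominated it is a finite direct sum of its terms $T_{j}$, each regarded as a complex concentrated in degree $j$; a direct computation then gives $P_{q'}(C\otimes T)=\bigoplus_{j}(P_{q'-j}C)\otimes T_{j}$, so that $\Theta\circ(|\otimes T|\times\id_{I})$ is the homotopy produced by the recipe applied to the Postnikov filtration $B_{q'}:=P_{q'}(C\otimes T)$ of $C\otimes T$, with successive quotients $Q_{q'}(C\otimes T)\simeq H_{q'}(C\otimes T)[q']$. Thus the lemma reduces to comparing the iterated-$\mho$ homotopies attached to the two functorial, exhausting filtrations $A_{\bullet}$ and $B_{\bullet}$ of $C\otimes T$.

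I would carry this out via a common refinement. After passing to homology, the successive quotients of either filtration split as direct sums of the ``atoms'' $H_{q}(C)\otimes T_{j}$ placed in degree $q+j$, indexed by pairs $(q,j)$: for $A_{\bullet}$ the $q$-th quotient collects the atoms with that fixed $q$, whereas for $B_{\bullet}$ the $q'$-th quotient collects those with $q+j=q'$. Refining each of $A_{\bullet}$, $B_{\bullet}$ (functorially in $C$) to a filtration whose successive quotients are single atoms, I would invoke two coherence properties of $\mho$: the iterated-$\mho$ homotopy of a filtration is joined to that of any refinement by a canonical homotopy of homotopies, and two iterated-$\mho$ homotopies built from filtrations having the same atoms listed in different orders are joined by a homotopy of homotopies. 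Concatenating the resulting links --- from $K(\otimes T)\circ \Theta$ through the refinement of $A_{\bullet}$, then a fixed atom-by-atom peeling, then the refinement of $B_{\bullet}$, to $\Theta\circ(|\otimes T|\times\id_{I})$ --- produces the required homotopy of homotopies (in the sense of \ref{NOTATION NN}). One must also check, as in Lemma \ref{HOMOLOGY LEMMA}, that on each connected component of $|\SS[1]\Chfd(\R)|$ the towers stabilise, so that only finitely many steps occur and the whole construction is defined componentwise.

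The main obstacle is exactly this pair of coherence properties of $\mho$: that refining a filtration, and that permuting the atoms of a split filtration, each alter the iterated pre-additivity homotopy only up to a canonical homotopy of homotopies. This is an elementary but tedious verification of the same nature as Lemma \ref{ADD THETA LEMMA}, carried out one categorical degree higher; alternatively it can be deduced from the naturality of Waldhausen's additivity equivalence together with the permutative (hence $E_{\infty}$) structure on $K(\R)$.
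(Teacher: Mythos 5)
Your opening identification of the two homotopies with iterated $\mho$-homotopies attached to the filtrations $A_q=(P_qC)\otimes T$ and $B_{q'}=P_{q'}(C\otimes T)$ of $C\otimes T$ is exactly right and matches the first paragraph of the paper's proof. Where you diverge is in how the direct sum decomposition $T=\bigoplus_j T_j$ (with $T_j$ concentrated in degree $j$) is used. You deploy it only to compute $B_{q'}=\bigoplus_j(P_{q'-j}C)\otimes T_j$ and then compare $A_\bullet$ and $B_\bullet$ for general $T$ by passing to a common atom-by-atom refinement; this forces you to prove that the iterated pre-additivity homotopy is invariant, up to a canonical homotopy of homotopies, under refinement and under permuting the ``atoms'' of a filtration. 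You correctly flag this as the main obstacle, and although these coherences are certainly believable (higher additivity / $E_\infty$ structure), they are not carried out and are genuinely more than the usual elementary check.

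The paper sidesteps this entirely by decomposing $T$ \emph{before} comparing filtrations. If $T$ is concentrated in a single degree $n$, then $\otimes T$ is an exact functor that simply shifts grading by $n$, so $P_{q}(C)\otimes T = P_{q+n}(C\otimes T)$ and $Q_q(C)\otimes T = Q_{q+n}(C\otimes T)$ on the nose; the two families of short exact sequences are literally isomorphic via a degree shift, and the homotopy of homotopies is immediate from that isomorphism — no refinement and no permutation of atoms is needed, since the filtrations already agree. The general case then follows by summing over $j$, using that $\otimes T = \bigoplus_j\otimes T_j$ and that both $\Theta$ and $K(\otimes -)$ are additive in the coproduct. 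So the paper's route converts the hard permutation coherence you face into a comparatively soft ``$\Theta$ of a direct sum is the sum of $\Theta$'s'' statement. Your argument, if the refinement/permutation coherences of $\mho$ were supplied, would prove something slightly stronger and more reusable, but for this lemma the single-degree reduction is the more economical move and is worth knowing.
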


\begin{proof}
Let $C\in \Chfd(\R)$. The homotopy $K(\otimes T)\circ \Theta$ is obtained 
applying Waldhausen's pre-additivity theorem 
to short exact sequences
$$(Q_{q}C)\otimes T\to (P_{q}C)\otimes T \to (P_{q-1}C)\otimes T$$
while the homotopy $\Theta\circ (|T|\times \id_{I})$ comes from the pre-additivity theorem 
applied to short exact sequences
$$Q_{q}(C\otimes T)\to (P_{q}C \otimes T) \to P_{q-1}(C\otimes T)$$ 

Assume that the chain complex $T$ is non-zero in only one grading $n$. 
In this case the homotopy of homotopies
between $K(\otimes T)\circ \Theta$ and $\Theta\circ (|T|\times \id_{I})$ comes from isomorphisms of short 
exact sequences 
$$
\xymatrix{
(Q_{q}C)\otimes T\ar[r] \ar[d]_{\cong} & (P_{q}C)\otimes T \ar[r]\ar[d]_{\cong} 
& (P_{q-1}C)\otimes T\ar[d]^{\cong} \\
Q_{q+n}(C\otimes T)\ar[r] & (P_{q+n}C \otimes T) \ar[r] & P_{q+n-1}(C\otimes T) \\
}
$$
If $T$ is an arbitrary complex with trivial differentials then $T$ is a direct sum of complexes concentrated in 
a single grading. Using this observation we can reduce the statement of the lemma 
to the special case considered above. 
\end{proof}

\begin{nn}
\label{TOP SQ PROP}
Let $\DD$ be a small category and let 
$$F, G\colon \DD \to \SS[1]\Chfd(\R)$$
be functors. Assume that for a some complex $T\in \Chfd(\R)$ with trivial differentials 
we have a natural transformation of functors 
$$\alpha\colon F\Rightarrow \otimes T\circ G$$
This induces a homotopy $|\alpha|\colon |\DD|\times I \to |\SS[1]\Chfd(\R)|$ between the maps 
$|F|$ and $|\otimes T\circ G|$. Consider the diagram 
\begin{equation*}
\xymatrix{
k\circ |F| \ar[rr]^(.45){k\circ |\alpha|} \ar[d]_{\Theta\circ (|F|\times \id_{I})}& 
&k\circ  |\otimes T\circ G|\ar@{=}[r] \ar[d]_{\Theta\circ (|\otimes T\circ G|\times \id_{I})} & 
K(\otimes T)\circ k\circ |G| \ar[d]_{K(\otimes T)\circ \Theta\circ (|G|\times \id_{I})}\\
k\circ |H|\circ |F| \ar[rr]_(.45){k\circ |H(\alpha)|} & 
& k\circ |H|\circ |\otimes T\circ G| \ar@{=}[r] & K(\otimes T)\circ k\circ |H|\circ |G|\\
}
\end{equation*}
Each vertex of the diagram represents a map $|\DD|\to K(\R)$, and each edge stands for a homotopy 
of such maps. We claim that there exists a homotopy of homotopies between the concatenation of 
$k\circ|\alpha|$ with $K(\otimes T)\circ \Theta\circ (|G|\times \id_{I})$ and concatenation of 
$\Theta\circ (|F|\times \id_{I})$ with $k\circ |H(\alpha)|$. Indeed, directly from the construction of the homotopy 
$\Theta$ one can see that the left square in the diagram can be filled by a homotopy of homotopies. 
Also,  by Lemma \ref{TENSOR LEMMA} we have a homotopy of homotopies filling the right square.

We can apply this observation in the context of the proof of Theorem \ref{SEC TRANSFER THM} as follows. 
Let $p\colon E\to B$ be a smooth bundle and let  $q\colon D\to E$ be a Leray-Hirsch bundle with fiber
$F_{q}$. Take $\DD:=\simp$. Let  $F:=C_{pq}$ and $G:= C_{p}$ be the chain complex functors 
defined in (\ref{LIN NN}). Also, let $T:= H_{\ast}(F_{q})$. We have a natural 
transformation 
$$\alpha\colon C_{pq}\Rightarrow \otimes H_{\ast}(F_{q})\circ C_{p}$$
given by the Leray-Hirsch quasi-isomorphism. Consider the diagram (\ref{SEC HOMOT DIAG}). 
The homotopy $\mu_{q}\circ (p^{!}\times \id_{I})$ in that diagram coincides with $k\circ |\alpha|$
while the homotopies $\omega^{(1)}_{pq}$ and $\omega^{(1)}_{p}\otimes H_{\ast}(F_{q})$ can be identified
with, respectively, $\Theta\circ (|F|\times \id_{I})$ and $K(\otimes T)\circ \Theta\circ (|G|\times \id_{I})$. 
Finally, the homotopy $\alpha(q)$ in (\ref{SEC HOMOT DIAG}) is the same as $k\circ |H(\alpha)|$.
As a consequence the homotopy of homotopies described above fills the top square in the diagram 
(\ref{SEC HOMOT DIAG}) in the proof of Theorem \ref{SEC TRANSFER THM}.

\end{nn}

\bibliographystyle{plain}
\bibliography{axioms}

\end{document}